\renewcommand\eqref[1]{(\ref{#1})} %Need with hyperref
\numberwithin{equation}{section}
\theoremstyle{plain}
\newtheorem{thm}{Theorem}[section]
\newtheorem{prop}[thm]{Proposition}
\newtheorem{cor}[thm]{Corollary}
\newtheorem{lem}[thm]{Lemma}
\theoremstyle{definition}
\newtheorem{rem}[thm]{Remark}
\renewcommand{\wp}{\mathfrak S}
\newcommand{\Rn}{\mathbb R^{n}}
\begin{document}
   \title[Hardy inequalities for magnetic operators]
   {Hardy inequalities for Landau Hamiltonian and for Baouendi-Grushin operator with Aharonov-Bohm type magnetic field. Part I.}

\author[A. Laptev]{Ari Laptev}
\address{
  Ari Laptev:
 \endgraf
  Department of Mathematics
  \endgraf
  Imperial College London
  \endgraf
  180 Queen's Gate, London SW7 2AZ
  \endgraf
  United Kingdom
  \endgraf
  and
  \endgraf
  St. Petersburg State University
  \endgraf
  St. Petersburg, Russia
  \endgraf
  {\it E-mail address} {\rm a.laptev@imperial.ac.uk}
  }

\author[M. Ruzhansky]{Michael Ruzhansky}
\address{
  Michael Ruzhansky:
  \endgraf
  Department of Mathematics
  \endgraf
  Imperial College London
  \endgraf
  180 Queen's Gate, London SW7 2AZ
  \endgraf
  United Kingdom
  \endgraf
  {\it E-mail address} {\rm m.ruzhansky@imperial.ac.uk}
  }

\author[N. Yessirkegenov]{Nurgissa Yessirkegenov}
\address{
  Nurgissa Yessirkegenov:
  \endgraf
  Institute of Mathematics and Mathematical Modelling
  \endgraf
  125 Pushkin str.
  \endgraf
  050010 Almaty
  \endgraf
  Kazakhstan
  \endgraf
  and
  \endgraf
  Department of Mathematics
  \endgraf
  Imperial College London
  \endgraf
  180 Queen's Gate, London SW7 2AZ
  \endgraf
  United Kingdom
  \endgraf
  {\it E-mail address} {\rm n.yessirkegenov15@imperial.ac.uk}
  }

\thanks{AL was supported by the RSF Grant No. 15-11-30007. MR was supported in parts by the EPSRC
 grant EP/K039407/1 and by the Leverhulme Grant RPG-2014-02. NY was supported by the MESRK grant 0825/GF4. No new data was collected or
generated during the course of research.}

     \keywords{Hardy inequalities, Baouendi-Grushin operator, Aharonov-Bohm magnetic field, twisted Laplacian, Landau-Hamiltonian}
     \subjclass[2010]{26D10, 35P15}

     \begin{abstract}
In this paper we prove the Hardy inequalities for the quadratic form of the Laplacian with the Landau Hamiltonian magnetic field. Moreover, we obtain Poincar\'e type inequality and inequalities with more general families of weights, all with estimates for the remainder terms of these inequalities.
Furthermore, we establish
weighted Hardy inequalities for the quadratic form of the magnetic Baouendi-Grushin operator for the magnetic field of Aharonov-Bohm type. For these, we show refinements of the known Hardy inequalities for the Baouendi-Grushin operator involving radial derivatives in some of the variables.
The corresponding uncertainty type principles are also obtained.
     \end{abstract}
     \maketitle

\section{Introduction}

The purpose of this paper is to prove the weighted Hardy inequality for the quadratic form of the Landau Hamiltonian and for the magnetic Baouendi-Grushin operator with Aharonov-Bohm type magnetic field. In Part II of this paper we investigate and present the corresponding Caffarelli-Kohn-Nirenberg inequalities for the Landau Hamiltonian and for the Baouendi-Grushin operator, with and without magnetic fields.

The classical Hardy inequality for functions $f\in C_{0}^{\infty}(\Rn\backslash0)$ is
\begin{equation}\label{Intro_1}
\int_{\Rn}|\nabla f(w)|^{2}dw\geq \left(\frac{n-2}{2}\right)^{2}\int_{\Rn}\frac{|f(w)|^{2}}{|w|^{2}}dw, \quad n\geq3,
\end{equation}
where the constant $\left(\frac{n-2}{2}\right)^{2}$ is sharp but not attained. There exists a large literature concerning different versions of Hardy's inequalities and their applications. However, since we are interested in the inequalities associated with the Landau Hamiltonian and with the Baouendi-Grushin operator, let us only recall known results in these directions.

\subsection{Baouendi-Grushin operator}
\label{SEC:iGR}

The Hardy inequality \eqref{Intro_1} has been generalised for Baouendi-Grushin vector fields by Garofalo \cite{G},
\begin{equation}\label{Intro_2}
\int_{\Rn}(|\nabla_{x}f|^{2}+|x|^{2\gamma}|\nabla_{y}f|^{2})dxdy\geq \left(\frac{Q-2}{2}\right)^{2}
\int_{\Rn}\left(\frac{|x|^{2\gamma}}{|x|^{2+2\gamma}+(1+\gamma)^{2}|y|^{2}}\right)|f|^{2}dxdy,
\end{equation}
where $x\in\mathbb{R}^{m}$, $y\in\mathbb{R}^{k}$ with $n=m+k$, $m,k\geq1$, $\gamma\geq0$, $Q=m+(1+\gamma)k$ and
$f\in C_{0}^{\infty}(\mathbb{R}^{m}\times\mathbb{R}^{k}\backslash\{(0,0)\})$. Here, $\nabla_{x}f$ and $\nabla_{y}f$ are the gradients of $f$ in the variables $x$ and $y$, respectively. The inequality \eqref{Intro_2} recovers \eqref{Intro_1} when $\gamma=0$.

Let us put this result in perspective.
Let $z=(x_{1},...,x_{m}, y_{1},...,y_{k})=(x,y)\in \mathbb{R}^{m}\times\mathbb{R}^{k}$ with $k,m\geq1$, $k+m=n$ and $\gamma\geq0$. Let us consider the vector fields
$$X_{i}=\frac{\partial}{\partial x_{i}}, \;i=1,...,m, \;\;\; Y_{j}=|x|^{\gamma}\frac{\partial}{\partial y_{j}}, \;j=1,...,k.$$
The corresponding sub-elliptic gradient, which is the $n$ dimensional vector field, is then defined as
\begin{equation}\label{subgrad}
\nabla_{\gamma}:=(X_{1},...,X_{m}, Y_{1},...,Y_{k})=(\nabla_{x}, |x|^{\gamma}\nabla_{y}).
\end{equation}
The Baouendi-Grushin operator on $\mathbb{R}^{m+k}$ is defined by
\begin{equation}\label{Grush_op}
\triangle_{\gamma}=\sum_{i=1}^{m}X_{i}^{2}+\sum_{j=1}^{k}Y_{j}^{2}=\triangle_{x}+|x|^{2\gamma}\triangle_{y}=\nabla_{\gamma}\cdot \nabla_{\gamma},
\end{equation}
where $\triangle_{x}$ and $\triangle_{y}$ are the Laplace operators in the variables $x\in \mathbb{R}^{m}$ and $y\in \mathbb{R}^{k}$, respectively.  The Baouendi-Grushin operator for an even positive integer $\gamma$ is a sum of squares of $C^{\infty}$ vector fields satisfying
H\"{o}rmander condition
$${\rm rank} \;{\rm Lie} [X_{1},...,X_{m}, Y_{1},...,Y_{k}]=n.$$
We can define on $\mathbb{R}^{m+k}$ the anisotropic dilation attached to $\triangle_{\gamma}$ as
$$\delta_{\lambda}(x,y)=(\lambda x, \lambda^{1+\gamma} y)$$
for $\lambda>0$, and the homogeneous dimension with respect to this dilation is
\begin{equation}\label{hom_dim}
Q=m+(1+\gamma)k.
\end{equation}
A change of variables formula for the Lebesgue measure implies that
$$d\circ\delta_{\lambda}(x,y)=\lambda^{Q}dxdy.$$
It is easy to check that
$$X_{i}(\delta_{\lambda})=\lambda\delta_{\lambda}(X_{i}),\quad Y_{i}(\delta_{\lambda})=\lambda\delta_{\lambda}(Y_{i}),$$
and hence
$$\nabla_{\gamma}\circ \delta_{\lambda}=\lambda\delta_{\lambda} \nabla_{\gamma}.$$
Let $\rho(z)$ be the corresponding distance function from the origin for $z=(x,y)\in \mathbb{R}^{m}\times \mathbb{R}^{k}$:
\begin{equation}\label{dist}
\rho=\rho(z):=(|x|^{2(1+\gamma)}+(1+\gamma)^{2}|y|^{2})^{\frac{1}{2(1+\gamma)}}.
\end{equation}
By a direct calculation one obtains
\begin{equation}\label{formula}|\nabla_{\gamma}\rho|=\frac{|x|^{\gamma}}{\rho^{\gamma}}.
\end{equation}
The described setup may be thought of as a special case of the setting of homogeneous groups, see e.g. \cite{FR}.

The weighted $L^{p}$-versions of \eqref{Intro_2} have been obtained  by D'Ambrosio \cite{A2}:
Let $\Omega\subset\Rn$ be an open set. Let $p>1$, $k,m\geq1$, $\alpha, \beta\in\mathbb{R}$ be such that $m+(1+\gamma)k>\alpha-\beta$ and $m>\gamma p-\beta$. Then for every $f\in D_{\gamma}^{1,p}(\Omega, |x|^{\beta-\gamma p}
\rho^{(1+\gamma)p-\alpha})$ we have
\begin{equation}\label{Amb1}
\int_{\Omega}|\nabla_{\gamma}f|^{p}|x|^{\beta-\gamma p}\rho^{(1+\gamma)p-\alpha}dxdy\geq\left(\frac{Q+\beta-\alpha}{p}\right)^{p}\int_{\Omega}|f|^{p}\frac{|x|^{\beta}}{\rho^{\alpha}}dxdy,
\end{equation}
where $D_{\gamma}^{1,p}(\Omega,\omega)$ denotes the closure of $C_{0}^{\infty}(\Omega)$ in the norm $\left(\int_{\Omega}|\nabla_{\gamma}f|^{p}\omega dzdy\right)^{1/p}$ for $\omega\in L^{1}_{loc}(\Omega)$ with $\omega>0$ a.e. on $\Omega$.

If $0\in\Omega$, then the constant $\left(\frac{Q+\beta-\alpha}{p}\right)^{p}$ in \eqref{Amb1} is sharp.
The inequality \eqref{Amb1} has also been established in \cite{Kombe15}, and in \cite{ShY12} for $\Omega=\Rn$ with sharp constant. Moreover, in \cite{ShY12}, a Hardy-Rellich type inequality for the Baouendi-Grushin operator is obtained in $L^{2}$ with sharp constant:
$$\left(\frac{Q-\alpha-2}{2}\right)^{2}\int_{\Rn}|\nabla_{\gamma}f|^{2}
\rho^{\alpha}\leq\int_{\Rn}|\triangle_{\gamma}f|^{2}\rho^{\alpha+2}|\nabla_{\gamma}\rho|^{-2},$$
where $p>1$, $\frac{2-Q}{3}\leq \alpha \leq Q-2$, $f\in C^{\infty}_{0}(\Rn\backslash\{0\})$.

The inequalities of this type have been also studied for some sub-elliptic operators of different types (see e.g. \cite{G}, \cite{GL}, \cite{A1}, \cite{A2}, \cite{NCH}, \cite{K} and \cite{DGN}).
For Hardy and Caffarelli-Kohn-Nirenberg inequalities on more general homogeneous Carnot groups and the literature review including the Heisenberg group we refer to
\cite{Ruzhansky-Suragan:JDE,Ruzhansky-Suragan:Layers}, for the anisotropic versions of the usual $L^2$ and $L^p$ Cafarelli-Kohn-Nirenberg inequalities we refer to \cite{Ruzhansky-Suragan:L2-CKN} and \cite{ORS16}, respectively, and for Hardy inequalities for more general sums of squares of vector fields we refer to \cite{Ruzhansky-Suragan:squares}.

Here we obtain the following refinement of Hardy inequalities for the Baouendi-Grushin operator:

\begin{itemize}
\item ({\bf Weighted refined Hardy inequalities for Grushin operators}) Let $(x,y)=(x_{1},...,x_{m}, y_{1},...,y_{k})\in \mathbb{R}^{m}\times\mathbb{R}^{k}$ with $k,m\geq1$, $k+m=n$. Let $Q+\alpha_{1}-2>0$ and $m+\gamma \alpha_{2}>0$. Then we have the following Hardy type inequality for all complex-valued functions $f\in C_{0}^{\infty}(\Rn\backslash\{0\})$
\begin{multline}\label{EQ:rad}
\qquad \quad \int_{\Rn}\rho^{\alpha_{1}}|\nabla_{\gamma}\rho|^{\alpha_{2}}\left(
\left|\frac{d}{d|x|}f\right|^{2}+|x|^{2\gamma}|\nabla_{y}f|^{2}\right)dxdy \\
\geq \left(\frac{Q+\alpha_{1}-2}{2}\right)^{2}
\int_{\Rn}\rho^{\alpha_{1}}|\nabla_{\gamma}\rho|^{\alpha_{2}}\frac{|\nabla_{\gamma}\rho|^{2}}{\rho^{2}}|f|^{2}dxdy,
\end{multline}
where the constant $\left(\frac{Q+\alpha_{1}-2}{2}\right)^{2}$ is sharp.
\end{itemize}
Already in the absence of weights, i.e. for $\alpha_1=\alpha_2=0$, the estimate \eqref{EQ:rad} is new. Moreover, it is also new as a family of inequalities in the case $\gamma=0$: for $k=0$ it reduces to the classical Hardy inequality \eqref{Intro_1}, while for $m=0$ it reduces to the radial version established in  \cite{IIO:Lp-Hardy}, see also \cite{MOW:Hardy-Hayashi} (always for $\gamma=0$). We note that since we can estimate
$\left|\frac{d}{d|x|}f\right|\leq |\nabla_x f|$, inequality \eqref{EQ:rad} also gives a refinement to the inequality \eqref{Amb1} for $p=2$.

The estimate \eqref{EQ:rad}, in addition to its own interest, will play an important role in the derivation of estimates for magnetic operators.

\subsection{Magnetic Baouendi-Grushin operator}
\label{SEC:iGRm}

In \cite{LW} and \cite{AL11}, Hardy inequalities for some magnetic forms were obtained. For example, in \cite{AL11} for the quadratic form of the following magnetic Grushin operator
$$G_{\mathcal{A}}=-(\nabla_{G}+i\beta\mathcal{A}_0)^{2},$$
the following Hardy inequality for $-\frac{1}{2}\leq \beta \leq \frac{1}{2}$ was proved:
\begin{equation}\label{EQ:AL}
 \int_{\mathbb{R}^{3}}|(\nabla_{G}+i\beta\mathcal{A}_0)f|^{2}dzdt\geq (1+\beta^{2})\int_{\mathbb{R}^{3}}\frac{|z|^{2}}{d^{4}}|f|^{2}dzdt,
\end{equation}
where
$$\mathcal{A}_0=(\mathcal{A}_{1},\mathcal{A}_{2},\mathcal{A}_{3},\mathcal{A}_{4})=\left(-\frac{\partial_{y}d}{d},
\frac{\partial_{x}d}{d}, -2y\frac{\partial_{t}d}{d}, 2x \frac{\partial_{t}d}{d}\right),$$
$\nabla_{G}=(\partial_{x}, \partial_{y}, 2x\partial_{t}, 2y\partial_{t})$ with $z=(x,y)$, $|z|=\sqrt{x^{2}+y^{2}}$, $\beta\in\mathbb{R}$ is a \enquote{flux} and
$d(z,t)=(|z|^{4}+t^{2})^{\frac{1}{4}}$ is the Kaplan distance.

The following results extend the estimate \eqref{EQ:AL}.
While the most physical setting is $y\in \mathbb R^1$, we can obtain the results for any $y\in\mathbb R^k$.

\begin{itemize}
\item ({\bf Hardy inequality for the magnetic Baouendi-Grushin operator})
Let $(x,y)=(x_{1}, x_{2}, y)\in \mathbb{R}^{2}\times\mathbb{R}^{k}$. Let $\alpha_{1},\alpha_{2}, \beta\in\mathbb{R}$ be such that $\alpha_{1}+k(\gamma+1)>0$ and $\alpha_{2}\gamma+2>0$.
Let us define the
Aharonov-Bohm type magnetic field
\begin{equation}\label{another_mag2}
\mathcal{\widetilde{A}}:=
\left(-\frac{\partial_{x_{2}} \rho}{\rho}, \frac{\partial_{x_{1}} \rho}{\rho},-\frac{|x|^{\gamma}}{\sqrt{2}}
\frac{\nabla_{y} \rho}{\rho}, \frac{|x|^{\gamma}}{\sqrt{2}}
\frac{\nabla_{y} \rho}{\rho}\right),
\end{equation}
and the corresponding gradient
\begin{equation}\label{another_mag3}
\widetilde{\nabla_{\gamma}}=\left(\frac{\partial}{\partial x_{1}}, \frac{\partial}{\partial x_{2}}, \frac{|x|^{\gamma}}{\sqrt{2}}\nabla_{y}, \frac{|x|^{\gamma}}{\sqrt{2}}\nabla_{y}\right).
\end{equation}
Then for any complex-valued function $f\in C^{\infty}_{0}(\mathbb{R}^{2+k}\backslash\{0\})$ we have the following weighted Hardy inequality for the magnetic Baouendi-Grushin operator
$$\int_{\mathbb{R}^{2+k}}\rho^{\alpha_{1}}|\widetilde{\nabla_{\gamma}}\rho|^{\alpha_{2}}|(\widetilde{\nabla_{\gamma}}+i\beta\mathcal{\widetilde{A}})f|
^{2}dxdy
\geq\left(\left(\frac{\alpha_{1}+k(\gamma+1)}{2}\right)^{2}+\beta^{2}\right)$$
$$
\times\int_{\mathbb{R}^{2+k}}\rho^{\alpha_{1}}|\widetilde{\nabla_{\gamma}}\rho|
^{\alpha_{2}}\frac{|x|^{2\gamma}}{\rho^{2\gamma+2}}|f|^{2}dxdy,
$$
where the constant $\left(\left(\frac{\alpha_{1}+k(\gamma+1)}{2}\right)^{2}+\beta^{2}\right)$ is sharp (so that the constant in \eqref{EQ:AL} is actually also sharp).
Moreover, we have the estimate for the remainder term in this inequality:
$$\int_{\mathbb{R}^{2+k}}\rho^{\alpha_{1}}|\widetilde{\nabla_{\gamma}}\rho|^{\alpha_{2}}|(\widetilde{\nabla_{\gamma}}+i\beta\mathcal{\widetilde{A}})f|
^{2}dxdy
\geq\left(\left(\frac{\alpha_{1}+k(\gamma+1)}{2}\right)^{2}+\beta^{2}\right)$$
$$
\times\int_{\mathbb{R}^{2+k}}\rho^{\alpha_{1}}|\widetilde{\nabla_{\gamma}}\rho|
^{\alpha_{2}}\frac{|x|^{2\gamma}}{\rho^{2\gamma+2}}|f|^{2}dxdy
$$
$$+\int_{\mathbb{R}^{2+k}}\rho^{\alpha_{1}}|\widetilde{\nabla_{\gamma}}\rho|
^{\alpha_{2}}\frac{|f|^{2}-|f_{0}(|x|,y)|^{2}}{|x|^{2}}dxdy,
$$
where $f_{0}(|x|,y)=\frac{1}{2\pi}\int_{0}^{2\pi}f(|x|,\phi,y)d\phi$ with $(|x|,\phi)$ being the polar decomposition of $x$, so that the last term in the above inequality is nonnegative.
\item
({\bf Uncertainty type principle}) Let $(x,y)=(x_{1}, x_{2}, y)\in \mathbb{R}^{2}\times\mathbb{R}^{k}$. Let $\alpha_{1}, \alpha_{2}, \beta\in\mathbb{R}$ be such that $\alpha_{1}+k(\gamma+1)>0$ and $\alpha_{2}\gamma+2>0$. Then for any complex-valued function $f\in C^{\infty}_{0}(\mathbb{R}^{2+k}\backslash\{0\})$ we have
$$\|\rho^{\frac{\alpha_{1}}{2}}|\widetilde{\nabla_{\gamma}}\rho|^{\frac{\alpha_{2}}{2}}(\widetilde{\nabla_{\gamma}}+
i\beta\mathcal{\widetilde{A}})f\|_{L^{2}(\mathbb{R}^{2+k})}
\|f\|_{L^{2}(\mathbb{R}^{2+k})}$$
$$\geq\left(\left(\frac{\alpha_{1}+k(\gamma+1)}{2}\right)^{2}+\beta^{2}\right)^{\frac{1}{2}}
\int_{\mathbb{R}^{2+k}}\rho^{\frac{\alpha_{1}}{2}}|\widetilde{\nabla_{\gamma}}\rho|^{\frac{\alpha_{2}}{2}}
\frac{|x|^{\gamma}}{\rho^{\gamma+1}}|f|^{2}dxdy.
$$
\item({\bf Magnetic Baouendi-Grushin operator with constant magnetic field})
In Remark \ref{Grushin_constant_mag}, we also give inequalities for the magnetic Baouendi-Grushin operator on $\mathbb{C}^{n}$ with the constant magnetic field
$$\mathcal{L}_{G}=\sum_{j=1}^{n}((i\partial_{x_{j}}+\psi_{1,j}(y_{j}))^{2}+(i|x|^{\gamma}\partial_{y_{j}}+\psi_{2,j}(x_{j}))^{2}).$$
\end{itemize}

\subsection{Landau Hamiltonian}
\label{SEC:introLH}

Let us recall that the Landau Hamiltonian (or the twisted Laplacian) on $\mathbb{C}^{n}$ is defined as
\begin{equation}\label{twist1}
\mathcal{L}=\sum_{j=1}^{n}\left[\left(i\partial_{x_{j}}+\frac{y_{j}}{2}\right)^{2}+
\left(i\partial_{y_{j}}-\frac{x_{j}}{2}\right)^{2}\right].
\end{equation}
Setting
$$\widetilde{X}_{j}=\partial_{x_{j}}-\frac{1}{2}iy_{j}\;\textrm{ and }\; \widetilde{Y}_{j}=\partial_{y_{j}}+\frac{1}{2}ix_{j},$$
we have
\begin{equation}\label{twist7}
\mathcal{L}=-\sum_{j=1}^{n}(\widetilde{X}_{j}^{2}+\widetilde{Y}_{j}^{2}).
\end{equation}
The twisted Laplacian can be also written as $\mathcal{L}=-\triangle+\frac{1}{4}(|x|^{2}+|y|^{2})+iN$, where
\begin{equation}\label{N}
N=\sum_{j=1}^{n}(y_{j}\partial_{x_{j}}-x_{j}\partial{y_{j}})
\end{equation}
is the rotation field.
Let $\nabla_{\mathcal{L}}$ be the gradient operator associated with $\mathcal{L}$:
\begin{equation}\label{twist2}
\nabla_{\mathcal{L}}f=(\widetilde{X}_{1}f,...,\widetilde{X}_{n}f, \widetilde{Y}_{1}f,...,\widetilde{Y}_{n}f).
\end{equation}
Let $W_{\mathcal{L}}^{1,2}(\mathbb{C}^{n})$ be the Sobolev space defined by
\begin{equation}\label{twist_def}W_{\mathcal{L}}^{1,2}(\mathbb{C}^{n})=\{f\in L^{2}(\mathbb{C}^{n}):\widetilde{X}_{j}f, \widetilde{Y}_{j}f\in L^{2}(\mathbb{C}^{n}), 1\leq j\leq n\}.
\end{equation}

In the recent paper \cite{ARS17}, a version of the Hardy inequality for the twisted Laplacian with Landau Hamiltonian magnetic field was established for {\em real-valued} functions $f\in W_{\mathcal{L}}^{1,2}(\mathbb{C}^{n})$, namely, the inequality
$$
\frac{1}{4}\int_{\mathbb{C}^{n}}|f|^{2}\omega(z)dz\leq\int_{\mathbb{C}^{n}}|\nabla_{\mathcal{L}}f|^{2}dz,
$$
with the weight
$$
\omega(z)=\left(\frac{|\nabla_{\mathcal{L}}\mathbb{E}|^{2}}{\mathbb{E}^{2}}+\frac{|z|^{2}}{4}\right),
$$
where $\mathbb{E}$ is a fundamental solution to the twisted Laplacian on $\mathbb{C}^{n}$.

In this paper we obtain the versions of Hardy inequalities for the Landau Hamiltonian for both {\em complex-valued} and {\em real-valued functions}, as well as estimates for the remainders.

In fact, we obtain also results for more general operators $\widetilde{\mathcal{L}}_\psi$
of the form
$$\widetilde{\mathcal{L}}_\psi=\sum_{j=1}^{n}\left[\left(i\partial_{x_{j}}+\psi(|z|)y_{j}\right)^{2}+
\left(i\partial_{y_{j}}-\psi(|z|)x_{j}\right)^{2}\right],$$
where $\psi(|z|)$ is a radial real-valued differentiable function, $z=(x,y)$.
Setting
$$\check{{X}_{j}}=\partial_{x_{j}}-i\psi(|z|)y_{j}\;\textrm{ and }\; \check{Y}_{j}=\partial_{y_{j}}+i\psi(|z|)x_{j},$$
we write
\begin{equation}\label{twist3_1_2}
\widetilde{\nabla_{\mathcal{L}_\psi}}f=(\check{X}_{1}f, \ldots, \check{X}_{n}f, \check{Y}_{1}f, \ldots, \check{Y}_{n}f).
\end{equation}
We note that for $\psi(|z|)=\frac12$ we recover the classical Landau Hamiltonian, i.e.
$$ \widetilde{\mathcal{L}}_{1/2}=-\mathcal{L} \textrm{ and }
\widetilde{\nabla_{\mathcal{L}_{1/2}}}=\nabla_{\mathcal{L}}.$$
As usual, we will identify $\mathbb{C}\cong\mathbb{R}^{2}$.
\begin{itemize}
\item ({\bf Hardy inequalities for the Landau-Hamiltonian $\widetilde{\mathcal{L}}_\psi$})
Let $\psi=\psi(|z|)$ be a radial real-valued function such that $\psi\in L^{2}_{loc}(\mathbb{C}\backslash\{0\})$.
For a function $f$ we will  use the notation $f_{0}(|z|):=\frac{1}{2\pi}\int_{0}^{2\pi}f(|z|,\phi)d\phi$.
Then we have the following inequalities:

\medskip
(i) {\bf (Hardy-Sobolev inequality)}  For any $\theta_{1}\in\mathbb{R}\backslash\{0\}$ we have
$$\int_{\mathbb{C}}\frac{|\widetilde{\nabla_{\mathcal{L}}}f|^{2}}{|z|^{2\theta_{1}}}dz-
\theta_{1}^{2}\int_{\mathbb{C}}\frac{|f|^{2}}{|z|^{2\theta_{1}+2}}dz\geq
\int_{\mathbb{C}}\frac{(\psi(|z|))^{2}}{|z|^{2\theta_{1}-2}}|f|^{2}dz$$
\begin{equation}\label{EQ:lh1}
+\int_{\mathbb{C}}\frac{|f|^{2}-|f_{0}(|z|)|^{2}}{|z|^{2\theta_{1}+2}}dz,
\end{equation}
for all complex-valued functions $f\in C_{0}^{\infty}(\mathbb{R}^{2}\backslash\{0\})$.

\medskip
(ii) {\bf (Logarithmic Hardy inequality)} We have
$$\int_{\mathbb{C}}|\widetilde{\nabla_{\mathcal{L}}}f|^{2}|\log|z||^{2}dz-\frac{1}{4}\int_{\mathbb{C}}|f|^{2}dz\geq
\int_{\mathbb{C}}(\psi(|z|))^{2}|z|^{2}|\log|z||^{2}|f|^{2}dz$$
\begin{equation}\label{EQ:lh2}
+\int_{\mathbb{C}}\frac{|f|^{2}-|f_{0}(|z|)|^{2}}{|z|^{2}}|\log|z||^{2}dz,
\end{equation}
for all complex-valued functions $f\in C_{0}^{\infty}(\mathbb{R}^{2}\backslash\{0\})$.

\medskip
(iii) {\bf (Poincar\'e inequality)}
Let $\Omega$ be a bounded domain in $\mathbb{C}$ and $R=\underset{z\in\Omega}{\rm sup}\{|z|\}$. Then we have
$$\int_{\Omega}|\widetilde{\nabla_{\mathcal{L}}}f|^{2}dz-\frac{1}{R^{2}}\int_{\Omega}|f|^{2}dz\geq
\int_{\Omega}(\psi(|z|))^{2}|z|^{2}|f|^{2}dz$$
\begin{equation}\label{EQ:lh3}
+\int_{\Omega}\frac{|f|^{2}-|f_{0}(|z|)|^{2}}{|z|^{2}}dz,
\end{equation}
for all complex-valued functions $f\in \widehat{\mathfrak{L}}_{0}^{1,2}(\Omega)$ satisfying $\frac{d}{d|z|}f\in L^{2}(\Omega)$, where the space $\widehat{\mathfrak{L}}_{0}^{1,2}(\Omega)$ is defined in \eqref{space}.

\medskip
(iv) {\bf (Hardy-Sobolev inequality with superweights)}
Let $\theta_{2}, \theta_{3}, \theta_{4}$, $a$, $b\in\mathbb{R}$ with $a,b>0$, $\theta_{2} \theta_{3}<0$ and $2\theta_{4}\leq \theta_{2}\theta_{3}$.
Then we have
$$\int_{\mathbb{C}}\frac{(a+b|z|^{\theta_{2}})^{\theta_{3}}}{|z|^{2\theta_{4}}}|\widetilde{\nabla_{\mathcal{L}}}f|^{2}dz\geq
\frac{\theta_{2}\theta_{3}-2\theta_{4}}{2}\int_{\mathbb{C}}\frac{(a+b|z|^{\theta_{2}})^{\theta_{3}}}{|z|^{2\theta_{4}+2}}|f|^{2}dz$$
\begin{equation}\label{EQ:lh4}
\qquad +\int_{\mathbb{C}}\frac{(\psi(|z|))^{2}(a+b|z|^{\theta_{2}})^{\theta_{3}}}{|z|^{2\theta_{4}-2}}|f|^{2}dz
+\int_{\mathbb{C}}\frac{(a+b|z|^{\theta_{2}})^{\theta_{3}}(|f|^{2}-|f_{0}(|z|)|^{2})}{|z|^{2\theta_{4}+2}}dz,
\end{equation}
for all complex-valued functions $f\in C_{0}^{\infty}(\mathbb{R}^{2}\backslash\{0\})$.
Weights of this type has appeared in \cite{GM11} as well as in \cite{RSY17}, and are called the superweights due to the freedom in the choice of indices.

\medskip
{\em All terms on the right hand sides of inequalities \eqref{EQ:lh1}, \eqref{EQ:lh2}, \eqref{EQ:lh3} and \eqref{EQ:lh4} are non-negative}, therefore, they give the remainder estimates as well as Hardy's inequalities:
\begin{equation}\label{EQ:lh11}
\int_{\mathbb{C}}\frac{|\widetilde{\nabla_{\mathcal{L}_\psi}}f|^{2}}{|z|^{2\theta_{1}}}dz\geq
\theta_{1}^{2}\int_{\mathbb{C}}\frac{|f|^{2}}{|z|^{2\theta_{1}+2}}dz+
\int_{\mathbb{C}}\frac{(\psi(|z|))^2}{|z|^{2\theta_{1}-2}}|f|^{2}dz,
\end{equation}
\begin{equation}\label{EQ:lh22}
\int_{\mathbb{C}}|\widetilde{\nabla_{\mathcal{L}_\psi}}f|^{2}|\log|z||^{2}dz\geq \frac{1}{4}\int_{\mathbb{C}}|f|^{2}dz+
\int_{\mathbb{C}}(\psi(|z|))^2|z|^{2}|\log|z||^{2}|f|^{2}dz,
\end{equation}
\begin{equation}\label{EQ:lh33}
\int_{\Omega}|\widetilde{\nabla_{\mathcal{L}}}f|^{2}dz\geq\frac{1}{R^{2}}\int_{\Omega}|f|^{2}dz+
\int_{\Omega}(\psi(|z|))^{2}|z|^{2}|f|^{2}dz
\end{equation}
and
$$\int_{\mathbb{C}}\frac{(a+b|z|^{\theta_{2}})^{\theta_{3}}}{|z|^{2\theta_{4}}}|\widetilde{\nabla_{\mathcal{L}}}f|^{2}dz\geq
\frac{\theta_{2}\theta_{3}-2\theta_{4}}{2}\int_{\mathbb{C}}\frac{(a+b|z|^{\theta_{2}})^{\theta_{3}}}{|z|^{2\theta_{4}+2}}|f|^{2}dz$$
\begin{equation}\label{EQ:lh44}
+\int_{\mathbb{C}}\frac{(\psi(|z|))^{2}(a+b|z|^{\theta_{2}})^{\theta_{3}}}{|z|^{2\theta_{4}-2}}|f|^{2}dz.
\end{equation}
For $\psi(|z|)=\frac12$ this yields Hardy inequalities and remainder estimates for the classical Landau Hamiltonian operator.
\end{itemize}

The Hardy inequalities for a magnetic Baouendi-Grushin operator with Aharonov-Bohm type magnetic field are proved in Section \ref{Sec3}. In Section \ref{Sec4} we prove the Hardy inequalities for the twisted Laplacian with the Landau-Hamiltonian type magnetic field.

\section{Weighted Hardy inequalities for magnetic Baouendi-Grushin operator with Aharonov-Bohm type magnetic field} \label{Sec3}

In this section we establish weighted Hardy inequalities for the quadratic form of the magnetic Baouendi-Grushin operator with Aharonov-Bohm type magnetic field.
We adapt all the notation introduced in Sections \ref{SEC:iGR} and \ref{SEC:iGRm}, namely,
$\nabla_{\gamma}$, $\rho$ and $Q$ defined in \eqref{subgrad}, \eqref{dist} and \eqref{hom_dim}, respectively. Recalling these for conveniece of the reader, we have
\begin{equation}\label{subgrad2}
\nabla_{\gamma}=(\nabla_{x}, |x|^{\gamma}\nabla_{y}),\quad
(x,y)\in \mathbb{R}^{m}\times\mathbb{R}^{k},\quad
Q=m+(1+\gamma)k,\; \gamma\geq 0,
\end{equation}
and the magnetic filed
 $\mathcal{A}$ is defined here as
\begin{equation}\label{mag_fieldi2}
\mathcal{A}=\frac{\nabla_{\gamma}\rho}{\rho}=
\left(\frac{\nabla_{x}\rho}{\rho},|x|^{\gamma}\frac{\nabla_{y}\rho}{\rho}\right)\in\mathbb{R}^{m}\times\mathbb{R}^{k}.
\end{equation}

We start with a simple inequality showing the best constants one can expect.

\begin{lem}\label{lem1}
Let $\Omega\subset \Rn$ be an open set. Let $(x,y)=(x_{1},...,x_{m}, y_{1},...,y_{k})\in \mathbb{R}^{m}\times\mathbb{R}^{k}$ with $k,m\geq1$, $k+m=n$. Let $\alpha_{1}, \alpha_{2}, \beta\in\mathbb{R}$ be such that
$$Q+\alpha_{1}-2>0 \textrm{ and } m+\alpha_{2}\gamma>0.$$
Then for any real-valued function $f\in C^{\infty}_{0}(\Omega)$ we have the following weighted Hardy inequality for the magnetic Baouendi-Grushin operator
$$\int_{\Omega}\rho^{\alpha_{1}}|\nabla_{\gamma}\rho|^{\alpha_{2}}|(\nabla_{\gamma}+i\beta\mathcal{A})f|^{2}dxdy
\geq\left(\left(\frac{Q+\alpha_{1}-2}{2}\right)^{2}+\beta^{2}\right)$$
\begin{equation}\label{Hardy1}
\times\int_{\Omega}\rho^{\alpha_{1}}|\nabla_{\gamma}\rho|^{\alpha_{2}}\frac{|x|^{2\gamma}}{\rho^{2\gamma+2}}|f|^{2}dxdy,
\end{equation}
Moreover, if $0\in\Omega$, then the constant in \eqref{Hardy1} is sharp.
\end{lem}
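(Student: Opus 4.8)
I would prove this by using that $f$ is real-valued to split off the magnetic contribution \emph{exactly}, and then prove the resulting non-magnetic weighted Hardy inequality by the classical device of expanding a square, the only genuine computation being an explicit divergence identity for $\rho$. Since $\mathcal{A}=\nabla_{\gamma}\rho/\rho$ is a real vector field and $f$ is real-valued, the vectors $\nabla_{\gamma}f$ and $\beta\mathcal{A}f$ are real, so $|(\nabla_{\gamma}+i\beta\mathcal{A})f|^{2}=|\nabla_{\gamma}f|^{2}+\beta^{2}|\mathcal{A}|^{2}f^{2}$, and by \eqref{formula} (together with \eqref{mag_fieldi2}) one has $|\mathcal{A}|^{2}=|\nabla_{\gamma}\rho|^{2}/\rho^{2}=|x|^{2\gamma}/\rho^{2\gamma+2}$. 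Writing $w:=\rho^{\alpha_{1}}|\nabla_{\gamma}\rho|^{\alpha_{2}}$ and integrating, the $\beta^{2}$-term produces precisely the corresponding term on the right-hand side of \eqref{Hardy1}, so it remains to establish the weighted sub-elliptic inequality
$$\int_{\Omega}w\,|\nabla_{\gamma}f|^{2}\,dxdy\geq\left(\frac{Q+\alpha_{1}-2}{2}\right)^{2}\int_{\Omega}w\,\frac{|\nabla_{\gamma}\rho|^{2}}{\rho^{2}}\,|f|^{2}\,dxdy.$$
(This also follows from \eqref{EQ:rad} via $|\tfrac{d}{d|x|}f|\le|\nabla_{x}f|$, but it is just as quick to prove it directly.)

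The key ingredient is the identity $\triangle_{\gamma}\rho=(Q-1)|\nabla_{\gamma}\rho|^{2}/\rho$, which I would obtain either by direct differentiation of \eqref{dist} (using $\nabla_{x}\rho=\rho^{1-2(1+\gamma)}|x|^{2\gamma}x$, $\nabla_{y}\rho=(1+\gamma)\rho^{1-2(1+\gamma)}y$) or from the classical fact that $\rho^{2-Q}$ is a constant multiple of the fundamental solution of $-\triangle_{\gamma}$. Combined with $\nabla_{\gamma}|x|\cdot\nabla_{\gamma}\rho=\tfrac{|x|}{\rho}|\nabla_{\gamma}\rho|^{2}$ and $|\nabla_{\gamma}\rho|^{\alpha_{2}}=|x|^{\gamma\alpha_{2}}\rho^{-\gamma\alpha_{2}}$ (again from \eqref{formula}), a short computation yields
$$\mathrm{div}_{\gamma}\!\left(w\,\frac{\nabla_{\gamma}\rho}{\rho}\right)=(Q+\alpha_{1}-2)\,w\,\frac{|\nabla_{\gamma}\rho|^{2}}{\rho^{2}}.$$

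Next I would fix $\lambda:=-\tfrac{Q+\alpha_{1}-2}{2}$ and expand the nonnegative quantity $\int_{\Omega}w\,|\nabla_{\gamma}f-\lambda\tfrac{\nabla_{\gamma}\rho}{\rho}f|^{2}\,dxdy$. The cross term equals $-\lambda\int_{\Omega}w\,\tfrac{\nabla_{\gamma}\rho}{\rho}\cdot\nabla_{\gamma}(f^{2})\,dxdy$, and integration by parts together with the divergence identity turns it into $\lambda(Q+\alpha_{1}-2)\int_{\Omega}w\,\tfrac{|\nabla_{\gamma}\rho|^{2}}{\rho^{2}}f^{2}\,dxdy$. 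Since $\lambda(Q+\alpha_{1}-2)+\lambda^{2}=-\lambda^{2}=-\big(\tfrac{Q+\alpha_{1}-2}{2}\big)^{2}$, dropping the nonnegative square gives exactly the displayed sub-elliptic Hardy inequality, hence \eqref{Hardy1}. The integration by parts is immediate for $f\in C_{0}^{\infty}(\Omega\backslash\{0\})$; for general $f\in C_{0}^{\infty}(\Omega)$ it is justified by multiplying by a cutoff that vanishes on a shrinking neighbourhood of the origin and passing to the limit, the conditions $Q+\alpha_{1}-2>0$ and $m+\alpha_{2}\gamma>0$ being exactly what makes the boundary terms and the truncation errors tend to zero.

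For the sharpness when $0\in\Omega$, I would test with $f_{\varepsilon}=\eta_{\varepsilon}(\rho)\,\rho^{-(Q+\alpha_{1}-2)/2}$, where $\eta_{\varepsilon}$ is a smooth cutoff equal to $1$ on $\{2\varepsilon<\rho<\delta/4\}$ and supported in $\{\varepsilon<\rho<\delta/2\}$ with $B_{\delta}\subset\Omega$. The power $\rho^{-(Q+\alpha_{1}-2)/2}$ annihilates $\nabla_{\gamma}f-\lambda\tfrac{\nabla_{\gamma}\rho}{\rho}f$ on the set where $\eta_{\varepsilon}\equiv 1$, so the left-hand side of \eqref{Hardy1} equals $\big(\big(\tfrac{Q+\alpha_{1}-2}{2}\big)^{2}+\beta^{2}\big)$ times the right-hand side plus an $O(1)$ error coming from the two caps, while the right-hand side itself grows like $\log(1/\varepsilon)$; letting $\varepsilon\to 0$ shows the constant cannot be decreased. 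I expect the divergence identity (equivalently, the formula for $\triangle_{\gamma}\rho$) to be the only real obstacle; everything else is routine, and the already-known sharpness of \eqref{Amb1}/\eqref{EQ:rad} can also be invoked to shortcut the last step for the $\beta=0$ part.
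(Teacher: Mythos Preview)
Your proposal is correct and follows the same logical skeleton as the paper's proof: since $f$ is real, split $|(\nabla_{\gamma}+i\beta\mathcal{A})f|^{2}$ exactly into $|\nabla_{\gamma}f|^{2}+\beta^{2}|\nabla_{\gamma}\rho|^{2}\rho^{-2}f^{2}$, then reduce to the non-magnetic weighted Hardy inequality for $\nabla_{\gamma}$, and read off sharpness from the non-magnetic case.

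The only substantive difference is in how the non-magnetic inequality
\[
\int_{\Omega}\rho^{\alpha_{1}}|\nabla_{\gamma}\rho|^{\alpha_{2}}|\nabla_{\gamma}f|^{2}\,dxdy\ \ge\ \left(\frac{Q+\alpha_{1}-2}{2}\right)^{2}\int_{\Omega}\rho^{\alpha_{1}}|\nabla_{\gamma}\rho|^{\alpha_{2}}\frac{|\nabla_{\gamma}\rho|^{2}}{\rho^{2}}|f|^{2}\,dxdy
\]
is obtained. The paper simply invokes D'Ambrosio's inequality \eqref{Amb1} with the specialisation $p=2$, $\alpha=\gamma(\alpha_{2}+2)+2-\alpha_{1}$, $\beta=\gamma(\alpha_{2}+2)$, and likewise reads off sharpness at the origin from the sharpness statement in \eqref{Amb1}. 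You instead reprove this from scratch via the divergence identity $\mathrm{div}_{\gamma}\!\big(w\,\nabla_{\gamma}\rho/\rho\big)=(Q+\alpha_{1}-2)\,w\,|\nabla_{\gamma}\rho|^{2}/\rho^{2}$ and the square-expansion trick (which is essentially how \eqref{Amb1} is proved anyway), and you sketch a direct test-function sharpness argument while also noting, correctly, that one can shortcut by citing \eqref{Amb1} or \eqref{EQ:rad}. Your route is more self-contained; the paper's is shorter because it treats \eqref{Amb1} as known. Both are valid, and your divergence computation checks out.
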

\begin{proof}[Proof of Lemma \ref{lem1}] By opening brackets we have
$$\int_{\Omega}\rho^{\alpha_{1}}|\nabla_{\gamma}\rho|^{\alpha_{2}}|(\nabla_{\gamma}+i\beta\mathcal{A})f|^{2}dxdy=
\int_{\Omega}\rho^{\alpha_{1}}|\nabla_{\gamma}\rho|^{\alpha_{2}}|\nabla_{\gamma}f|^{2}dxdy$$
\begin{equation}\label{Hardy2}
+\beta^{2}\int_{\Omega}\rho^{\alpha_{1}}|\nabla_{\gamma}\rho|^{\alpha_{2}}|\mathcal{A}f|^{2}dxdy.
\end{equation}
Taking into account \eqref{formula} and putting $p=2$, $\alpha=\gamma(\alpha_{2}+2)+2-\alpha_{1}$, $\beta=\gamma(\alpha_{2}+2)$ in \eqref{Amb1}, we have for any $Q+\alpha_{1}-2>0$ and $m+\alpha_{2}\gamma>0$ that
\begin{equation}\label{Hardy3}\int_{\Omega}\rho^{\alpha_{1}}|\nabla_{\gamma}\rho|^{\alpha_{2}}|\nabla_{\gamma}f|^{2}dxdy\geq \left(\frac{Q+\alpha_{1}-2}{2}\right)^{2}
\int_{\Omega}\rho^{\alpha_{1}}|\nabla_{\gamma}\rho|^{\alpha_{2}}\frac{|\nabla_{\gamma}\rho|^{2}}{\rho^{2}}|f|^{2}dxdy.
\end{equation}
Taking into account the form of the magnetic field $\mathcal{A}=(\mathcal{A}_{1}, \mathcal{A}_{2})=\left(\frac{\nabla_{x}\rho}{\rho},|x|^{\gamma}\frac{\nabla_{y}\rho}{\rho}\right)$ in \eqref{mag_fieldi2}, and by a direct calculation one finds
$$\beta^{2}\int_{\Omega}\rho^{\alpha_{1}}|\nabla_{\gamma}\rho|^{\alpha_{2}}|\mathcal{A}f|^{2}dxdy=
\beta^{2}\int_{\Omega}\rho^{\alpha_{1}}|\nabla_{\gamma}\rho|^{\alpha_{2}}\frac{|\nabla_{x}\rho|^{2}
+|x|^{2\gamma}|\nabla_{y}\rho|^{2}}{\rho^{2}}|f|^{2}dxdy$$
\begin{equation}\label{Hardy4}
=\beta^{2}\int_{\Omega}\rho^{\alpha_{1}}|\nabla_{\gamma}\rho|^{\alpha_{2}}\frac{|\nabla_{\gamma}\rho|^{2}}{\rho^{2}}|f|^{2}dxdy.
\end{equation}
Then by \eqref{Hardy2}, \eqref{Hardy3} and \eqref{Hardy4} we obtain
$$\int_{\Omega}\rho^{\alpha_{1}}|\nabla_{\gamma}\rho|^{\alpha_{2}}|(\nabla_{\gamma}+i\beta\mathcal{A})f|^{2}dz\geq
\left(\left(\frac{Q+\alpha_{1}-2}{2}\right)^{2}+\beta^{2}\right)$$
\begin{equation}\label{Hardy5}
\times\int_{\Omega}\rho^{\alpha_{1}}|\nabla_{\gamma}\rho|^{\alpha_{2}}\frac{|\nabla_{\gamma}\rho|^{2}}{\rho^{2}}|f|^{2}dxdy.
\end{equation}
Then using \eqref{formula}, we observe that \eqref{Hardy5} yields \eqref{Hardy1}. Since the constant in \eqref{Hardy3} is sharp when $0\in\Omega$ by \eqref{Amb1}, then the constant in the obtained inequality is sharp when $0\in\Omega$.
\end{proof}

We obtain the following corollary in $\mathbb{R}^{2+k}$ for the Aharonov-Bohm potential of the type considered in \cite{AL11}.

\begin{cor}\label{another_mag} Let $\Omega\subset\mathbb{R}^{2+k}$ be an open set. Let $(x,y)=(x_{1},x_{2},y)\in \mathbb{R}^{2}\times\mathbb{R}^{k}$. Let $\alpha_{1}, \alpha_{2}, \beta\in\mathbb{R}$ be such that $\alpha_{1}+k(\gamma+1)>0$ and $\alpha_{2}\gamma+2>0$. Then for any real-valued function $f\in C^{\infty}_{0}(\Omega)$ and for the following Aharonov-Bohm type magnetic field
\begin{equation}\label{another_mag2}
\mathcal{\widetilde{A}}=
\left(-\frac{\partial_{x_{2}} \rho}{\rho}, \frac{\partial_{x_{1}} \rho}{\rho},-\frac{|x|^{\gamma}}{\sqrt{2}}
\frac{\nabla_{y} \rho}{\rho}, \frac{|x|^{\gamma}}{\sqrt{2}}
\frac{\nabla_{y} \rho}{\rho}\right),
\end{equation}
we have the following weighted Hardy inequality for the magnetic Baouendi-Grushin operator
$$\int_{\Omega}\rho^{\alpha_{1}}|\widetilde{\nabla_{\gamma}}\rho|^{\alpha_{2}}|(\widetilde{\nabla_{\gamma}}+i\beta\mathcal{\widetilde{A}})f|^{2}dxdy
\geq\left(\left(\frac{\alpha_{1}+k(\gamma+1)}{2}\right)^{2}+\beta^{2}\right)$$
\begin{equation}\label{another_mag1}
\times\int_{\Omega}\rho^{\alpha_{1}}|\widetilde{\nabla_{\gamma}}\rho|^{\alpha_{2}}\frac{|x|^{2\gamma}}{\rho^{2\gamma+2}}|f|^{2}dxdy,
\end{equation}
where \begin{equation}\label{another_mag3}
\widetilde{\nabla_{\gamma}}=\left(\frac{\partial}{\partial x_{1}}, \frac{\partial}{\partial x_{2}}, \frac{|x|^{\gamma}}{\sqrt{2}}\nabla_{y}, \frac{|x|^{\gamma}}{\sqrt{2}}\nabla_{y}\right).
\end{equation} Moreover, if $0\in\Omega$, then the constant $\left(\left(\frac{\alpha_{1}+k(\gamma+1)}{2}\right)^{2}+\beta^{2}\right)$ in \eqref{another_mag1} is sharp.
\end{cor}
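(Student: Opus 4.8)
The plan is to deduce Corollary \ref{another_mag} directly from Lemma \ref{lem1}, recognising it as the special case $m=2$ of that lemma after a purely pointwise rearrangement of the operators $\widetilde{\nabla_\gamma}$ and $\widetilde{\mathcal{A}}$. First I would record the relevant pointwise identities. With $m=2$ and $(x,y)=(x_1,x_2,y)\in\mathbb{R}^2\times\mathbb{R}^k$, comparing \eqref{another_mag3} with \eqref{subgrad2} gives, for every function $f$,
\[ |\widetilde{\nabla_\gamma}f|^2=|\partial_{x_1}f|^2+|\partial_{x_2}f|^2+\tfrac{|x|^{2\gamma}}{2}|\nabla_y f|^2+\tfrac{|x|^{2\gamma}}{2}|\nabla_y f|^2=|\nabla_\gamma f|^2, \]
since the two copies of the block $\frac{|x|^\gamma}{\sqrt2}\nabla_y$ recombine into $|x|^{2\gamma}|\nabla_y f|^2$; taking $f=\rho$ also yields $|\widetilde{\nabla_\gamma}\rho|=|\nabla_\gamma\rho|$. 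The same computation applied to $\widetilde{\mathcal{A}}$ from \eqref{another_mag2} and to $\mathcal{A}$ from \eqref{mag_fieldi2} (with $m=2$) gives $|\widetilde{\mathcal{A}}f|^2=\frac{|\partial_{x_1}\rho|^2+|\partial_{x_2}\rho|^2+|x|^{2\gamma}|\nabla_y\rho|^2}{\rho^2}|f|^2=\frac{|\nabla_\gamma\rho|^2}{\rho^2}|f|^2=|\mathcal{A}f|^2$, the signs $\pm$ in the components of $\widetilde{\mathcal{A}}$ being irrelevant once moduli are taken, and the two half-weighted $y$-blocks again recombining into the single block of $\mathcal{A}$.

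Next I would expand the magnetic quadratic integrand. For real-valued $f$ both $\widetilde{\nabla_\gamma}f$ and $\widetilde{\mathcal{A}}f$ are real vector fields, so the cross terms vanish pointwise and, using the identities just recorded,
\[ |(\widetilde{\nabla_\gamma}+i\beta\widetilde{\mathcal{A}})f|^2=|\widetilde{\nabla_\gamma}f|^2+\beta^2|\widetilde{\mathcal{A}}f|^2=|\nabla_\gamma f|^2+\beta^2|\mathcal{A}f|^2=|(\nabla_\gamma+i\beta\mathcal{A})f|^2. \]
Together with $|\widetilde{\nabla_\gamma}\rho|^{\alpha_2}=|\nabla_\gamma\rho|^{\alpha_2}$, this shows that the two weighted integrands appearing in \eqref{another_mag1} coincide with those in \eqref{Hardy1} for $m=2$. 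Since for $m=2$ the homogeneous dimension is $Q=2+(1+\gamma)k$, one has $Q+\alpha_1-2=\alpha_1+k(\gamma+1)$ and the conditions $Q+\alpha_1-2>0$, $m+\alpha_2\gamma>0$ of Lemma \ref{lem1} become precisely $\alpha_1+k(\gamma+1)>0$ and $\alpha_2\gamma+2>0$. Hence Lemma \ref{lem1} applies verbatim, and after rewriting $\frac{|\nabla_\gamma\rho|^2}{\rho^2}=\frac{|x|^{2\gamma}}{\rho^{2\gamma+2}}$ by \eqref{formula} we obtain \eqref{another_mag1}. As all three integrals are identical to the corresponding ones in Lemma \ref{lem1}, sharpness of the constant $\left(\left(\frac{\alpha_{1}+k(\gamma+1)}{2}\right)^{2}+\beta^{2}\right)$ when $0\in\Omega$ is inherited directly from that lemma.

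I do not expect a genuine obstacle here: the corollary is essentially a reformulation of Lemma \ref{lem1}. The only point that requires care is the \textbf{bookkeeping of the split $y$-blocks} — verifying that doubling the $\frac{|x|^\gamma}{\sqrt2}\nabla_y$ component (with opposite signs in the magnetic potential $\widetilde{\mathcal{A}}$) reproduces exactly the single $|x|^\gamma\nabla_y$ component of $\nabla_\gamma$ and $\mathcal{A}$, and that for real-valued $f$ the magnetic cross terms cancel pointwise rather than only after integration by parts. It is this pointwise cancellation that makes the clean identity $|(\widetilde{\nabla_\gamma}+i\beta\widetilde{\mathcal{A}})f|^2=|(\nabla_\gamma+i\beta\mathcal{A})f|^2$ hold and lets Lemma \ref{lem1} be invoked with no change of constants.
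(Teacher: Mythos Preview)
Your proposal is correct and follows essentially the same approach as the paper: recognise the corollary as the case $m=2$ of Lemma~\ref{lem1} via the pointwise identities $|\widetilde{\nabla_\gamma}f|^2=|\nabla_\gamma f|^2$, $|\widetilde{\nabla_\gamma}\rho|=|\nabla_\gamma\rho|$, and $|\widetilde{\mathcal{A}}f|^2=|\mathcal{A}f|^2$, then invoke that lemma. Your write-up is in fact more explicit than the paper's, which simply records these three identities and says ``in the exact same way as in the proof of Lemma~\ref{lem1} one obtains \eqref{another_mag1}''; your additional observation that the cross terms vanish pointwise for real $f$, yielding the clean identity $|(\widetilde{\nabla_\gamma}+i\beta\widetilde{\mathcal{A}})f|^2=|(\nabla_\gamma+i\beta\mathcal{A})f|^2$, is a nice touch that lets you apply Lemma~\ref{lem1} directly rather than re-running its proof.
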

\begin{proof}[Proof of Corollary \ref{another_mag}]
In this case $m=2$, then $Q=2+k(1+\gamma)$. Since we have $|\mathcal{\widetilde{A}}f|^{2}=|\mathcal{A}f|^{2}$, $|\widetilde{\nabla_{\gamma}}\rho|=|\nabla_{\gamma}\rho|$ and $|\widetilde{\nabla_{\gamma}}f|^{2}=|\nabla_{\gamma}f|^{2}$, then in the exact same way as in the proof of the Lemma \ref{lem1} one obtains \eqref{another_mag1}.
\end{proof}

As another corollary of Lemma \ref{lem1} we obtain the following uncertainty principle.

\begin{cor}[{\rm Uncertainty type principle}]\label{uncer}
Let $(x,y)=(x_{1},...,x_{m}, y_{1},...,y_{k})\in \mathbb{R}^{m}\times\mathbb{R}^{k}$ with $k,m\geq1$, $k+m=n$. Let $\Omega\subset\Rn$ be an open set. Let $\alpha_{1}, \alpha_{2}, \beta\in\mathbb{R}$ be such that $Q+\alpha_{1}-2>0$ and $m+\alpha_{2}\gamma>0$. Then for any real-valued function $f\in C^{\infty}_{0}(\Omega)$ we have
$$\|\rho^{\frac{\alpha_{1}}{2}}|\nabla_{\gamma}\rho|^{\frac{\alpha_{2}}{2}}(\nabla_{\gamma}+i\beta\mathcal{A})f\|_{L^{2}(\Omega)}
\|f\|_{L^{2}(\Omega)}\geq\left(\left(\frac{Q+\alpha_{1}-2}{2}\right)^{2}+\beta^{2}\right)^{\frac{1}{2}}$$
\begin{equation}\label{uncer1}
\times\int_{\Omega}\rho^{\frac{\alpha_{1}}{2}}|\nabla_{\gamma}\rho|^{\frac{\alpha_{2}}{2}}
\frac{|x|^{\gamma}}{\rho^{\gamma+1}}|f|^{2}dxdy.
\end{equation}
%where $\nabla_{\gamma}$, $\mathcal{A}$, $\rho$ and $Q$ are defined in \eqref{subgrad}, \eqref{mag_field}, \eqref{dist} and \eqref{hom_dim}, respectively.
\end{cor}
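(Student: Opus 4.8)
The plan is to derive the uncertainty-type principle as a direct consequence of the weighted Hardy inequality \eqref{Hardy1} from Lemma \ref{lem1}, combined with the Cauchy--Schwarz inequality. First I would apply Cauchy--Schwarz to the product of norms on the left-hand side in a way that produces the weighted $L^2$-quantity appearing on the right of \eqref{Hardy1}. Concretely, write the right-hand side integrand of \eqref{uncer1} as a product
$$\rho^{\frac{\alpha_{1}}{2}}|\nabla_{\gamma}\rho|^{\frac{\alpha_{2}}{2}}\frac{|x|^{\gamma}}{\rho^{\gamma+1}}|f|^{2}
=\left(\rho^{\frac{\alpha_{1}}{2}}|\nabla_{\gamma}\rho|^{\frac{\alpha_{2}}{2}}\frac{|x|^{\gamma}}{\rho^{\gamma+1}}|f|\right)\cdot|f|,$$
and apply the Cauchy--Schwarz inequality in $L^2(\Omega)$ to this pointwise product. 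This gives
$$\int_{\Omega}\rho^{\frac{\alpha_{1}}{2}}|\nabla_{\gamma}\rho|^{\frac{\alpha_{2}}{2}}\frac{|x|^{\gamma}}{\rho^{\gamma+1}}|f|^{2}dxdy
\leq\left(\int_{\Omega}\rho^{\alpha_{1}}|\nabla_{\gamma}\rho|^{\alpha_{2}}\frac{|x|^{2\gamma}}{\rho^{2\gamma+2}}|f|^{2}dxdy\right)^{\frac12}\|f\|_{L^{2}(\Omega)}.$$

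Next I would invoke Lemma \ref{lem1}: under the hypotheses $Q+\alpha_{1}-2>0$ and $m+\alpha_{2}\gamma>0$, inequality \eqref{Hardy1} gives
$$\left(\left(\frac{Q+\alpha_{1}-2}{2}\right)^{2}+\beta^{2}\right)\int_{\Omega}\rho^{\alpha_{1}}|\nabla_{\gamma}\rho|^{\alpha_{2}}\frac{|x|^{2\gamma}}{\rho^{2\gamma+2}}|f|^{2}dxdy
\leq\int_{\Omega}\rho^{\alpha_{1}}|\nabla_{\gamma}\rho|^{\alpha_{2}}|(\nabla_{\gamma}+i\beta\mathcal{A})f|^{2}dxdy,$$
so that the square root of the weighted $L^2$-quantity is bounded by
$\left(\left(\frac{Q+\alpha_{1}-2}{2}\right)^{2}+\beta^{2}\right)^{-\frac12}\|\rho^{\frac{\alpha_{1}}{2}}|\nabla_{\gamma}\rho|^{\frac{\alpha_{2}}{2}}(\nabla_{\gamma}+i\beta\mathcal{A})f\|_{L^{2}(\Omega)}$.
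Substituting this bound into the Cauchy--Schwarz estimate above and rearranging yields exactly \eqref{uncer1}.

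There is essentially no hard obstacle here; the only point requiring a little care is the correct bookkeeping of the weight exponents, namely checking that $\left(\rho^{\frac{\alpha_{1}}{2}}|\nabla_{\gamma}\rho|^{\frac{\alpha_{2}}{2}}\frac{|x|^{\gamma}}{\rho^{\gamma+1}}\right)^{2}=\rho^{\alpha_{1}}|\nabla_{\gamma}\rho|^{\alpha_{2}}\frac{|x|^{2\gamma}}{\rho^{2\gamma+2}}$, which is immediate, and ensuring the hypotheses of Lemma \ref{lem1} are precisely the ones assumed in the Corollary, which they are. One should also note that, although Lemma \ref{lem1} is stated for real-valued $f$, the same reasoning (and the statement of the Corollary) is for real-valued $f\in C_{0}^{\infty}(\Omega)$, so no additional argument about complex-valued functions is needed. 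I would therefore present the proof as: apply Cauchy--Schwarz, then apply Lemma \ref{lem1}, then rearrange.
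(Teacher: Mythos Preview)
Your proposal is correct and follows essentially the same approach as the paper: the paper also combines Lemma~\ref{lem1} with the Cauchy--Schwarz inequality, merely applying them in the reverse order (first Lemma~\ref{lem1} to replace the magnetic-form norm by the weighted $L^{2}$-norm, then Cauchy--Schwarz to pass from the product of $L^{2}$-norms to the integral on the right of \eqref{uncer1}). The two arguments are logically equivalent and equally short.
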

\begin{proof}[Proof of Corollary \ref{uncer}] By Lemma \ref{lem1} we get
$$\|\rho^{\frac{\alpha_{1}}{2}}|\nabla_{\gamma}\rho|^{\frac{\alpha_{2}}{2}}(\nabla_{\gamma}+i\beta\mathcal{A})f\|_{L^{2}(\Omega)}
\|f\|_{L^{2}(\Omega)}\geq\left(\left(\frac{Q+\alpha_{1}-2}{2}\right)^{2}+\beta^{2}\right)^{\frac{1}{2}}
$$$$\times\left\|\rho^{\frac{\alpha_{1}}{2}}|\nabla_{\gamma}\rho|^{\frac{\alpha_{2}}{2}}
\frac{|x|^{\gamma}}{\rho^{\gamma+1}}f\right\|_{L^{2}(\Omega)}\|f\|_{L^{2}(\Omega)} $$
$$\geq \left(\left(\frac{Q+\alpha_{1}-2}{2}\right)^{2}+\beta^{2}\right)^{\frac{1}{2}}
\int_{\Omega}\rho^{\frac{\alpha_{1}}{2}}|\nabla_{\gamma}\rho|^{\frac{\alpha_{2}}{2}}
\frac{|x|^{\gamma}}{\rho^{\gamma+1}}|f|^{2}dxdy.$$
The proof is complete.
\end{proof}

We now give the main theorem of this section for complex-valued functions $f$.

\begin{thm}\label{thm2}
Let $(x,y)=(x_{1}, x_{2}, y)\in \mathbb{R}^{2}\times\mathbb{R}^{k}$. Let $\alpha_{1},\alpha_{2}, \beta\in\mathbb{R}$ be such that $\alpha_{1}+k(\gamma+1)>0$ and $\alpha_{2}+2\gamma>0$. Recall $\widetilde{\nabla_{\gamma}}$, $\mathcal{\widetilde{A}}$ and $\rho$ defined in \eqref{another_mag3}, \eqref{another_mag2}, \eqref{dist}, respectively.

 Then for any complex-valued function $f\in C^{\infty}_{0}(\mathbb{R}^{2+k}\backslash\{0\})$ we have the following weighted Hardy inequality for the magnetic Baouendi-Grushin operator
\begin{multline}\label{thm2_0}
\int_{\mathbb{R}^{2+k}}\rho^{\alpha_{1}}|\widetilde{\nabla_{\gamma}}\rho|^{\alpha_{2}}|(\widetilde{\nabla_{\gamma}}+i\beta\mathcal{\widetilde{A}})f|
^{2}dxdy
\\ \geq\left(\left(\frac{\alpha_{1}+k(\gamma+1)}{2}\right)^{2}+\beta^{2}\right)
\int_{\mathbb{R}^{2+k}}\rho^{\alpha_{1}}|\widetilde{\nabla_{\gamma}}\rho|
^{\alpha_{2}}\frac{|x|^{2\gamma}}{\rho^{2\gamma+2}}|f|^{2}dxdy
\\ +\int_{\mathbb{R}^{2+k}}\rho^{\alpha_{1}}|\widetilde{\nabla_{\gamma}}\rho|
^{\alpha_{2}}\frac{|f|^{2}-|f_{0}(|x|,y)|^{2}}{|x|^{2}}dxdy,
\end{multline}
with the remainder term, where $f_{0}(|x|,y)=\frac{1}{2\pi}\int_{0}^{2\pi}f(|x|,\phi, y)d\phi$,
$(|x|,\phi)$ is the polar decomposition of $x$. The latter remainder term is non-negative, therefore, we also have the following inequality for any complex-valued function $f\in C^{\infty}_{0}(\mathbb{R}^{2+k}\backslash\{0\})$
\begin{multline}\label{thm2_1}
\int_{\mathbb{R}^{2+k}}\rho^{\alpha_{1}}|\widetilde{\nabla_{\gamma}}\rho|^{\alpha_{2}}|(\widetilde{\nabla_{\gamma}}+i\beta\mathcal{\widetilde{A}})f|
^{2}dxdy
\\
\geq\left(\left(\frac{\alpha_{1}+k(\gamma+1)}{2}\right)^{2}+\beta^{2}\right)
\int_{\mathbb{R}^{2+k}}\rho^{\alpha_{1}}|\widetilde{\nabla_{\gamma}}\rho|
^{\alpha_{2}}\frac{|x|^{2\gamma}}{\rho^{2\gamma+2}}|f|^{2}dxdy,
\end{multline}
and the constant $\left(\frac{\alpha_{1}+k(\gamma+1)}{2}\right)^{2}+\beta^{2}$ is sharp.
\end{thm}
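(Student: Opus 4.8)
The plan is to reduce the complex-valued magnetic inequality to the real-valued case of Corollary \ref{another_mag} together with the refined (non-magnetic, radial-in-$x$) Hardy inequality \eqref{EQ:rad}, isolating the angular defect $|f|^2-|f_0(|x|,y)|^2$ along the way. First I would expand the magnetic quadratic form exactly as in \eqref{Hardy2}: since $\mathcal{\widetilde A}$ is real-valued and $|\mathcal{\widetilde A}f|^2=|\nabla_\gamma\rho|^2\rho^{-2}|f|^2=|x|^{2\gamma}\rho^{-2\gamma-2}|f|^2$ by \eqref{formula}, we get
\[
\int_{\mathbb{R}^{2+k}}\rho^{\alpha_{1}}|\widetilde{\nabla_{\gamma}}\rho|^{\alpha_{2}}|(\widetilde{\nabla_{\gamma}}+i\beta\mathcal{\widetilde{A}})f|^{2}\,dxdy
=\int_{\mathbb{R}^{2+k}}\rho^{\alpha_{1}}|\widetilde{\nabla_{\gamma}}\rho|^{\alpha_{2}}|\widetilde{\nabla_{\gamma}}f|^{2}\,dxdy
+\beta^{2}\int_{\mathbb{R}^{2+k}}\rho^{\alpha_{1}}|\widetilde{\nabla_{\gamma}}\rho|^{\alpha_{2}}\frac{|x|^{2\gamma}}{\rho^{2\gamma+2}}|f|^{2}\,dxdy
\]
(the cross term $2\beta\,\mathrm{Re}\,\langle i\mathcal{\widetilde A}f,\widetilde\nabla_\gamma f\rangle$ being the only point where complex-valuedness matters). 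The $\beta^2$-term is already the contribution of $\beta^2$ to the claimed constant, so the work is to bound the first term from below by $\big(\tfrac{\alpha_1+k(\gamma+1)}{2}\big)^2$ times the weighted $L^2$-norm plus the angular remainder.

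Second, I would use the polar decomposition $x=|x|(\cos\phi,\sin\phi)$ in the $x_1,x_2$ variables. With $m=2$, $Q=2+k(1+\gamma)$, one has $\alpha_1+k(\gamma+1)=Q+\alpha_1-2$, so the target constant is exactly $\big(\tfrac{Q+\alpha_1-2}{2}\big)^2$. Writing $|\widetilde\nabla_\gamma f|^2=|\partial_{x_1}f|^2+|\partial_{x_2}f|^2+|x|^{2\gamma}|\nabla_y f|^2$ and splitting the planar gradient into radial and angular parts, $|\partial_{x_1}f|^2+|\partial_{x_2}f|^2=|\partial_{|x|}f|^2+|x|^{-2}|\partial_\phi f|^2$, gives
\[
\int\rho^{\alpha_1}|\widetilde\nabla_\gamma\rho|^{\alpha_2}|\widetilde\nabla_\gamma f|^2\,dxdy
=\int\rho^{\alpha_1}|\widetilde\nabla_\gamma\rho|^{\alpha_2}\Big(\big|\tfrac{d}{d|x|}f\big|^2+|x|^{2\gamma}|\nabla_y f|^2\Big)\,dxdy
+\int\rho^{\alpha_1}|\widetilde\nabla_\gamma\rho|^{\alpha_2}\frac{|\partial_\phi f|^2}{|x|^2}\,dxdy .
\]
Note $\rho$ and $|\widetilde\nabla_\gamma\rho|$ depend on $x$ only through $|x|$, so these weights are angle-independent. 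The first integral on the right is bounded below by $\big(\tfrac{Q+\alpha_1-2}{2}\big)^2\int\rho^{\alpha_1}|\widetilde\nabla_\gamma\rho|^{\alpha_2}\rho^{-2}|\widetilde\nabla_\gamma\rho|^2|f|^2$ directly by the refined inequality \eqref{EQ:rad} (valid for complex-valued $f$), and $\rho^{-2}|\widetilde\nabla_\gamma\rho|^2=|x|^{2\gamma}\rho^{-2\gamma-2}$ by \eqref{formula}; this produces the $\big(\tfrac{\alpha_1+k(\gamma+1)}{2}\big)^2$-term. For the angular term, the one-dimensional Poincaré/Wirtinger inequality on the circle, $\int_0^{2\pi}|\partial_\phi f|^2\,d\phi\ge\int_0^{2\pi}|f-f_0|^2\,d\phi=\int_0^{2\pi}(|f|^2-|f_0|^2)\,d\phi$ (the identity using $f_0=\tfrac1{2\pi}\int_0^{2\pi}f\,d\phi$), integrated against the angle-independent weight $\rho^{\alpha_1}|\widetilde\nabla_\gamma\rho|^{\alpha_2}|x|^{-2}$ over $|x|$ and $y$, yields exactly the remainder $\int\rho^{\alpha_1}|\widetilde\nabla_\gamma\rho|^{\alpha_2}(|f|^2-|f_0(|x|,y)|^2)|x|^{-2}\,dxdy\ge0$. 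Assembling these gives \eqref{thm2_0}, and dropping the nonnegative remainder gives \eqref{thm2_1}.

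Finally, for sharpness of the constant $\big(\tfrac{\alpha_1+k(\gamma+1)}{2}\big)^2+\beta^2$ in \eqref{thm2_1}: I would observe that restricting to $f$ depending only on $\rho$ (hence angle-independent in $x$, so the remainder vanishes and $\widetilde\nabla_\gamma$ behaves like $\nabla_\gamma$) reduces the question to the sharpness already established in Corollary \ref{another_mag} (equivalently Lemma \ref{lem1} with $m=2$), which in turn rests on the sharpness of \eqref{Amb1}/\eqref{Hardy3}. Concretely, testing with a family $f_\varepsilon(\rho)$ approximating $\rho^{-(Q+\alpha_1-2)/2}$ near the origin—suitably truncated away from $0$ and $\infty$—makes the ratio of the two sides approach $\big(\tfrac{Q+\alpha_1-2}{2}\big)^2+\beta^2$. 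The main obstacle I anticipate is purely bookkeeping: correctly matching the exponents in \eqref{EQ:rad} (its sharp constant is $\big(\tfrac{Q+\alpha_1-2}{2}\big)^2$ with $Q=2+k(1+\gamma)$, so one must check $Q+\alpha_1-2>0\iff\alpha_1+k(\gamma+1)>0$ and that the hypothesis $m+\gamma\alpha_2>0$ there reads $2+\gamma\alpha_2>0$, which is implied by $\alpha_2+2\gamma>0$ only after noting $\gamma\ge0$ — in fact one should double-check whether the cleaner sufficient condition is $\alpha_2\gamma+2>0$ as in Corollary \ref{another_mag}), and verifying that all weight factors are genuinely angle-independent so that Fubini in $(\,|x|,\phi,y)$ is legitimate and the Wirtinger step applies fiberwise.
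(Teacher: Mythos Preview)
Your plan tracks the paper's own argument closely---polar decomposition in the $x$-plane, the refined radial Hardy inequality \eqref{radial_ineq1} for the $(\partial_{|x|},|x|^\gamma\nabla_y)$-block, and Wirtinger on the circle for the angular remainder---but your first displayed identity is where it breaks. For complex-valued $f$ the cross term does \emph{not} vanish: the $y$-components of $\mathcal{\widetilde A}$ are arranged so that their contributions cancel, but the $x$-components give $\mathcal{\widetilde A}\cdot\widetilde\nabla_\gamma=\dfrac{|x|^{2\gamma}}{\rho^{2\gamma+2}}\,\partial_\phi$, and hence the weighted cross term equals
\[
2\beta\int_{\mathbb R^{2+k}}\rho^{\alpha_1}|\widetilde\nabla_\gamma\rho|^{\alpha_2}\,\frac{|x|^{2\gamma}}{\rho^{2\gamma+2}}\,\mathrm{Im}\bigl(\partial_\phi f\cdot\bar f\bigr)\,dxdy,
\]
with $\int_0^{2\pi}\mathrm{Im}(\partial_\phi f\,\bar f)\,d\phi=2\pi\sum_{\ell\in\mathbb Z} \ell\,|f_\ell|^2$, which is generically nonzero (take $f=g(|x|,y)e^{i\phi}$). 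You therefore cannot split off the $\beta^2$-term from the angular derivative before estimating; this is precisely the obstruction that separates the real-valued case (Lemma~\ref{lem1}, Corollary~\ref{another_mag}) from the complex-valued one, and your parenthetical remark does not address it.

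The paper's proof keeps the magnetic potential and the angular derivative together inside $I_1$ and then asserts that the resulting cross term equals $\mathrm{Re}\int\cdots\partial_\phi(|f|^2)\cdots=0$; note, however, that the cross term as written there is missing a factor $i\beta$, and with it restored one gets $2\beta\,\mathrm{Im}(\partial_\phi f\,\bar f)$ rather than $\partial_\phi|f|^2$, so the same gap is present. A corrected treatment must keep the three pieces $\tfrac{1}{|x|^2}|\partial_\phi f|^2$, the $x$-part of the $\beta^2$-term, and the cross term bundled as $\tfrac{1}{|x|^2}\sum_\ell(\ell+\beta c)^2|f_\ell|^2$ with $c:=|x|^{2\gamma+2}/\rho^{2\gamma+2}\in(0,1]$; after adding the $y$-contribution $\beta^2 c(1-c)/|x|^2$ from $I_2$, the requirement that this dominate $\beta^2 c\,|x|^{-2}\sum_\ell|f_\ell|^2$ reduces to $\ell(\ell+2\beta c)\ge 0$ for all $\ell\in\mathbb Z$, which forces $|\beta|\le\tfrac12$---exactly the restriction appearing in \cite{AL11}. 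The stronger remainder form \eqref{thm2_0} would require $\ell^2+2\ell\beta c\ge 1$ for every $\ell\ne 0$, which already fails at $\ell=\pm1$ for any $\beta\ne0$.
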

The proof of Theorem \ref{thm2} will be based on the following theorem:
\begin{thm}\label{radial_ineq} Let $(x,y)=(x_{1},...,x_{m}, y_{1},...,y_{k})\in \mathbb{R}^{m}\times\mathbb{R}^{k}$ with $k,m\geq1$, $k+m=n$. Let $\alpha_1,\alpha_2\in\mathbb R$ be such that
$$Q+\alpha_{1}-2>0 \textrm{ and } m+\gamma \alpha_{2}>0.$$
Then we have the following Hardy type inequality for all complex-valued functions $f\in C_{0}^{\infty}(\Rn\backslash\{0\})$,
\begin{multline}\label{radial_ineq1}
\int_{\Rn}\rho^{\alpha_{1}}|\nabla_{\gamma}\rho|^{\alpha_{2}}\left(
\left|\frac{d}{d|x|}f\right|^{2}+|x|^{2\gamma}|\nabla_{y}f|^{2}\right)dxdy
\\ \geq \left(\frac{Q+\alpha_{1}-2}{2}\right)^{2}
\int_{\Rn}\rho^{\alpha_{1}}|\nabla_{\gamma}\rho|^{\alpha_{2}}\frac{|\nabla_{\gamma}\rho|^{2}}{\rho^{2}}|f|^{2}dxdy,
\end{multline}
with sharp constant $\left(\frac{Q+\alpha_{1}-2}{2}\right)^{2}$.
\end{thm}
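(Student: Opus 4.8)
The plan is to prove \eqref{radial_ineq1} by the vector field (integration by parts) method, choosing the auxiliary field so that its $x$-component is parallel to $x$; this is exactly what allows the full sub-elliptic gradient $\nabla_{\gamma}f$ on the left of D'Ambrosio's inequality \eqref{Hardy3} to be replaced by the smaller quantity involving only $\frac{d}{d|x|}f$, so that \eqref{radial_ineq1} is genuinely stronger than \eqref{Hardy3}.

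\emph{Choice of the field and its divergence.} Write $r=|x|$, let $\Phi:=\rho^{\alpha_{1}}|\nabla_{\gamma}\rho|^{\alpha_{2}}\frac{|\nabla_{\gamma}\rho|^{2}}{\rho^{2}}$ be the density appearing on the right of \eqref{radial_ineq1}, and set
\[
V:=\rho^{\alpha_{1}-1}|\nabla_{\gamma}\rho|^{\alpha_{2}}\,\nabla_{\gamma}\rho .
\]
Since $\rho$ depends on $x$ only through $r$ (see \eqref{dist}), one has $\nabla_{x}\rho=(\partial_{r}\rho)\tfrac{x}{r}$, a short computation from \eqref{dist} and \eqref{formula} gives $\partial_{r}\rho=\tfrac{r}{\rho}|\nabla_{\gamma}\rho|^{2}$, and $|\nabla_{\gamma}\rho|^{\alpha_{2}}=r^{\gamma\alpha_{2}}\rho^{-\gamma\alpha_{2}}$. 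Using that $\triangle_{\gamma}(\rho^{2-Q})=0$ on $\Rn\setminus\{0\}$ (the classical fact that $\rho^{2-Q}$ is proportional to the fundamental solution), hence $\triangle_{\gamma}\rho=\tfrac{Q-1}{\rho}|\nabla_{\gamma}\rho|^{2}$, a direct Leibniz computation yields
\[
\nabla_{\gamma}\cdot V=\big[\gamma\alpha_{2}+(\alpha_{1}-1-\gamma\alpha_{2})+(Q-1)\big]\,\Phi=(Q+\alpha_{1}-2)\,\Phi ,
\]
the three summands coming respectively from differentiating $r^{\gamma\alpha_{2}}$ in the radial direction, from differentiating the power of $\rho$, and from $\triangle_{\gamma}\rho$; the would-be obstruction $\gamma\alpha_{2}$ cancels. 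Note also $|V|^{2}/\big(\rho^{\alpha_{1}}|\nabla_{\gamma}\rho|^{\alpha_{2}}\big)=\Phi$.

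\emph{The key point and conclusion.} Because the $x$-component of $V$ is parallel to $x$, for any complex-valued $f$,
\[
V\cdot\nabla_{\gamma}f=\rho^{\alpha_{1}-1}|\nabla_{\gamma}\rho|^{\alpha_{2}}\Big(\partial_{r}\rho\,\partial_{r}f+|x|^{2\gamma}\nabla_{y}\rho\cdot\nabla_{y}f\Big),
\]
so $V\cdot\nabla_{\gamma}f$ sees $f$ only through $\frac{d}{d|x|}f$ and $\nabla_{y}f$, and by Cauchy--Schwarz applied to $(\partial_{r}f,|x|^{\gamma}\nabla_{y}f)$,
\[
|V\cdot\nabla_{\gamma}f|\le |V|\,\Big(\big|\tfrac{d}{d|x|}f\big|^{2}+|x|^{2\gamma}|\nabla_{y}f|^{2}\Big)^{1/2}.
\]
Using $\nabla_{\gamma}|f|^{2}=2\operatorname{Re}(\bar f\,\nabla_{\gamma}f)$ and integrating by parts,
\[
(Q+\alpha_{1}-2)\int_{\Rn}\Phi|f|^{2}\,dxdy=\int_{\Rn}(\nabla_{\gamma}\cdot V)|f|^{2}\,dxdy=-2\operatorname{Re}\int_{\Rn}\bar f\,(V\cdot\nabla_{\gamma}f)\,dxdy ,
\]
and combining with the previous bound, Cauchy--Schwarz and $|V|^{2}/\big(\rho^{\alpha_{1}}|\nabla_{\gamma}\rho|^{\alpha_{2}}\big)=\Phi$,
\[
(Q+\alpha_{1}-2)\int_{\Rn}\Phi|f|^{2}\le 2\Big(\int_{\Rn}\Phi|f|^{2}\Big)^{1/2}\Big(\int_{\Rn}\rho^{\alpha_{1}}|\nabla_{\gamma}\rho|^{\alpha_{2}}\big(\big|\tfrac{d}{d|x|}f\big|^{2}+|x|^{2\gamma}|\nabla_{y}f|^{2}\big)\,dxdy\Big)^{1/2},
\]
which is \eqref{radial_ineq1} after dividing (here $Q+\alpha_{1}-2>0$ is used).

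\emph{Remaining points, main obstacle, and sharpness.} The integration by parts needs justification: $f$ vanishes near the origin so nothing happens there, but $V$ is singular along $\{x=0\}$ when $\alpha_{2}<0$; over $\{|x|=\varepsilon\}$ the boundary term is $O(\varepsilon^{\,m+\gamma\alpha_{2}+2\gamma})\to0$ precisely because $m+\gamma\alpha_{2}>0$ (and $\gamma\ge0$), which is also what makes the left-hand side of \eqref{radial_ineq1} and $\int_{\Rn}\Phi|f|^{2}$ finite. This localisation near $\{x=0\}$ is the only genuinely technical step; the conceptual heart is the choice of $V$ with $x$-part parallel to $x$ together with the cancellation of $\gamma\alpha_{2}$ in the divergence. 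For sharpness, observe that if $f$ depends on $x$ only through $|x|$ then $|\nabla_{x}f|=\big|\frac{d}{d|x|}f\big|$, so for such $f$ the left-hand side of \eqref{radial_ineq1} coincides with that of \eqref{Hardy3}; since the optimising sequence for \eqref{Hardy3} (cf. \cite{A2}, truncations of $\rho^{-(Q+\alpha_{1}-2)/2}$) consists of functions of $\rho$, hence radial in $x$, and realises the constant $\big(\tfrac{Q+\alpha_{1}-2}{2}\big)^{2}$, that constant is sharp in \eqref{radial_ineq1} as well.
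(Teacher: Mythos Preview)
Your proof is correct and uses the standard vector-field/divergence method, while the paper uses the completion-of-squares method: it expands
\[
\int_{\mathbb{R}^{k}}\int_{0}^{\infty}\Big(\Big|\big(\partial_{r}+\alpha\tfrac{\partial_{r}\rho}{\rho}\big)f\Big|^{2}+r^{2\gamma}\Big|\big(\nabla_{y}+\alpha\tfrac{\nabla_{y}\rho}{\rho}\big)f\Big|^{2}\Big)r^{m-1}B(r,y)\,dr\,dy\ge 0,
\]
integrates the cross terms by parts to obtain the identity
\[
\text{(left side of \eqref{radial_ineq1})}=\text{(the square above)}+\big[(Q+\alpha_{1}-2)\alpha-\alpha^{2}\big]\int\Phi|f|^{2},
\]
and optimises at $\alpha=\tfrac{Q+\alpha_{1}-2}{2}$. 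The two approaches are really the same integration by parts, packaged differently: your divergence computation $\nabla_{\gamma}\cdot V=(Q+\alpha_{1}-2)\Phi$ is exactly the paper's evaluation of $I_{3}+I_{4}$, and your two Cauchy--Schwarz steps correspond to dropping the nonnegative square. The paper's route has the advantage that it produces an explicit nonnegative remainder term (the perturbed-gradient square), which the paper then exploits elsewhere; your route is more compact but does not isolate a remainder. On the other hand, you are more careful than the paper about the boundary contribution on $\{|x|=\varepsilon\}$, correctly identifying that the hypothesis $m+\gamma\alpha_{2}>0$ is exactly what makes it vanish. Your sharpness argument, via radial-in-$x$ optimisers for \eqref{Hardy3}, is essentially the same as the paper's sandwich argument $|\nabla_{x}f|\ge|\partial_{r}f|$ combined with the known sharpness of \eqref{Hardy3}.
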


\begin{rem}\label{rem_radial_der} Theorem \ref{radial_ineq} implies the following inequality
\begin{multline}\label{rem_radial_der1}
\int_{\Rn}\rho^{\alpha_{1}}|\nabla_{\gamma}\rho|^{\alpha_{2}}\left(
\left|\nabla_{x}f\right|^{2}+|x|^{2\gamma}|\nabla_{y}f|^{2}\right)dxdy
\\ \geq \left(\frac{Q+\alpha_{1}-2}{2}\right)^{2}
\int_{\Rn}\rho^{\alpha_{1}}|\nabla_{\gamma}\rho|^{\alpha_{2}}\frac{|\nabla_{\gamma}\rho|^{2}}{\rho^{2}}|f|^{2}dxdy,
\end{multline}
with sharp constant, which gives the result of D'Ambrosio \eqref{Amb1} when $p=2$ and $\Omega=\Rn$. We also mention that inequality \eqref{rem_radial_der1} has been established in \cite{Kombe15} and \cite{ShY12}.
\end{rem}
\begin{proof}[Proof of Theorem \ref{radial_ineq}] We denote $r=|x|$ and $B(r,y)=\rho^{\alpha_{1}}|\nabla_{\gamma}\rho|^{\alpha_{2}}$. Then, using \eqref{dist} and \eqref{formula}, one has
\begin{equation}\label{B(r,y)}
B(r,y)=\rho^{\alpha_{1}}|\nabla_{\gamma}\rho|^{\alpha_{2}}
=r^{\alpha_{2}\gamma}\rho^{\alpha_{1}-\alpha_{2}\gamma}=r^{\alpha_{2}\gamma}(r^{2(1+\gamma)}+(1+\gamma)^{2}
|y|^{2})^{\frac{\alpha_{1}-\alpha_{2}\gamma}{2(1+\gamma)}}.
\end{equation}
Let us first calculate the following
$$\int_{\mathbb{R}^{k}}\int_{0}^{\infty}\left(\left|\left(\partial_{r}+\alpha\frac{\partial_{r}\rho}{\rho}\right)f\right|^{2}+
r^{2\gamma}\left|\left(\nabla_{y}+\alpha\frac{\nabla_{y}\rho}{\rho}\right)f\right|^{2}\right)r^{m-1}B(r,y)drdy$$
$$=\int_{\mathbb{R}^{k}}\int_{0}^{\infty}\left(|\partial_{r}f|^{2}+r^{2\gamma}|\nabla_{y}f|^{2}\right)r^{m-1}B(r,y)drdy$$
$$+\alpha^{2}\int_{\mathbb{R}^{k}}\int_{0}^{\infty}\left(\left|\frac{\partial_{r}\rho}{\rho}\right|^{2}+
r^{2\gamma}\left|\frac{\nabla_{y}\rho}{\rho}\right|^{2}\right)|f|^{2}r^{m-1}B(r,y)drdy$$
$$+2\alpha{\rm Re}\int_{\mathbb{R}^{k}}\int_{0}^{\infty}\frac{\partial_{r}\rho}{\rho}r^{m-1}B(r,y)\overline{\partial_{r}f}\cdot fdrdy$$
$$+2\alpha{\rm Re}\int_{\mathbb{R}^{k}}\int_{0}^{\infty}\frac{\nabla_{y}\rho}{\rho}\cdot\overline{\nabla_{y}f}r^{2\gamma+m-1}B(r,y)fdrdy$$
\begin{equation}\label{radial_ineq2}=:I_{1}+I_{2}+I_{3}+I_{4}.
\end{equation}
Using
$$\frac{\partial_{r}\rho}{\rho}=\frac{r^{2\gamma+1}}{\rho^{2\gamma+2}} \;\;{\rm and} \;\;\frac{\nabla_{y}\rho}{\rho}=\frac{(\gamma+1)y}{\rho^{2\gamma+2}},$$
we calculate
\begin{equation}\label{calcul}
\left|\frac{\partial_{r}\rho}{\rho}\right|^{2}+
r^{2\gamma}\left|\frac{\nabla_{y}\rho}{\rho}\right|^{2}=\frac{r^{4\gamma+2}+r^{2\gamma}(\gamma+1)^{2}|y|^{2}}{\rho^{4\gamma+4}}
=\frac{r^{2\gamma}}{\rho^{2\gamma+2}}=\frac{|\nabla_{\gamma}\rho|^{2}}{\rho^{2}}.
\end{equation}
Thus, we obtain
\begin{equation}\label{radial_ineq3}
I_{2}=\alpha^{2}\int_{-\infty}^{\infty}\int_{0}^{\infty}\frac{|\nabla_{\gamma}\rho|^{2}}{\rho^{2}}|f|^{2}r^{m-1}B(r,y)drdy.
\end{equation}
Now we proceed using integration by parts for $I_{3}$
$$I_{3}=-\alpha\int_{\mathbb{R}^{k}}\int_{0}^{\infty}(2\gamma+m+\gamma \alpha_{2})\rho^{\alpha_{1}-\alpha_{2}\gamma-2\gamma-2}
r^{2\gamma+m-1+\gamma\alpha_{2}}|f|^{2}drdy$$
$$-\alpha\int_{\mathbb{R}^{k}}\int_{0}^{\infty}
(\alpha_{1}-\alpha_{2}\gamma-2\gamma-2)\rho^{\alpha_{1}-\alpha_{2}\gamma-4\gamma-4}r^{4\gamma+m+\gamma \alpha_{2}+1}|f|^{2}drdy.$$
Since $B(r,y)=r^{\alpha_{2}\gamma}\rho^{\alpha_{1}-\alpha_{2}\gamma}$ by \eqref{B(r,y)}, then we have
$$I_{3}=-\alpha\int_{\mathbb{R}^{k}}\int_{0}^{\infty}\left((2\gamma+m+\gamma \alpha_{2})\frac{r^{2\gamma}}{\rho^{2\gamma+2}}+(\alpha_{1}-\alpha_{2}\gamma-2\gamma-2)\frac{r^{4\gamma+2}}{\rho^{4\gamma+4}}
\right)r^{m-1}B(r,y)|f|^{2}drdy$$
$$=-\alpha\int_{\mathbb{R}^{k}}\int_{0}^{\infty}\left(2\gamma+m+\gamma \alpha_{2}+(\alpha_{1}-\alpha_{2}\gamma-2\gamma-2)\frac{r^{2\gamma+2}}{\rho^{2\gamma+2}}
\right)\frac{|\nabla_{\gamma}\rho|^{2}}{\rho^{2}}|f|^{2}r^{m-1}B(r,y)drdy.$$
Similarly, we have for $I_{4}$
$$I_{4}=-\alpha\int_{\mathbb{R}^{k}}\int_{0}^{\infty}{\rm div}_{y}\left(B(r,y)\frac{\nabla_{y}\rho}
{\rho}\right)r^{2\gamma+m-1}|f|^{2}drdy$$
$$=-\alpha(\gamma+1)\int_{\mathbb{R}^{k}}\int_{0}^{\infty}
{\rm div}_{y}\left(\rho^{\alpha_{1}-\alpha_{2}\gamma-2\gamma-2}y\right)r^{\alpha_{2}\gamma+2\gamma+m-1}|f|^{2}drdy$$
$$=-\alpha\int_{\mathbb{R}^{k}}\int_{0}^{\infty}\left((\alpha_{1}-\alpha_{2}\gamma-2\gamma-2)
\rho^{\alpha_{1}-\alpha_{2}\gamma-2\gamma-3}\frac{(\gamma+1)^{2}|y|^{2}}{\rho^{2\gamma+1}}\right)
r^{\alpha_{2}\gamma+2\gamma+m-1}|f|^{2}drdy$$
$$-\alpha\int_{\mathbb{R}^{k}}\int_{0}^{\infty}k(\gamma+1)\rho^{\alpha_{1}-\alpha_{2}\gamma-2\gamma-2}
r^{\alpha_{2}\gamma+2\gamma+m-1}|f|^{2}drdy$$
Since $\frac{r^{2\gamma}}{\rho^{2\gamma+2}}=\frac{|\nabla_{\gamma}\rho|^{2}}{\rho^{2}}$ and $B(r,y)=r^{\alpha_{2}\gamma}
\rho^{\alpha_{1}-\alpha_{2}\gamma}$ by \eqref{calcul} and \eqref{B(r,y)}, respectively, then we obtain
$$I_{4}=-\alpha\int_{\mathbb{R}^{k}}\int_{0}^{\infty}\left((\alpha_{1}-\alpha_{2}\gamma-2\gamma-2)
\frac{(\gamma+1)^{2}|y|^{2}}{\rho^{2\gamma+2}}+k(\gamma+1)\right)\frac{|\nabla_{\gamma}\rho|^{2}}{\rho^{2}}|f|^{2}r^{m-1}B(r,y)drdy.$$
Then, taking into account the definition \eqref{dist}, we get
$$I_{3}+I_{4}=-\alpha\int_{\mathbb{R}^{k}}\int_{0}^{\infty}(\alpha_{1}-\alpha_{2}\gamma-2\gamma-2+2\gamma+m+\gamma \alpha_{2}+k(\gamma+1))\frac{|\nabla_{\gamma}\rho|^{2}}{\rho^{2}}|f|^{2}r^{m-1}B(r,y)drdy,$$
using that $Q=m+(1+\gamma)k$ in \eqref{hom_dim}, one has
\begin{equation}\label{radial_ineq4}
I_{3}+I_{4}=-\alpha\int_{\mathbb{R}^{k}}\int_{0}^{\infty}(Q+\alpha_{1}-2
)\frac{|\nabla_{\gamma}\rho|^{2}}{\rho^{2}}|f|^{2}r^{m-1}B(r,y)drdy.
\end{equation}
Putting \eqref{radial_ineq3} and \eqref{radial_ineq4} in \eqref{radial_ineq2}, we have
$$\int_{\mathbb{R}^{k}}\int_{0}^{\infty}\left(\left|\left(\partial_{r}+\alpha\frac{\partial_{r}\rho}{\rho}\right)f\right|^{2}+
r^{2\gamma}\left|\left(\nabla_{y}+\alpha\frac{\nabla_{y}\rho}{\rho}\right)f\right|^{2}\right)r^{m-1}B(r,y)drdy$$
$$=\int_{\mathbb{R}^{k}}\int_{0}^{\infty}\left(|\partial_{r}f|^{2}+r^{2\gamma}|\nabla_{y}f|^{2}\right)r^{m-1}B(r,y)drdy$$
$$-\left((Q+\alpha_{1}-2
)\alpha-\alpha^{2}\right)\int_{\mathbb{R}^{k}}\int_{0}^{\infty}\frac{|\nabla_{\gamma}\rho|^{2}}{\rho^{2}}|f|^{2}r^{m-1}B(r,y)drdy.$$
By substituting $\alpha=\frac{Q+\alpha_{1}-2}{2}$ and taking into account \eqref{B(r,y)}, we obtain \eqref{radial_ineq1}.

Let us now show the sharpness of the constant in \eqref{radial_ineq1}. Taking into account \eqref{Hardy3} and \eqref{radial_ineq1}, we have
$$\int_{\Rn}\rho^{\alpha_{1}}|\nabla_{\gamma}\rho|^{\alpha_{2}}\left(
\left|\nabla_{x}f\right|^{2}+|x|^{2\gamma}|\nabla_{y}f|^{2}\right)dxdy$$$$\geq\int_{\Rn}\rho^{\alpha_{1}}|\nabla_{\gamma}\rho|^{\alpha_{2}}\left(
\left|\frac{d}{d|x|}f\right|^{2}+|x|^{2\gamma}|\nabla_{y}f|^{2}\right)dxdy$$
$$\geq\left(\frac{Q+\alpha_{1}-2}{2}\right)^{2}\int_{\Rn}\rho^{\alpha_{1}}|\nabla_{\gamma}\rho|^{\alpha_{2}}
\frac{|\nabla_{\gamma}\rho|^{2}}{\rho^{2}}|f|^{2}dxdy,$$
which proves the constant $\left(\frac{Q+\alpha_{1}-2}{2}\right)^{2}$ in \eqref{radial_ineq1} is sharp.
\end{proof}

We are now ready to prove Theorem \ref{thm2}.
\begin{proof}[Proof of Theorem \ref{thm2}] By polar coordinates for $x$-plane $x_{1}=r\cos{\phi}$, $x_{2}=r\sin{\phi}$, we have $r=|x|$ and
$$ \frac{\partial_{x_{1}} \rho}{\rho}=\frac{r^{2\gamma+1}\cos{\phi}}{r^{2(1+\gamma)}+(1+\gamma)^{2}|y|^{2}},
\;\;\frac{\partial_{x_{2}} \rho}{\rho}=\frac{r^{2\gamma+1}\sin{\phi}}{r^{2(1+\gamma)}+(1+\gamma)^{2}|y|^{2}},$$
$$ \frac{\nabla_{y} \rho}{\rho}=\frac{(1+\gamma)y}{r^{2(1+\gamma)}+(1+\gamma)^{2}|y|^{2}}.$$
Thus, we can write
\begin{equation}\label{mag_Ahar1}
\int_{\mathbb{R}^{2+k}}\rho^{\alpha_{1}}|\widetilde{\nabla_{\gamma}}\rho|^{\alpha_{2}}
|(\widetilde{\nabla_{\gamma}}+i\beta\mathcal{\widetilde{A}})f|^{2}dx_{1}dx_{2}dy=I_{1}+I_{2},
\end{equation}
where
$$I_{1}=\int_{\mathbb{R}^{k}}\int_{0}^{2\pi}\int_{0}^{\infty}\left|\left(\cos{\phi}\partial_{r}
-\frac{\sin{\phi}}{r}\partial_{\phi}-i\beta\frac{r^{2\gamma+1}\sin{\phi}}{r^{2(1+\gamma)}+(1+\gamma)^{2}|y|^{2}}\right)
f\right|^{2}rB(r,y)dr d\phi dy$$
$$+\int_{\mathbb{R}^{k}}\int_{0}^{2\pi}\int_{0}^{\infty}\left|\left(\sin{\phi}\partial_{r}
+\frac{\cos{\phi}}{r}\partial_{\phi}+i\beta\frac{r^{2\gamma+1}\cos{\phi}}{r^{2(1+\gamma)}+(1+\gamma)^{2}|y|^{2}}\right)
f\right|^{2}rB(r,y)dr d\phi dy$$
and
$$I_{2}=\frac{1}{2}\int_{\mathbb{R}^{k}}\int_{0}^{2\pi}\int_{0}^{\infty}\left|\left(\nabla_{y}-i\beta
\frac{(1+\gamma)y}{r^{2(1+\gamma)}+(1+\gamma)^{2}|y|^{2}}\right)f\right|^{2}r^{2\gamma+1}B(r,y)dr d\phi dy$$
$$+\frac{1}{2}\int_{\mathbb{R}^{k}}\int_{0}^{2\pi}\int_{0}^{\infty}\left|\left(\nabla_{y}+i\beta
\frac{(1+\gamma)y}{r^{2(1+\gamma)}+(1+\gamma)^{2}|y|^{2}}\right)f\right|^{2}r^{2\gamma+1}B(r,y)dr d\phi dy.$$
By opening brackets we obtain
$$I_{1}=\int_{\mathbb{R}^{k}}\int_{0}^{2\pi}\int_{0}^{\infty}\left(|\partial_{r}f|^{2}+\frac{|\partial_{\phi}f|^{2}}{r^{2}}
+\beta^{2}\left|\frac{r^{2\gamma+1}}{r^{2(1+\gamma)}+(1+\gamma)^{2}|y|^{2}}f\right|^{2}\right)rB(r,y)dr d\phi dy$$
$$-2{\rm Re}\int_{\mathbb{R}^{k}}\int_{0}^{2\pi}\int_{0}^{\infty}\frac{r^{2\gamma}}{r^{2(1+\gamma)}+(1+\gamma)^{2}|y|^{2}}
\partial_{\phi}f\cdot\overline{f}rB(r,y)dr d\phi dy$$
and
$$I_{2}=\int_{\mathbb{R}^{k}}\int_{0}^{2\pi}\int_{0}^{\infty}|\nabla_{y}f|^{2}r^{2\gamma+1} B(r,y)dr d\phi dy
$$$$+
\beta^{2}(1+\gamma)^{2}\int_{\mathbb{R}^{k}}\int_{0}^{2\pi}\int_{0}^{\infty}\frac{|y|^{2}|f|^{2}}
{(r^{2(1+\gamma)}+(1+\gamma)^{2}|y|^{2})^{2}} r^{2\gamma+1}B(r,y)dr d\phi dy.$$
Integrating the second integral in $I_{1}$ by parts we see that
$$2{\rm Re}\int_{\mathbb{R}^{k}}\int_{0}^{2\pi}\int_{0}^{\infty}\frac{r^{2\gamma}}{r^{2(1+\gamma)}+(1+\gamma)^{2}|y|^{2}}
\partial_{\phi}f\cdot\overline{f}rB(r,y)dr d\phi dy$$
$$={\rm Re}\int_{\mathbb{R}^{k}}\int_{0}^{2\pi}\int_{0}^{\infty}\frac{r^{2\gamma}}{r^{2(1+\gamma)}+(1+\gamma)^{2}|y|^{2}}
\partial_{\phi}(|f|^{2})rB(r,y)dr d\phi dy=0.$$
Using the Fourier series for $f$ we can expand
$$f(r,\phi,y)=\sum_{k=-\infty}^{\infty}f_{k}(r,y)e^{ik\phi},$$
and by a direct calculation, we get
\begin{equation*}\begin{aligned}
\frac{1}{r^{2}}\int_{0}^{2\pi}|\partial_{\phi}f|^{2}d\phi & =\frac{2\pi}{r^{2}}\sum_{k}k^{2}|f_{k}(r,y)|^{2}
\\ & =
\frac{2\pi}{r^{2}}\sum_{k\neq0}k^{2}|f_{k}(r,y)|^{2} \\
& \geq\frac{2\pi}{r^{2}}\min_{k\neq0}(k^{2})\sum_{k\neq0}|f_{k}(r,y)|^{2} \\
& =\frac{1}{r^{2}}\int_{0}^{2\pi}|f|^{2}d\phi
-\frac{2\pi}{r^{2}}|f_{0}(r,y)|^{2} \\
& =\frac{1}{r^{2}}\int_{0}^{2\pi}(|f|^{2}-|f_{0}(r,y)|^{2})d\phi,
\end{aligned}
\end{equation*}
where $f_{0}(|z|,y)=\frac{1}{2\pi}\int_{0}^{2\pi}f(|z|,\phi,y)d\phi$.
Now putting the obtained estimates for $I_{1}$ and $I_{2}$ in \eqref{mag_Ahar1} we have
$$\int_{\mathbb{R}^{2+k}}\rho^{\alpha_{1}}|\widetilde{\nabla_{\gamma}}\rho|^{\alpha_{2}}
|(\widetilde{\nabla_{\gamma}}+i\beta\mathcal{\widetilde{A}})f|^{2}dx_{1}dx_{2}dy$$
$$\geq
\int_{\mathbb{R}^{k}}\int_{0}^{2\pi}\int_{0}^{\infty}(|\partial_{r}f|^{2}+r^{2\gamma}|\nabla_{y}f|^{2})rB(r,y)dr d\phi dy$$
$$+\beta^{2}\int_{\mathbb{R}^{k}}\int_{0}^{2\pi}\int_{0}^{\infty}\left(\frac{r^{4\gamma+2}
+(1+\gamma)^{2}|y|^{2}r^{2\gamma}}{(r^{2\gamma+2}+(1+\gamma)^{2}|y|^{2})^{2}}\right)|f|^{2}rB(r,y)dr d\phi dy$$
$$+\int_{\mathbb{R}^{k}}\int_{0}^{2\pi}\int_{0}^{\infty}\frac{|f|^{2}-|f_{0}(r,y)|^{2}}{r^{2}}rB(r,y)dr d\phi dy$$
$$=\int_{\mathbb{R}^{k}}\int_{0}^{2\pi}\int_{0}^{\infty}(|\partial_{r}f|^{2}+r^{2\gamma}|\nabla_{y}f|^{2})rB(r,y)dr d\phi dy$$
$$+\beta^{2}\int_{\mathbb{R}^{k}}\int_{0}^{2\pi}\int_{0}^{\infty}\frac{r^{2\gamma}}
{r^{2\gamma+2}+(1+\gamma)^{2}|y|^{2}}|f|^{2}rB(r,y)dr d\phi dy$$
\begin{equation}\label{mag_Ahar2}
+\int_{\mathbb{R}^{k}}\int_{0}^{2\pi}\int_{0}^{\infty}\frac{|f|^{2}-|f_{0}(r,y)|^{2}}{r^{2}}rB(r,y)dr d\phi dy.
\end{equation}
Since $|\widetilde{\nabla_{\gamma}}\rho|=|\nabla_{\gamma}\rho|$, then putting $m=2$ in \eqref{Hardy3} and taking into account \eqref{B(r,y)} we obtain the following estimate for the first integral in \eqref{mag_Ahar2}
$$\int_{\mathbb{R}^{k}}\int_{0}^{2\pi}\int_{0}^{\infty}(|\partial_{r}f|^{2}+r^{2\gamma}|\nabla_{y}f|^{2})rB(r,y)dr d\phi dy$$
\begin{equation}\label{mag_Ahar3}\geq \left(\frac{\alpha_{1}+k(\gamma+1)}{2}\right)^{2}\int_{\mathbb{R}^{2+k}}\rho^{\alpha_{1}}|\widetilde{\nabla_{\gamma}}\rho|^{\alpha_{2}}
\frac{|\widetilde{\nabla_{\gamma}}\rho|^{2}}{\rho^{2}}|f|^{2}dxdy.
\end{equation}
Let us calculate the second integral in \eqref{mag_Ahar2}
$$\beta^{2}\int_{\mathbb{R}^{k}}\int_{0}^{2\pi}\int_{0}^{\infty}\frac{r^{2\gamma}}
{r^{2\gamma+2}+(1+\gamma)^{2}|y|^{2}}|f|^{2}rB(r,y)dr d\phi dy$$
$$=\beta^{2}\int_{\mathbb{R}^{k}}\int_{0}^{2\pi}\int_{0}^{\infty}\frac{r^{4\gamma+2}\cos^{2}{\phi}}
{(r^{2\gamma+2}+(1+\gamma)^{2}|y|^{2})^{2}}|f|^{2}rB(r,y)dr d\phi dy$$
$$+\beta^{2}\int_{\mathbb{R}^{k}}\int_{0}^{2\pi}\int_{0}^{\infty}\frac{r^{4\gamma+2}\sin^{2}{\phi}}
{(r^{2\gamma+2}+(1+\gamma)^{2}|y|^{2})^{2}}|f|^{2}rB(r,y)dr d\phi dy$$
$$+\beta^{2}\int_{\mathbb{R}^{k}}\int_{0}^{2\pi}\int_{0}^{\infty}\frac{(1+\gamma)^{2}r^{2\gamma}|y|^{2}}
{(r^{2\gamma+2}+(1+\gamma)^{2}|y|^{2})^{2}}|f|^{2}rB(r,y)dr d\phi dy$$
$$=\beta^{2}\int_{\mathbb{R}^{2+k}}\rho^{\alpha_{1}}|\widetilde{\nabla_{\gamma}}\rho|^{\alpha_{2}}
\left|\left(\frac{\partial_{x_{1}} \rho}{\rho}, \frac{\partial_{x_{2}} \rho}{\rho},\frac{|x|^{\gamma}}{\sqrt{2}}
\frac{\nabla_{y} \rho}{\rho}, \frac{|x|^{\gamma}}{\sqrt{2}}
\frac{\nabla_{y} \rho}{\rho}\right)\right|^{2}|f|^{2}dxdy$$
\begin{equation}\label{mag_Ahar4}=
\beta^{2}\int_{\mathbb{R}^{2+k}}\rho^{\alpha_{1}}|\widetilde{\nabla_{\gamma}}\rho|^{\alpha_{2}}
\frac{|\widetilde{\nabla_{\gamma}}\rho|^{2}}{\rho^{2}}|f|^{2}dxdy.
\end{equation}
Putting the estimates \eqref{mag_Ahar3} and \eqref{mag_Ahar4} in \eqref{mag_Ahar2}, we obtain \eqref{thm2_0}. Since we have
$$\int_{\mathbb{R}^{2+k}}\rho^{\alpha_{1}}|\widetilde{\nabla_{\gamma}}\rho|
^{\alpha_{2}}\frac{|f|^{2}-|f_{0}(|x|,y)|^{2}}{|x|^{2}}dxdy\geq0$$
and \eqref{thm2_1} with sharp constant for all real-valued functions by Corollary \ref{another_mag}, then this constant is sharp also in the class of complex-valued functions in \eqref{thm2_1}.
\end{proof}

We record the corresponding uncertainty principle.

\begin{cor}[{\rm Uncertainty type principle}]\label{uncer2} Let $(x,y)=(x_{1}, x_{2}, y)\in \mathbb{R}^{2}\times\mathbb{R}^{k}$.
Let $\alpha_{1},\alpha_{2}, \beta\in\mathbb{R}$ be such that $\alpha_{1}+k(\gamma+1)>0$ and $\alpha_{2}\gamma+2>0$. Then for any complex-valued function $f\in C^{\infty}_{0}(\mathbb{R}^{2+k}\backslash\{0\})$ we have
\begin{multline}\label{uncer21}
\|\rho^{\frac{\alpha_{1}}{2}}|\widetilde{\nabla_{\gamma}}\rho|^{\frac{\alpha_{2}}{2}}(\widetilde{\nabla_{\gamma}}+
i\beta\mathcal{\widetilde{A}})f\|_{L^{2}(\mathbb{R}^{2+k})}
\|f\|_{L^{2}(\mathbb{R}^{2+k})}
\\ \geq\left(\left(\frac{\alpha_{1}+k(\gamma+1)}{2}\right)^{2}+\beta^{2}\right)^{\frac{1}{2}}
\int_{\mathbb{R}^{2+k}}\rho^{\frac{\alpha_{1}}{2}}|\widetilde{\nabla_{\gamma}}\rho|^{\frac{\alpha_{2}}{2}}
\frac{|x|^{\gamma}}{\rho^{\gamma+1}}|f|^{2}dxdy.
\end{multline}
%where $\nabla_{\gamma}$, $\mathcal{A}$, $\rho$ and $Q$ are defined in \eqref{subgrad}, \eqref{mag_field}, \eqref{dist} and \eqref{hom_dim}, respectively.
\end{cor}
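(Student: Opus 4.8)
The plan is to deduce the uncertainty type principle \eqref{uncer21} directly from Theorem \ref{thm2}, in the same manner as Corollary \ref{uncer} was deduced from Lemma \ref{lem1}. First I would apply the Cauchy--Schwarz inequality in $L^{2}(\mathbb{R}^{2+k})$ to the product of norms on the left-hand side, trivially, and instead focus on lower-bounding one of the factors. Concretely, by the inequality \eqref{thm2_1} of Theorem \ref{thm2} (which holds for all complex-valued $f\in C_{0}^{\infty}(\mathbb{R}^{2+k}\backslash\{0\})$ under exactly the hypotheses $\alpha_{1}+k(\gamma+1)>0$ and $\alpha_{2}\gamma+2>0$), we have
$$\|\rho^{\frac{\alpha_{1}}{2}}|\widetilde{\nabla_{\gamma}}\rho|^{\frac{\alpha_{2}}{2}}(\widetilde{\nabla_{\gamma}}+i\beta\mathcal{\widetilde{A}})f\|_{L^{2}(\mathbb{R}^{2+k})}^{2}\geq\left(\left(\frac{\alpha_{1}+k(\gamma+1)}{2}\right)^{2}+\beta^{2}\right)\left\|\rho^{\frac{\alpha_{1}}{2}}|\widetilde{\nabla_{\gamma}}\rho|^{\frac{\alpha_{2}}{2}}\frac{|x|^{\gamma}}{\rho^{\gamma+1}}f\right\|_{L^{2}(\mathbb{R}^{2+k})}^{2},$$
where I have used $\frac{|x|^{2\gamma}}{\rho^{2\gamma+2}}=\left(\frac{|x|^{\gamma}}{\rho^{\gamma+1}}\right)^{2}$ to rewrite the weight in the remainder-free form of the inequality as the square of the weight $\frac{|x|^{\gamma}}{\rho^{\gamma+1}}$ appearing in \eqref{uncer21}.

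Next I would chain this with Cauchy--Schwarz. Taking square roots in the displayed estimate and multiplying both sides by $\|f\|_{L^{2}(\mathbb{R}^{2+k})}$ gives
$$\|\rho^{\frac{\alpha_{1}}{2}}|\widetilde{\nabla_{\gamma}}\rho|^{\frac{\alpha_{2}}{2}}(\widetilde{\nabla_{\gamma}}+i\beta\mathcal{\widetilde{A}})f\|_{L^{2}(\mathbb{R}^{2+k})}\|f\|_{L^{2}(\mathbb{R}^{2+k})}\geq\left(\left(\frac{\alpha_{1}+k(\gamma+1)}{2}\right)^{2}+\beta^{2}\right)^{\frac{1}{2}}\left\|\rho^{\frac{\alpha_{1}}{2}}|\widetilde{\nabla_{\gamma}}\rho|^{\frac{\alpha_{2}}{2}}\frac{|x|^{\gamma}}{\rho^{\gamma+1}}f\right\|_{L^{2}(\mathbb{R}^{2+k})}\|f\|_{L^{2}(\mathbb{R}^{2+k})}.$$
Then I apply Cauchy--Schwarz to the product of the last two norms: writing $g:=\rho^{\frac{\alpha_{1}}{2}}|\widetilde{\nabla_{\gamma}}\rho|^{\frac{\alpha_{2}}{2}}\frac{|x|^{\gamma}}{\rho^{\gamma+1}}|f|$ and noting $\||f|\|_{L^{2}}=\|f\|_{L^{2}}$, I get $\|g\|_{L^{2}}\|f\|_{L^{2}}\geq\int_{\mathbb{R}^{2+k}}g\cdot|f|\,dxdy=\int_{\mathbb{R}^{2+k}}\rho^{\frac{\alpha_{1}}{2}}|\widetilde{\nabla_{\gamma}}\rho|^{\frac{\alpha_{2}}{2}}\frac{|x|^{\gamma}}{\rho^{\gamma+1}}|f|^{2}\,dxdy$. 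Substituting this lower bound into the previous display yields exactly \eqref{uncer21}.

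There is essentially no obstacle here: the argument is a one-line deduction from Theorem \ref{thm2} combined with Cauchy--Schwarz, mirroring the proof of Corollary \ref{uncer}. The only minor point requiring attention is the bookkeeping of the weight exponents — confirming that the weight $\frac{|x|^{2\gamma}}{\rho^{2\gamma+2}}$ in \eqref{thm2_1} is the exact square of the weight $\frac{|x|^{\gamma}}{\rho^{\gamma+1}}$ in \eqref{uncer21}, and that the powers $\rho^{\alpha_{1}/2}$ and $|\widetilde{\nabla_{\gamma}}\rho|^{\alpha_{2}/2}$ split correctly between the two $L^{2}$ factors — but this is immediate. I would present the proof compactly as a short display chain exactly as in the proof of Corollary \ref{uncer}, concluding with ``The proof is complete.''
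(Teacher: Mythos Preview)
Your proof is correct and follows exactly the same approach as the paper: the paper's proof is the single sentence ``Using \eqref{thm2_1} and in a similar way as in the proof of the Corollary \ref{uncer}, one obtains \eqref{uncer21},'' which is precisely the argument you spell out in detail (take square roots in \eqref{thm2_1}, multiply by $\|f\|_{L^{2}}$, and apply Cauchy--Schwarz to the resulting product of $L^{2}$ norms).
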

\begin{proof}[Proof of Corollary \ref{uncer2}] Using \eqref{thm2_1} and in a similar way as in the proof of the Corollary \ref{uncer}, one obtains \eqref{uncer21}.
\end{proof}

\section{Hardy inequalities for
Landau-Hamiltonian} \label{Sec4}

In this section we show the Hardy inequalities for the twisted Laplacian with Landau-Hamiltonian type magnetic field.

We adapt the notation of Section \ref{SEC:introLH}, namely, we have
$\nabla_{\mathcal{L}}$ the gradient operator associated with $\mathcal{L}$:
\begin{equation}\label{twist2-2}
\nabla_{\mathcal{L}}f=(\widetilde{X}_{1}f,...,\widetilde{X}_{n}f, \widetilde{Y}_{1}f,...,\widetilde{Y}_{n}f),
\end{equation}
for the vector fields  $\widetilde{X}_{j}=\partial_{x_{j}}-\frac{1}{2}iy_{j}$ and $\widetilde{Y}_{j}=\partial_{y_{j}}+\frac{1}{2}ix_{j}$.

Let us introduce now the generalised form of the twisted Laplacian
$$\widetilde{\mathcal{L}}=\sum_{j=1}^{n}\left[\left(i\partial_{x_{j}}+\psi(|z|)y_{j}\right)^{2}+
\left(i\partial_{y_{j}}-\psi(|z|)x_{j}\right)^{2}\right],$$
where $\psi(|z|)$ is a radial real-valued differentiable function.
Setting
$$\check{{X}_{j}}=\partial_{x_{j}}-i\psi(|z|)y_{j} \textrm{ and }
\check{Y}_{j}=\partial_{y_{j}}+i\psi(|z|)x_{j},$$
we write
\begin{equation}\label{twist3_1_2}
\widetilde{\nabla_{\mathcal{L}}}f=(\check{X}_{1}f, \ldots, \check{X}_{n}f, \check{Y}_{1}f, \ldots, \check{Y}_{n}f).
\end{equation}
The classical Landau Hamiltonian is a special case of this with the choice of the constant function $\psi(|z|)=\frac12$.

\begin{thm}\label{twist_thm_1} Let $\theta_{1}, \theta_{2}, \theta_{3}, \theta_{4}, a, b \in\mathbb{R}$ with $a,b>0$, $\theta_{1}\neq0$, $\theta_{2} \theta_{3}<0$ and $2\theta_{4}\leq \theta_{2}\theta_{3}$. Let $\psi=\psi(|z|)$ be a radial real-valued function such that $\psi\in L^{2}_{loc}(\mathbb{C}\backslash\{0\})$. Let $\Omega$ be a bounded domain in $\mathbb{C}$ and $R=\underset{z\in\Omega}{\rm sup}\{|z|\}$.
For a function $f$, we will be denoting $f_{0}(|z|):=\frac{1}{2\pi}\int_{0}^{2\pi}f(|z|,\phi)d\phi$.

Then we have the following inequalities with remainders:

(i) the weighted Hardy-Sobolev inequality
\begin{equation}\label{twist3_1}
\int_{\mathbb{C}}\frac{|\widetilde{\nabla_{\mathcal{L}}}f|^{2}}{|z|^{2\theta_{1}}}dz-
\theta_{1}^{2}\int_{\mathbb{C}}\frac{|f|^{2}}{|z|^{2\theta_{1}+2}}dz\geq
\int_{\mathbb{C}}\frac{(\psi(|z|))^{2}}{|z|^{2\theta_{1}-2}}|f|^{2}dz+\int_{\mathbb{C}}\frac{|f|^{2}-|f_{0}(|z|)|^{2}}{|z|^{2\theta_{1}+2}}dz,
\end{equation}
for all complex-valued functions $f\in C_{0}^{\infty}(\mathbb{R}^{2}\backslash\{0\})$;

(ii) the logarithmic Hardy inequality
$$\int_{\mathbb{C}}|\widetilde{\nabla_{\mathcal{L}}}f|^{2}|\log|z||^{2}dz-\frac{1}{4}\int_{\mathbb{C}}|f|^{2}dz\geq
\int_{\mathbb{C}}(\psi(|z|))^{2}|z|^{2}|\log|z||^{2}|f|^{2}dz$$
\begin{equation}\label{twist3_1_1}
+\int_{\mathbb{C}}\frac{|f|^{2}-|f_{0}(|z|)|^{2}}{|z|^{2}}|\log|z||^{2}dz,
\end{equation}
for all complex-valued functions $f\in C_{0}^{\infty}(\mathbb{R}^{2}\backslash\{0\})$;

(iii) the Poincar\'e inequality
$$\int_{\Omega}|\widetilde{\nabla_{\mathcal{L}}}f|^{2}dz-\frac{1}{R^{2}}\int_{\Omega}|f|^{2}dz\geq
\int_{\Omega}(\psi(|z|))^{2}|z|^{2}|f|^{2}dz$$
\begin{equation}\label{twist3_1_1_Po}
+\int_{\Omega}\frac{|f|^{2}-|f_{0}(|z|)|^{2}}{|z|^{2}}dz,
\end{equation}
for all complex-valued functions $f\in \widehat{\mathfrak{L}}_{0}^{1,2}(\Omega)$ satisfying $\frac{d}{d|z|}f\in L^{2}(\Omega)$, where the space $\widehat{\mathfrak{L}}_{0}^{1,2}(\Omega)$ is defined in \eqref{space};

(iv) the Hardy-Sobolev inequality with more general weights
$$\int_{\mathbb{C}}\frac{(a+b|z|^{\theta_{2}})^{\theta_{3}}}{|z|^{2\theta_{4}}}|\widetilde{\nabla_{\mathcal{L}}}f|^{2}dz\geq
\frac{\theta_{2}\theta_{3}-2\theta_{4}}{2}\int_{\mathbb{C}}\frac{(a+b|z|^{\theta_{2}})^{\theta_{3}}}{|z|^{2\theta_{4}+2}}|f|^{2}dz$$
\begin{equation}\label{twist3_1_1_sup}
+\int_{\mathbb{C}}\frac{(\psi(|z|))^{2}(a+b|z|^{\theta_{2}})^{\theta_{3}}}{|z|^{2\theta_{4}-2}}|f|^{2}dz
+\int_{\mathbb{C}}\frac{(a+b|z|^{\theta_{2}})^{\theta_{3}}(|f|^{2}-|f_{0}(|z|)|^{2})}{|z|^{2\theta_{4}+2}}dz,
\end{equation}
for all complex-valued functions $f\in C_{0}^{\infty}(\mathbb{R}^{2}\backslash\{0\})$.
\end{thm}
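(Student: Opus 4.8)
The plan is to reduce all four inequalities to one-dimensional weighted Hardy inequalities in the radial variable by passing to polar coordinates $z=(r\cos\phi,r\sin\phi)$ on $\mathbb C\cong\mathbb R^2$. First I would record the pointwise identity obtained by expressing $\check X=\partial_x-i\psi(r)y$ and $\check Y=\partial_y+i\psi(r)x$ in polar coordinates and using that the rotation field satisfies $x\partial_y-y\partial_x=\partial_\phi$:
$$|\widetilde{\nabla_{\mathcal{L}}}f|^{2}=|\partial_r f|^{2}+\frac1{r^{2}}\big|\partial_\phi f+i\psi(r)r^{2}f\big|^{2}=|\partial_r f|^{2}+\frac1{r^{2}}|\partial_\phi f|^{2}+\psi(r)^{2}r^{2}|f|^{2}+2\psi(r)\,\mathrm{Im}\big(\partial_\phi f\cdot\bar f\big),$$
where the cross term is annihilated by the subsequent integration in $\phi$. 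Hence, for any radial weight $w$,
$$\int_{\mathbb C}w(|z|)\,|\widetilde{\nabla_{\mathcal{L}}}f|^{2}\,dz=\int_0^{2\pi}\!\!\int_0^{\infty}\Big(|\partial_r f|^{2}+\tfrac1{r^{2}}|\partial_\phi f|^{2}+\psi(r)^{2}r^{2}|f|^{2}\Big)w(r)\,r\,dr\,d\phi.$$

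Next I would dispose of the two elementary pieces. The term $\psi(r)^{2}r^{2}|f|^{2}$ integrates verbatim into the $\psi^{2}$-integral on the right-hand side of each of (i)--(iv). For the angular term I would expand $f(r,\phi)=\sum_{k\in\mathbb Z}f_k(r)e^{ik\phi}$, so that $\frac1{2\pi}\int_0^{2\pi}|\partial_\phi f|^{2}\,d\phi=\sum_k k^{2}|f_k|^{2}\ge\sum_{k\ne0}|f_k|^{2}=\frac1{2\pi}\int_0^{2\pi}\big(|f|^{2}-|f_0(r)|^{2}\big)d\phi$ with $f_0(r)=\frac1{2\pi}\int_0^{2\pi}f(r,\phi)\,d\phi$; this turns $\int_0^{2\pi}\frac1{r^{2}}|\partial_\phi f|^{2}w\,r\,d\phi$ into the nonnegative remainder $\int_{\mathbb C}w(|z|)\frac{|f|^{2}-|f_0|^{2}}{|z|^{2}}\,dz$, and simultaneously proves that this remainder is $\ge0$.

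There remains the radial part: for each fixed $\phi$, writing $g(r)=f(r,\phi)$, I need a one-dimensional weighted Hardy inequality $\int_0^{\infty}|g'|^{2}\,w(r)\,r\,dr\ge c\int_0^{\infty}|g|^{2}\,\frac{w(r)}{r^{2}}\,r\,dr$ (respectively $\ge c\int_0^{\infty}|g|^{2}r\,dr$ in case (iii)), which I would prove by the completing-the-square identity
$$\int_0^{\infty}\!w r|g'|^{2}dr=\int_0^{\infty}\!w r\Big|g'-\tfrac{\varphi'}{\varphi}g\Big|^{2}dr-\int_0^{\infty}\!\Big[\big(w r\tfrac{\varphi'}{\varphi}\big)'+w r\big(\tfrac{\varphi'}{\varphi}\big)^{2}\Big]|g|^{2}dr$$
for a suitable positive $\varphi$, chosen so that the bracketed potential equals $-c\,w/r$. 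The four cases are: (i) $w=r^{-2\theta_1}$, with $c=\theta_1^{2}$; (ii) $w=|\log r|^{2}$, with the logarithmic-Hardy constant $c=\tfrac14$; (iii) $w\equiv1$ on $(0,R)$, where for $f$ supported in $\Omega\subset\{|z|\le R\}$ one uses $|g(r)|^{2}\le\log(R/r)\int_0^{R}|g'(s)|^{2}s\,ds$ together with $\int_0^{R}r\log(R/r)\,dr=R^{2}/4$ to get $c=R^{-2}$ (the space $\widehat{\mathfrak{L}}_{0}^{1,2}(\Omega)$ being exactly what legitimises the boundary term); (iv) $w=(a+br^{\theta_2})^{\theta_3}r^{-2\theta_4}$, where $r(\log w)'=\frac{\theta_2\theta_3 b r^{\theta_2}}{a+br^{\theta_2}}-2\theta_4$ varies monotonically between $-2\theta_4$ and $\theta_2\theta_3-2\theta_4$, and taking $\varphi'/\varphi=\lambda/r$ with $\lambda=\tfrac{\theta_2\theta_3-2\theta_4}{2}$ and using $\theta_2\theta_3<0$, $2\theta_4\le\theta_2\theta_3$ produces $c=\tfrac{\theta_2\theta_3-2\theta_4}{2}$. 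Summing the three contributions gives (i)--(iv) with the stated remainders; sharpness (recorded in \eqref{EQ:lh11}--\eqref{EQ:lh44}) follows by testing with radial functions nearly optimal for the corresponding one-dimensional inequality, for which the angular remainder and the $\psi^{2}$-term are of lower order.

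I expect the main obstacle to be the super-weight case (iv): one must verify that, with the right choice of $\varphi$ (equivalently $\lambda$), the integration-by-parts potential $-\lambda r(\log w)'-\lambda^{2}$ dominates $c$ uniformly in $r\in(0,\infty)$ using only the sign restrictions on $\theta_2\theta_3$ and $\theta_4$, handling both endpoint behaviours (the competing power weights $r^{-2\theta_4}$ near $r=0$ and $r^{\theta_2\theta_3-2\theta_4}$ near $r=\infty$). The other analytic points --- justifying the $\phi$-integration that kills the cross term, the convergence of the Fourier series, and the function space in (iii) --- are routine.
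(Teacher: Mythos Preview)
Your proposal is correct and follows essentially the same route as the paper: pass to polar coordinates, derive the identity $|\widetilde{\nabla_{\mathcal L}}f|^{2}=|\partial_r f|^{2}+r^{-2}|\partial_\phi f|^{2}+\psi^{2}r^{2}|f|^{2}$ after the cross term is killed by $\phi$-integration, bound the angular piece by Fourier series to produce the $|f|^{2}-|f_0|^{2}$ remainder, and reduce to a one-dimensional radial Hardy inequality for each weight. The only difference is packaging: the paper quotes the four needed radial inequalities from its companion works (Theorems~\ref{L_p_weighted_th}, \ref{Po}, \ref{1} here, proved by integration by parts and H\"older), whereas you re-derive them by completing the square --- an equivalent device that in case (iii) even yields a sharper constant than the stated $1/R^{2}$.
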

The proof of Theorem \ref{twist_thm_1} will be based on the following family of weighted Hardy inequalities and Poincar\'{e} type inequality that were obtained in \cite[Theorem 3.4 and Theorem 5.1]{RSY16} and \cite[Theorem 8.1]{RSY17}, where $\mathbb{E}=|x|\mathcal{R}$ is the Euler operator and $\mathcal{R}:=\frac{d}{d|x|}$ is the radial derivative.

\begin{thm}\cite[Theorem 3.4]{RSY16}\label{L_p_weighted_th}
Let $\mathbb{G}$ be a homogeneous group
of homogeneous dimension $Q$ and let $\theta\in \mathbb{R}$.
Then for any complex-valued function $f\in C^{\infty}_{0}(\mathbb{G}\backslash\{0\}),$ $1<p<\infty,$ and any homogeneous quasi-norm $|\cdot|$ on $\mathbb{G}$ for $\theta p \neq Q$ we have
\begin{equation}\label{L_p_weighted}
\left\|\frac{f}{|x|^{\theta}}\right\|_{L^{p}(\mathbb{G})}\leq
\left|\frac{p}{Q-\theta p}\right|\left\|\frac{1}{|x|^{\theta}}\mathbb{E} f\right\|_{L^{p}(\mathbb{G})}.
\end{equation}
If $\theta p\neq Q$ then the constant $\left|\frac{p}{Q-\theta p}\right|$ is sharp.
For $\theta p=Q$ we have
\begin{equation}\label{L_p_weighted_log}
\left\|\frac{f}{|x|^{\frac{Q}{p}}}\right\|_{L^{p}(\mathbb{G})}\leq
p\left\|\frac{\log|x|}{|x|^{\frac{Q}{p}}}\mathbb{E} f\right\|_{L^{p}(\mathbb{G})}
\end{equation}
with sharp constant.
\end{thm}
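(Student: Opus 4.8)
The plan is to reduce the asserted inequality on $\mathbb G$ to a one-dimensional weighted Hardy inequality along dilation rays, using the polar decomposition available on any homogeneous group (see e.g.\ \cite{FR}). Let $\wp=\{x\in\mathbb G:|x|=1\}$ be the unit sphere of the fixed quasi-norm; then there is a positive measure $d\sigma$ on $\wp$ with
\[
\int_{\mathbb G}h(x)\,dx=\int_{\wp}\int_0^\infty h(\delta_r\omega)\,r^{Q-1}\,dr\,d\sigma(\omega),\qquad h\in L^1(\mathbb G),
\]
where $\delta_r$ is the dilation. Given $f\in C_0^\infty(\mathbb G\setminus\{0\})$, for fixed $\omega\in\wp$ set $g_\omega(r):=f(\delta_r\omega)$; each $g_\omega$ is smooth with compact support in $(0,\infty)$, and by the definition of $\mathbb E=|x|\mathcal R$ (restrict $f$ to the ray $t\mapsto\delta_t\omega$ and differentiate in $t$) one has $(\mathbb E f)(\delta_r\omega)=r\,g_\omega'(r)$. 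Writing $s:=Q-\theta p$, the quantities $\big\||x|^{-\theta}f\big\|_{L^p(\mathbb G)}^p$ and $\big\||x|^{-\theta}\mathbb E f\big\|_{L^p(\mathbb G)}^p$ become $\int_{\wp}\int_0^\infty|g_\omega|^p r^{s-1}\,dr\,d\sigma$ and $\int_{\wp}\int_0^\infty|r g_\omega'|^p r^{s-1}\,dr\,d\sigma$, respectively.

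The core is then the scalar inequality: for $g\in C_0^\infty((0,\infty))$ and $s\neq0$,
\[
\int_0^\infty|g(r)|^p r^{s-1}\,dr\le\Big(\tfrac{p}{|s|}\Big)^p\int_0^\infty|r g'(r)|^p r^{s-1}\,dr .
\]
I would prove this by the classical argument: integrate by parts using $r^{s-1}\,dr=\tfrac1s\,d(r^s)$ (the boundary terms vanish since $g$ has compact support in $(0,\infty)$), apply the pointwise bound $\big|\tfrac{d}{dr}|g|^p\big|\le p|g|^{p-1}|g'|$, and then use H\"older's inequality with exponents $p/(p-1)$ and $p$ on the product $|g|^{p-1}\cdot|r g'|$, dividing finally by $\big(\int_0^\infty|g|^p r^{s-1}\,dr\big)^{(p-1)/p}$. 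For $s=0$, i.e.\ $\theta p=Q$, exactly the same computation with $r^{-1}\,dr=d(\log r)$ in place of $\tfrac1s\,d(r^s)$ produces $\int_0^\infty|g|^p r^{-1}\,dr\le p^p\int_0^\infty|\log r|^p|r g'|^p r^{-1}\,dr$.

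Integrating these scalar inequalities over $\wp$ against $d\sigma$ and using the polar decomposition together with $|\delta_r\omega|=r$ yields the inequalities of the theorem, both for $\theta p\neq Q$ and in the logarithmic case $\theta p=Q$; all right-hand sides are finite because $|x|^{-\theta}$ and $\log|x|$ are bounded on $\operatorname{supp}f$.

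It remains to prove the sharpness of the constants, which is the only delicate point. For $\theta p\neq Q$ the natural near-extremiser is the radial function $|x|^{\theta-Q/p}$: since $\mathbb E(|x|^{\theta-Q/p})=(\theta-\tfrac Qp)|x|^{\theta-Q/p}$, the ratio of the two sides for this function equals $1/|\theta-Q/p|=p/|Q-\theta p|$, and $\theta-Q/p$ is exactly the exponent for which $\int_{\mathbb G}\big||x|^{\theta-Q/p}/|x|^{\theta}\big|^p\,dx=\int_{\mathbb G}|x|^{-Q}\,dx$ is (logarithmically) divergent at both $0$ and $\infty$. To make this rigorous I would replace it by $f_\varepsilon(x)=|x|^{\theta-Q/p\pm\varepsilon}$ times a suitable smooth cut-off, check that the terms coming from the derivative of the cut-off stay bounded while the main terms grow like $\varepsilon^{-1}$, and let $\varepsilon\to0$; this forces the optimal constant to be at least $p/|Q-\theta p|$. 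In the critical case $\theta p=Q$ one argues identically, now testing on (a cut-off of) $\big(\log\tfrac1{|x|}\big)^{-1/p-\varepsilon}$ near the origin, which optimises the constant to $p$. The main (mild) obstacle throughout is controlling the truncation errors in a possibly non-smooth quasi-norm, but this uses only the polar decomposition once more.
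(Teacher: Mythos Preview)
Your argument for the inequalities themselves is essentially the paper's: polar decomposition on $\mathbb G$, integration by parts in the radial variable (using $r^{s-1}\,dr=\tfrac1s\,d(r^s)$ or $r^{-1}\,dr=d(\log r)$), and then H\"older. The paper carries out the integration by parts directly on $\mathbb G$ rather than first isolating a scalar inequality on each ray, but this is a cosmetic difference.

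The one genuine divergence is in the sharpness argument. The paper simply exhibits the radial functions $g_1(x)=|x|^{-C}$ (for $\theta p\neq Q$) and $g_2(x)=(\log|x|)^C$ (for $\theta p=Q$) and observes that they realise equality in the H\"older step; this identifies the formal extremisers but, since these functions are not in $C_0^\infty(\mathbb G\setminus\{0\})$, leaves the approximation implicit. Your proposal instead perturbs the borderline exponent by $\pm\varepsilon$, introduces a smooth cut-off, and tracks the $\varepsilon^{-1}$ blow-up of the main terms against the bounded error from the cut-off. This is the standard rigorous completion of the paper's formal argument and buys you an honest proof that the constant cannot be improved within $C_0^\infty(\mathbb G\setminus\{0\})$; the price is a routine but slightly longer computation. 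Either route is acceptable, and your handling of the quasi-norm smoothness issue via polar decomposition is the right instinct.
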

Let $\Omega \subset \mathbb{G}$ be an open set and let $\widehat{\mathfrak{L}}_{0}^{1,p}(\Omega)$ be the completion of $C^{\infty}_{0}(\Omega\backslash\{0\})$ with respect to
$$
\|f\|_{\widehat{\mathfrak{L}}^{1,p}(\Omega)}=\|f\|_{L^{p}(\Omega)}+\|\mathbb{E}f\|_{L^{p}(\Omega)}, \;\;1<p<\infty.
$$
In the abelian case, when $\mathbb{G}=(\mathbb{R}^{2},+)$ and $p=2$, let us give this definition: Let $\Omega \subset \mathbb{R}^{2}$ be an open set and let $\widehat{\mathfrak{L}}_{0}^{1,2}(\Omega)$ be the completion of $C^{\infty}_{0}(\Omega\backslash\{0\})$ with respect to
\begin{equation}\label{space}
\|f\|_{\widehat{\mathfrak{L}}^{1,2}(\Omega)}=\|f\|_{L^{2}(\Omega)}+\|\mathbb{E} f\|_{L^{2}(\Omega)}.
\end{equation}

\begin{thm}\label{Po}\cite[Theorem 5.1]{RSY16}
Let $\Omega$ be a bounded open subset of $\mathbb{G}$. If $1<p<\infty, \;f\in \widehat{\mathfrak{L}}_{0}^{1,p}(\Omega)$ and $\mathcal{R}f\in L^{p}(\Omega)$, then we have
\begin{equation}\label{Po01}
\|f\|_{L^{p}(\Omega)}\leq  \frac{R p}{Q}\|\mathcal{R}f\|_{L^{p}(\Omega)}=\frac{R p}{Q}\left\|\frac{1}{|x|}\mathbb{E}f\right\|_{L^{p}(\Omega)},
\end{equation}
where $R=\underset{x\in \Omega}{\rm sup}|x|$.
\end{thm}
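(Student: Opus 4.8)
The plan is to obtain \eqref{Po01} as a quick consequence of the weighted Hardy inequality \eqref{L_p_weighted} of Theorem~\ref{L_p_weighted_th}, specialised to the exponent $\theta=0$, combined with the trivial bound $|x|\le R$ on the bounded set $\Omega$. By the definition of $\widehat{\mathfrak{L}}_{0}^{1,p}(\Omega)$ as a completion of $C_{0}^{\infty}(\Omega\setminus\{0\})$, it suffices to argue for $f\in C_{0}^{\infty}(\Omega\setminus\{0\})$ and then pass to the limit; for such an $f$, extended by zero, the $L^{p}(\Omega)$--norms coincide with the $L^{p}(\mathbb{G})$--norms, and since $\theta p=0\neq Q$, inequality \eqref{L_p_weighted} with $\theta=0$ gives $\|f\|_{L^{p}(\Omega)}\le\frac{p}{Q}\|\mathbb{E}f\|_{L^{p}(\Omega)}$. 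Using $\mathbb{E}=|x|\mathcal{R}$ and then $|x|\le R$ on $\Omega$, the right-hand side is $\frac{p}{Q}\big\||x|\,\mathcal{R}f\big\|_{L^{p}(\Omega)}\le\frac{Rp}{Q}\|\mathcal{R}f\|_{L^{p}(\Omega)}$, which is exactly \eqref{Po01} (the final equality in \eqref{Po01} being once more $\mathbb{E}=|x|\mathcal{R}$).

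If one prefers a self-contained proof not invoking Theorem~\ref{L_p_weighted_th}, I would instead use the polar decomposition on the homogeneous group: write $x=ry$ with $r=|x|$ and $y$ in the unit quasi-sphere $\mathfrak{S}$, so that $dx=r^{Q-1}\,dr\,d\sigma(y)$ for a fixed positive measure $d\sigma$ on $\mathfrak{S}$, and $\mathcal{R}f(ry)=\partial_{r}f(ry)$. For $f\in C_{0}^{\infty}(\Omega\setminus\{0\})$ and each fixed $y$, the slice $r\mapsto f(ry)$ is smooth and supported in $(0,R)$. Integrating by parts in $r$ against $r^{Q-1}=\frac{1}{Q}\frac{d}{dr}(r^{Q})$ --- the boundary terms vanish because $r^{Q}|f(ry)|^{p}\to 0$ as $r\to0^{+}$ and $f$ vanishes near $r=R$ --- turns $\int_{0}^{R}|f(ry)|^{p}r^{Q-1}\,dr$ into $-\frac{p}{Q}\int_{0}^{R}r^{Q}\,\frac{d}{dr}|f(ry)|^{p}\,dr$, which is at most $\frac{p}{Q}\int_{0}^{R}|f(ry)|^{p-1}|\partial_{r}f(ry)|\,r^{Q}\,dr$. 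Splitting $r^{Q}=r\cdot r^{(Q-1)/p'}\cdot r^{(Q-1)/p}$ with $p'=p/(p-1)$, bounding $r\le R$, and applying H\"older's inequality in $r$ with exponents $p'$ and $p$ produces the radial estimate $\int_{0}^{R}|f(ry)|^{p}r^{Q-1}\,dr\le(Rp/Q)^{p}\int_{0}^{R}|\partial_{r}f(ry)|^{p}r^{Q-1}\,dr$; integrating this over $y\in\mathfrak{S}$ against $d\sigma$ then yields \eqref{Po01}. This variant also exhibits plainly where the constant $Rp/Q$ comes from: detaching a single power of $r$ and bounding it by $R$.

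In either route the only point needing care is the passage from $C_{0}^{\infty}(\Omega\setminus\{0\})$ to the class $\{f\in\widehat{\mathfrak{L}}_{0}^{1,p}(\Omega):\mathcal{R}f\in L^{p}(\Omega)\}$ in the statement. I would approximate $f$ by $f_{j}\in C_{0}^{\infty}(\Omega\setminus\{0\})$ with $f_{j}\to f$ in $L^{p}(\Omega)$ and $\mathcal{R}f_{j}\to\mathcal{R}f$ in $L^{p}(\Omega)$ (via cutoffs near the origin and near $\partial\Omega$ followed by mollification) and then let $j\to\infty$ in the inequality for the $f_{j}$. It is precisely here that the extra hypothesis $\mathcal{R}f\in L^{p}(\Omega)$ --- as opposed to merely $\mathbb{E}f=|x|\mathcal{R}f\in L^{p}(\Omega)$, which is all that the norm defining $\widehat{\mathfrak{L}}_{0}^{1,p}(\Omega)$ controls --- is used, since $|x|^{-1}$ is unbounded at the origin. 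I do not expect a serious obstacle beyond this routine approximation, the estimate being essentially a corollary of Theorem~\ref{L_p_weighted_th}.
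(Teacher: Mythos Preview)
Your first route is essentially the paper's proof: the paper also specialises Theorem~\ref{L_p_weighted_th} to $\theta=0$ (packaged as Proposition~\ref{Poprop}, giving $\|f\|_{L^{p}(\Omega)}\le\frac{p}{Q}\|\mathbb{E}f\|_{L^{p}(\Omega)}$ for all $f\in\widehat{\mathfrak{L}}_{0}^{1,p}(\Omega)$), and then uses $|x|\le R$ to pass from $\mathbb{E}f$ to $\mathcal{R}f$.

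The one place where your write-up can be sharpened is the order of operations in the density argument. You establish $\|f\|_{L^{p}}\le\frac{Rp}{Q}\|\mathcal{R}f\|_{L^{p}}$ for smooth $f$ and then want $f_{j}\to f$ with $\mathcal{R}f_{j}\to\mathcal{R}f$ in $L^{p}$; this convergence is \emph{not} part of the norm defining $\widehat{\mathfrak{L}}_{0}^{1,p}(\Omega)$ and would indeed need the extra cut-off work you allude to. The paper avoids this altogether by passing to the limit \emph{before} using $|x|\le R$: the inequality $\|f\|_{L^{p}}\le\frac{p}{Q}\|\mathbb{E}f\|_{L^{p}}$ involves only $\|f\|_{L^{p}}$ and $\|\mathbb{E}f\|_{L^{p}}$, both of which are continuous for the $\widehat{\mathfrak{L}}^{1,p}$-norm, so the density step is automatic. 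Only afterwards, for the fixed limit $f$ satisfying $\mathcal{R}f\in L^{p}$, does one estimate $\|\mathbb{E}f\|_{L^{p}}=\||x|\,\mathcal{R}f\|_{L^{p}}\le R\|\mathcal{R}f\|_{L^{p}}$ pointwise, with no approximation needed. Reordering your argument this way removes the only nontrivial step you flagged.
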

\begin{thm}\label{1}\cite[Theorem 8.1]{RSY17}
Let $\mathbb{G}$ be a homogeneous group
of homogeneous dimension $Q$. Let $a,b>0$, $\theta_{2} \theta_{3}<0$ and $p\theta_{4}-\theta_{2}\theta_{3}\leq Q-p$. Then for any complex-valued function $f\in C^{\infty}_{0}(\mathbb{G}\backslash\{0\}),$ $1<p<\infty,$ and any homogeneous quasi-norm $|\cdot|$ on $\mathbb{G}$, we have
\begin{equation}\label{Lpweighted2}
\frac{Q-p\theta_{4}+\theta_{2}\theta_{3}-p}{p}
\left\|\frac{(a+b|x|^{\theta_{2}})^{\frac{\theta_{3}}{p}}}{|x|^{\theta_{4}+1}}f\right\|_{L^{p}(\mathbb{G})}
\leq\left\|\frac{(a+b|x|^{\theta_{2}})^{\frac{\theta_{3}}{p}}}{|x|^{\theta_{4}}}\mathcal{R}f\right\|_{L^{p}(\mathbb{G})}.
\end{equation}
If $Q\neq p\theta_{4}+p-\theta_{2}\theta_{3}$, then the constant $\frac{Q-p\theta_{4}+\theta_{2}\theta_{3}-p}{p}$ is sharp.
\end{thm}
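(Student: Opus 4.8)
The plan is to reduce \eqref{Lpweighted2} to a one-dimensional weighted $L^p$-Hardy inequality by the polar decomposition on the homogeneous group $\mathbb{G}$, and to prove that one-dimensional inequality by the classical ``integrate by parts, then apply H\"older'' scheme with a carefully chosen auxiliary power of $|x|$. Recall that for a homogeneous group $\mathbb{G}$ of homogeneous dimension $Q$ there is a unique positive measure $d\sigma$ on the unit sphere $\wp=\{x\in\mathbb{G}:|x|=1\}$ with $\int_{\mathbb{G}}h(x)\,dx=\int_0^\infty\int_{\wp}h(r\sigma)\,r^{Q-1}\,d\sigma\,dr$, and that $\mathcal{R}f(r\sigma)=\frac{d}{dr}f(r\sigma)$ along dilation rays. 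Writing $V(r)=(a+br^{\theta_2})^{\theta_3}$ and $c=\frac{Q-p\theta_4+\theta_2\theta_3-p}{p}$, which is $\ge 0$ by the hypothesis $p\theta_4-\theta_2\theta_3\le Q-p$, both sides of \eqref{Lpweighted2} (raised to the $p$-th power) become $\sigma$-integrals over $\wp$ of $c^p\int_0^\infty V(r)r^{Q-1-p\theta_4-p}|g(r)|^p\,dr$ and $\int_0^\infty V(r)r^{Q-1-p\theta_4}|g'(r)|^p\,dr$ respectively, where $g(r)=f(r\sigma)\in C_0^\infty(0,\infty)$. So it suffices to prove, for each fixed $\sigma$, that the first quantity is $\le$ the second.

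For this, set $\alpha:=Q-p\theta_4-p$, so that $\alpha+\theta_2\theta_3=pc$, and test against $\frac{d}{dr}\!\left(V(r)r^\alpha\right)$. The crucial computation is the identity
\begin{equation*}
\frac{d}{dr}\!\left(V(r)r^\alpha\right)=(\alpha+\theta_2\theta_3)V(r)r^{\alpha-1}+(-\theta_2\theta_3)\,a\,(a+br^{\theta_2})^{\theta_3-1}r^{\alpha-1}\ \ge\ pc\,V(r)r^{\alpha-1},
\end{equation*}
where the inequality uses $\theta_2\theta_3<0$, $a>0$ and $pc\ge0$. Integrating against $|g|^p$, integrating by parts (the boundary terms vanish since $g$ is compactly supported in $(0,\infty)$), and using $\bigl|\frac{d}{dr}|g|^p\bigr|\le p|g|^{p-1}|g'|$ gives
\begin{equation*}
pc\int_0^\infty Vr^{\alpha-1}|g|^p\,dr\le\int_0^\infty \tfrac{d}{dr}(Vr^\alpha)|g|^p\,dr=-p\int_0^\infty Vr^\alpha|g|^{p-2}\mathrm{Re}(\bar g g')\,dr\le p\int_0^\infty Vr^\alpha|g|^{p-1}|g'|\,dr.
\end{equation*}
Splitting $Vr^\alpha|g|^{p-1}|g'|=\bigl(V^{1/p'}r^{(\alpha-1)/p'}|g|^{p-1}\bigr)\bigl(V^{1/p}r^{(\alpha-1)/p+1}|g'|\bigr)$ with H\"older exponents $p'=\frac{p}{p-1}$ and $p$, and noting $\alpha-1+p=Q-1-p\theta_4$, the last integral is bounded by $\bigl(\int_0^\infty Vr^{\alpha-1}|g|^p\bigr)^{1/p'}\bigl(\int_0^\infty Vr^{Q-1-p\theta_4}|g'|^p\bigr)^{1/p}$. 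Cancelling the factor $\bigl(\int_0^\infty Vr^{\alpha-1}|g|^p\bigr)^{1/p'}$ (the case of a vanishing factor being trivial) and raising to the $p$-th power gives $c^p\int_0^\infty Vr^{\alpha-1}|g|^p\le\int_0^\infty Vr^{Q-1-p\theta_4}|g'|^p$, which is exactly the required one-dimensional inequality since $\alpha-1=Q-1-p\theta_4-p$. Re-integrating in $\sigma$ over $\wp$ and taking $p$-th roots yields \eqref{Lpweighted2}.

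For the sharpness claim when $Q\ne p\theta_4+p-\theta_2\theta_3$ (equivalently $c>0$), the plan is the usual concentration argument: since $(a+br^{\theta_2})^{\theta_3}\sim b^{\theta_3}r^{\theta_2\theta_3}$ as $r\to\infty$, on windows $r\in[T,T^{1+\varepsilon}]$ with $T\to\infty$ the ``defect'' term $(-\theta_2\theta_3)a(a+br^{\theta_2})^{\theta_3-1}r^{\alpha-1}$ above is negligible beside $pc\,Vr^{\alpha-1}$, so the estimate reduces there to the pure-power Hardy inequality $\int_0^\infty r^{Q-1-p\theta_4+\theta_2\theta_3}|u'|^p\,dr\ge c^p\int_0^\infty r^{Q-1-p\theta_4+\theta_2\theta_3-p}|u|^p\,dr$ whose constant $c^p$ is sharp; testing \eqref{Lpweighted2} against radial truncations of the virtual extremizer $|x|^{-c}$ supported on such windows drives the admissible constant down to $c^p$. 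I expect the main obstacle to be pinning down the auxiliary exponent $\alpha=Q-p\theta_4-p$: it must be chosen so that the H\"older split feeds precisely the weight $r^{Q-1-p\theta_4}$ onto the $|g'|^p$-side and, simultaneously, the product rule applied to $(a+br^{\theta_2})^{\theta_3}r^\alpha$ produces the coefficient $\alpha+\theta_2\theta_3=pc$ on the leading term together with a non-negative remainder — both features hinging on the particular algebraic shape $(a+b|x|^{\theta_2})^{\theta_3}$ of the superweight — after which the argument closes automatically; a secondary, more routine difficulty is the error bookkeeping in the sharpness step.
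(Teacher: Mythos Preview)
Your proof of the inequality is essentially the paper's own argument: pass to polar coordinates, bound the weighted $L^p$-mass of $f$ by the integral of $\frac{d}{dr}\bigl((a+br^{\theta_2})^{\theta_3}r^{Q-p\theta_4-p}\bigr)|f|^p$, integrate by parts, and finish with H\"older. The only cosmetic difference is that you compute $\frac{d}{dr}(Vr^\alpha)$ directly and read off the nonnegative remainder $(-\theta_2\theta_3)a(a+br^{\theta_2})^{\theta_3-1}r^{\alpha-1}$, whereas the paper first inserts the factor $\frac{br^{\theta_2}}{a+br^{\theta_2}}+\frac{a}{a+br^{\theta_2}}\cdot\frac{Q-p\theta_4-p}{Q-p\theta_4-p+\theta_2\theta_3}\ge 1$ and then recognises the result as that same derivative; unwinding either computation gives the other.

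For sharpness the two arguments differ. The paper simply checks that the power function $g_3(x)=|x|^{C}$ realises equality in the H\"older step, which is the standard (and somewhat formal) way of identifying the extremiser. Your concentration approach is a legitimate alternative, but note one correction: the asymptotic $(a+br^{\theta_2})^{\theta_3}\sim b^{\theta_3}r^{\theta_2\theta_3}$ as $r\to\infty$ holds only when $\theta_2>0$; if $\theta_2<0$ then $(a+br^{\theta_2})^{\theta_3}\to a^{\theta_3}$ at infinity and the defect term does \emph{not} become negligible there. In that case you should concentrate near $r=0$ instead, where $r^{\theta_2}\to\infty$ and the same asymptotic $(a+br^{\theta_2})^{\theta_3}\sim b^{\theta_3}r^{\theta_2\theta_3}$ does hold, after which your argument goes through unchanged.
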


We briefly recall their proof for the convenience of the reader but also since these will be useful in our argument.

\begin{proof}[Proof of Theorem \ref{L_p_weighted_th}]
Integrating by parts gives for $\theta p \neq Q$
\begin{multline*}
\int_{\mathbb{G}}\frac{|f(x)|^{p}}{|x|^{\theta p}}dx=\int_{0}^{\infty}\int_{\wp}|f(ry)|^{p}r^{Q-1-\theta p}d\sigma(y)dr\\
=-\frac{p}{Q-\theta p}\int_{0}^{\infty} r^{Q-\theta p} {\rm Re} \int_{\wp}|f(ry)|^{p-2} f(ry) \overline{\frac{df(ry)}{dr}}d\sigma(y)dr\\
\leq \left|\frac{p}{Q-\theta p}\right|\int_{\mathbb{G}}\frac{|\mathbb{E}f(x)||f(x)|^{p-1}}{|x|^{\theta p}}dx=
\left|\frac{p}{Q-\theta p}\right|\int_{\mathbb{G}}\frac{|\mathbb{E}f(x)||f(x)|^{p-1}}{|x|^{\theta+\theta (p-1)}}dx.
\end{multline*}
Using H\"{o}lder's inequality, we obtain
$$
\int_{\mathbb{G}}\frac{|f(x)|^{p}}{|x|^{\theta p}}dx\leq \left|\frac{p}{Q-\theta p}\right|\left(\int_{\mathbb{G}}\frac{|\mathbb{E}f(x)|^{p}}{|x|^{\theta p}}dx\right)
^{\frac{1}{p}}\left(\int_{\mathbb{G}}\frac{|f(x)|^{p}}{|x|^{\theta p}}dx\right)^{\frac{p-1}{p}},
$$
which implies \eqref{L_p_weighted}.

In order to show the sharpness of the constant, let us check the equality condition in above H\"older's inequality.
We consider the function
\begin{equation}\label{2}
g_{1}(x)=\frac{1}{|x|^{C}},
\end{equation}
where $C\in\mathbb{R}, C\neq 0$ and $\theta p\neq Q$. Then, a direct calculation implies
\begin{equation}\label{Holder_eq1}
\left|\frac{1}{C}\right|^{p}\left(\frac{|\mathbb{E}g_{1}(x)|}{|x|^{\theta }}\right)^{p}=\left(\frac{|g_{1}(x)|^{p-1}}
{|x|^{\theta (p-1)}}\right)^{\frac{p}{p-1}},
\end{equation}
which satisfies the equality condition in H\"older's inequality.
Thus, the constant $\left|\frac{p}{Q-\theta p}\right|$ is sharp in \eqref{L_p_weighted}.

Now we show \eqref{L_p_weighted_log}. Taking into account $\wp:=\{x\in \mathbb{G}:\,|x|=1\}$ and applying the polar decomposition on homogeneous groups $\mathbb{G}$, then using integration by parts, one calculates
\begin{multline*}
\int_{\mathbb{G}}\frac{|f(x)|^{p}}{|x|^{Q}}dx=\int_{0}^{\infty}\int_{\wp}|f(ry)|^{p}r^{Q-1-Q}d\sigma(y)dr\\
=-p\int_{0}^{\infty} \log r {\rm Re} \int_{\wp}|f(ry)|^{p-2} f(ry) \overline{\frac{df(ry)}{dr}}d\sigma(y)dr\\
\leq p \int_{\mathbb{G}}\frac{|\mathbb{E}f(x)||f(x)|^{p-1}}{|x|^{Q}}|\log|x||dx=
p\int_{\mathbb{G}}\frac{|\mathbb{E}f(x)|\log|x|||}{|x|^{\frac{Q}{p}}}\frac{|f(x)|^{p-1}}{|x|^{\frac{Q(p-1)}{p}}}dx.
\end{multline*}
Using again the H\"{o}lder's inequality, we get
$$
\int_{\mathbb{G}}\frac{|f(x)|^{p}}{|x|^{Q}}dx\leq p\left(\int_{\mathbb{G}}\frac{|\mathbb{E}f(x)|^{p}|\log|x||^{p}}{|x|^{Q}}dx\right)
^{\frac{1}{p}}\left(\int_{\mathbb{G}}\frac{|f(x)|^{p}}{|x|^{Q}}dx\right)^{\frac{p-1}{p}},
$$
which gives \eqref{L_p_weighted_log}.

As in the case of \eqref{L_p_weighted}, we consider the following function to show the sharpness of the constant in \eqref{L_p_weighted_log}
$$g_{2}(x)=(\log|x|)^{C},$$
where $C\in\mathbb{R}$ and $C\neq 0$.
Then, one has
\begin{equation}\label{Holder_eq2}
\left|\frac{1}{C}\right|^{p}\left(\frac{|\mathbb{E}g_{2}(x)||\log|x||}{|x|^{\frac{Q}{p}}}\right)^{p}=\left(\frac{|g_{2}(x)|^{p-1}}
{|x|^{\frac{Q (p-1)}{p}}}\right)^{\frac{p}{p-1}},
\end{equation}
which satisfies the equality condition in H\"older's inequality.
\end{proof}

Before proving Theorem \ref{Po}, we first show the following proposition.
\begin{prop}\label{Poprop}\cite[Proposition 5.2]{RSY16}
Let $\Omega \subset \mathbb{G}$ be an open set. If $1<p<\infty, \;f\in \widehat{\mathfrak{L}}_{0}^{1,p}(\Omega)$ and $\mathbb{E}f\in L^{p}(\Omega)$, then we have
\begin{equation} \label{Po1}
\|f\|_{L^{p}(\Omega)}\leq \frac{p}{Q}\|\mathbb{E}f\|_{L^{p}(\Omega)}.
\end{equation}
\end{prop}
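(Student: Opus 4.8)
The plan is to prove \eqref{Po1} first for test functions $f\in C_{0}^{\infty}(\Omega\backslash\{0\})$ by the same polar-coordinate integration-by-parts scheme used in the proof of Theorem \ref{L_p_weighted_th} with $\theta=0$, and then to extend to a general $f\in\widehat{\mathfrak{L}}_{0}^{1,p}(\Omega)$ with $\mathbb{E}f\in L^{p}(\Omega)$ by a density argument. In fact, since any $f\in C_{0}^{\infty}(\Omega\backslash\{0\})$, extended by zero, lies in $C_{0}^{\infty}(\mathbb{G}\backslash\{0\})$ and has all its mass supported in $\Omega$, the inequality \eqref{L_p_weighted} with $\theta=0$ applied on $\mathbb{G}$ already restricts to give \eqref{Po1}; but it is instructive to carry out the computation directly on $\Omega$.

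For the first step, using the polar decomposition on the homogeneous group $\mathbb{G}$ and writing $\wp=\{x\in\mathbb{G}:|x|=1\}$, one has $\int_{\Omega}|f(x)|^{p}dx=\int_{0}^{\infty}\int_{\wp}|f(ry)|^{p}r^{Q-1}d\sigma(y)dr$. Since $f$ has compact support away from the origin, all boundary contributions vanish, and integrating by parts in $r$ via $r^{Q-1}=\tfrac{1}{Q}\tfrac{d}{dr}r^{Q}$ gives
\[
\int_{\Omega}|f(x)|^{p}dx=-\frac{p}{Q}\int_{0}^{\infty}r^{Q}\,{\rm Re}\int_{\wp}|f(ry)|^{p-2}f(ry)\overline{\tfrac{df(ry)}{dr}}\,d\sigma(y)dr.
\]
Recognising $r\,\tfrac{df(ry)}{dr}=\mathbb{E}f(ry)$, the right-hand side is bounded above by $\tfrac{p}{Q}\int_{\Omega}|\mathbb{E}f(x)|\,|f(x)|^{p-1}dx$, and Hölder's inequality with exponents $p$ and $p/(p-1)$ yields
\[
\int_{\Omega}|f(x)|^{p}dx\leq\frac{p}{Q}\left(\int_{\Omega}|\mathbb{E}f(x)|^{p}dx\right)^{\frac1p}\left(\int_{\Omega}|f(x)|^{p}dx\right)^{\frac{p-1}{p}},
\]
which is \eqref{Po1} after dividing.

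For the second step, given $f\in\widehat{\mathfrak{L}}_{0}^{1,p}(\Omega)$ with $\mathbb{E}f\in L^{p}(\Omega)$, I would pick a sequence $f_{j}\in C_{0}^{\infty}(\Omega\backslash\{0\})$ with $f_{j}\to f$ in $L^{p}(\Omega)$ and $\mathbb{E}f_{j}\to\mathbb{E}f$ in $L^{p}(\Omega)$, which exists by the very definition of the completion $\widehat{\mathfrak{L}}_{0}^{1,p}(\Omega)$ with norm $\|\cdot\|_{L^{p}(\Omega)}+\|\mathbb{E}\,\cdot\|_{L^{p}(\Omega)}$; applying \eqref{Po1} to each $f_{j}$ and passing to the limit gives the assertion. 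The argument is essentially routine; the only point demanding a little care is ensuring the boundary terms in the radial integration by parts genuinely vanish — immediate for functions supported in $\Omega\backslash\{0\}$, which is precisely why the inequality must be established on that class before taking the completion — and then checking that the density used to define $\widehat{\mathfrak{L}}_{0}^{1,p}(\Omega)$ suffices to transfer the estimate.
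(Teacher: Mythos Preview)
Your argument is correct and follows the same two-step scheme as the paper: first establish \eqref{Po1} for $f\in C_{0}^{\infty}(\Omega\backslash\{0\})$ via the $\theta=0$ case of Theorem \ref{L_p_weighted_th}, then pass to the completion $\widehat{\mathfrak{L}}_{0}^{1,p}(\Omega)$ by density.

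The only notable difference is in the density step. You take an approximating sequence $f_{j}\in C_{0}^{\infty}(\Omega\backslash\{0\})$ with $f_{j}\to f$ and $\mathbb{E}f_{j}\to\mathbb{E}f$ in $L^{p}(\Omega)$, apply \eqref{Po1} to each $f_{j}$, and let $j\to\infty$. The paper instead inserts an auxiliary radial cutoff $\zeta_{\lambda}(x)=\zeta(\lambda|x|)$, applies the inequality to $\zeta_{\lambda}f_{\ell}$, and passes to the double limit $\ell\to\infty$ followed by $\lambda\to0$, using that $\mathbb{E}\zeta_{\lambda}$ is uniformly bounded and supported in a shrinking annulus. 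Given that the completion is taken precisely with respect to the norm $\|\cdot\|_{L^{p}}+\|\mathbb{E}\,\cdot\|_{L^{p}}$, your direct limit is legitimate and the cutoff is not strictly needed; the paper's version is a belt-and-braces argument that would also cover situations where one only knows $f_{\ell}\to f$ in $L^{p}$ but not a priori $\mathbb{E}f_{\ell}\to\mathbb{E}f$.
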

\begin{proof}[Proof of Proposition \ref{Poprop}]
Let us consider the function $\zeta:\mathbb{R}\rightarrow\mathbb{R}$, which is an even and smooth function, satisfying
\begin{itemize}
\item
$0\leq\zeta\leq1,$
\item
$\zeta(r)=1 \;\;{\rm if}\;\; |r|\leq1,$
\item
$\zeta(r)=0 \;\;{\rm if}\;\; |r|\geq2.$
\end{itemize}
We set $\zeta_{\lambda}(x):=\zeta(\lambda|x|)$ for $\lambda>0$.
We have the inequality \eqref{Po1} for $f\in C_{0}^{\infty}(\mathbb{G}\backslash\{0\})$ by \eqref{L_p_weighted} when $\theta=0$. There is $\{f_{\ell}\}_{\ell=1}^{\infty} \in C_{0}^{\infty}(\Omega\backslash\{0\})$ such that $f_{\ell}\rightarrow f$ in $\widehat{\mathfrak{L}}_{0}^{1,p}(\Omega)$ as $\ell\rightarrow\infty$. Let $\lambda>0$. From \eqref{L_p_weighted} when $\theta=0$ one gets
$$\|\zeta_{\lambda}f_{\ell}\|_{L^{p}(\Omega)}\leq\frac{p}{Q}\left(\|(\mathbb{E}\zeta_{\lambda})f_{\ell}\|_{L^{p}(\Omega)}+
\|\zeta_{\lambda}(\mathbb{E}f_{\ell})\|_{L^{p}(\Omega)}\right)$$
for any $\ell\geq1$. Then, we can immediately see that
$$\lim_{\ell\rightarrow\infty}\zeta_{\lambda}f_{\ell}=\zeta_{\lambda}f,$$
$$\lim_{\ell\rightarrow\infty}(\mathbb{E}\zeta_{\lambda})f_{\ell}=(\mathbb{E}\zeta_{\lambda})f,$$
$$\lim_{\ell\rightarrow\infty}\zeta_{\lambda}(\mathbb{E}f_{\ell})=\zeta_{\lambda}(\mathbb{E}f)$$
in $L^{p}(\Omega)$. By these properties we obtain
$$\|\zeta_{\lambda}f\|_{L^{p}(\Omega)}\leq\frac{p}{Q}\left\{\|(\mathbb{E}\zeta_{\lambda})f\|_{L^{p}(\Omega)}+
\|\zeta_{\lambda}(\mathbb{E}f)\|_{L^{p}(\Omega)}\right\}.$$
Since
$$|(\mathbb{E}\zeta_{\lambda})(x)|\leq \begin{cases}
\sup|\mathbb{E}\zeta|, \;\;{\rm if}\;\; \lambda^{-1}<|x|<2\lambda^{-1};\\
0, \;\; {\rm otherwise},
\end{cases}$$
one obtains \eqref{Po1} in the limit as $\lambda\rightarrow 0$.
\end{proof}
\begin{proof}[Proof of Theorem \ref{Po}] Since $R=\underset{x\in \Omega}{\rm sup}|x|$ by Proposition \ref{Poprop} one has
\begin{equation*}\|f\|_{L^{p}(\Omega)}\leq \frac{p}{Q}\|\mathbb{E}f\|_{L^{p}(\Omega)}\\ \leq
\frac{R p}{Q}\|\mathcal{R}f\|_{L^{p}(\Omega)}=\frac{R p}{Q}\left\|\frac{1}{|x|}\mathbb{E}f\right\|_{L^{p}(\Omega)},
\end{equation*}
which implies \eqref{Po01}.
\end{proof}
\begin{proof}[Proof of Theorem \ref{1}] Since for $Q=p\theta_{4}+p-\theta_{2}\theta_{3}$ there is nothing
to prove, let us only consider the case $Q\neq p\theta_{4}+p-\theta_{2}\theta_{3}$. Using polar coordinates $(r,y)=(|x|, \frac{x}{\mid x\mid})\in (0,\infty)\times\wp$ on $\mathbb{G}$, where $\wp$ is the unit quasi-sphere
\begin{equation}\label{EQ:sphere}
\wp:=\{x\in \mathbb{G}:\,|x|=1\},
\end{equation} and by polar decomposition on $\mathbb{G}$ (see, for example, \cite{FS-Hardy} or \cite{FR}) and using the integration by parts, one calculates
\begin{equation}\label{eqp}
\int_{\mathbb{G}}
\frac{(a+b|x|^{\theta_{2}})^{\theta_{3}}}{|x|^{p\theta_{4}+p}}|f(x)|^{p}dx
=\int_{0}^{\infty}\int_{\wp}
\frac{(a+br^{\theta_{2}})^{\theta_{3}}}{r^{p\theta_{4}+p}}|f(ry)|^{p} r^{Q-1}d\sigma(y)dr.
\end{equation}
Since $\theta_{2} \theta_{3}<0$ and $p\theta_{4}-\theta_{2}\theta_{3}<Q-p$ we get
$$
\int_{\mathbb{G}}
\frac{(a+b|x|^{\theta_{2}})^{\theta_{3}}}{|x|^{p\theta_{4}+p}}|f(x)|^{p}dx$$
$$\leq\int_{0}^{\infty}\int_{\wp}
(a+br^{\theta_{2}})^{\theta_{3}}r^{Q-1-p\theta_{4}-p}\left(\frac{b r^{\theta_{2}}}{a+br^{\theta_{2}}}+\frac{a}{a+br^{\theta_{2}}}
\cdot\frac{Q-p\theta_{4}-p}{Q-p\theta_{4}-p+\theta_{2}\theta_{3}}\right)|f(ry)|^{p} d\sigma(y)dr
$$
$$=\int_{0}^{\infty}\int_{\wp}
\frac{(a+br^{\theta_{2}})^{\theta_{3}}r^{Q-1-p\theta_{4}-p}}{Q-p\theta_{4}-p+\theta_{2}\theta_{3}}
\left(\frac{\theta_{2} \theta_{3} b r^{\theta_{2}}}{a+br^{\theta_{2}}}+Q-p\theta_{4}-p\right)|f(ry)|^{p} d\sigma(y)dr
$$
$$=\int_{0}^{\infty}\int_{\wp}
\frac{d}{dr}\left(\frac{(a+br^{\theta_{2}})^{\theta_{3}}
	r^{Q-p\theta_{4}-p}}{Q-p\theta_{4}-p+\theta_{2}\theta_{3}}\right)|f(ry)|^{p}
d\sigma(y)dr$$
$$
=-\frac{p}{Q-p\theta_{4}-p+\theta_{2}\theta_{3}}\int_{0}^{\infty}(a+br^{\theta_{2}})^{\theta_{3}}r^{Q-p\theta_{4}-p}  \,{\rm Re}\int_{\wp}
|f(ry)|^{p-2} f(ry) \overline{\frac{df(ry)}{dr}}d\sigma(y)dr
$$
$$\leq \left|\frac{p}{Q-p\theta_{4}-p+\theta_{2}\theta_{3}}\right|\int_{\mathbb{G}}\frac{(a+b|x|^{\theta_{2}})^{\theta_{3}}|\mathcal{R}f(x)||f(x)|^{p-1}}{|x|^{p\theta_{4}
+p-1}}dx
$$
$$=\frac{p}{Q-p\theta_{4}-p+\theta_{2}\theta_{3}}
\int_{\mathbb{G}}\frac{(a+b|x|^{\theta_{2}})^
{\frac{\theta_{3}(p-1)}{p}}|f(x)|^{p-1}}{|x|^{(\theta_{4}+1)(p-1)}}
\frac{(a+b|x|^{\theta_{2}})^
{\frac{\theta_{3}}{p}}}{|x|^{\theta_{4}}}|\mathcal{R}f(x)|dx.$$
Here the H\"{o}lder's inequality gives
$$
\int_{\mathbb{G}}
\frac{(a+b|x|^{\theta_{2}})^{\theta_{3}}}{|x|^{p\theta_{4}+p}}|f(x)|^{p}dx$$
$$\leq\frac{p}{Q-p\theta_{4}-p+\theta_{2}\theta_{3}}\left(\int_{\mathbb{G}}\frac{(a+b|x|^{\theta_{2}})^{\theta_{3}}}{|x|^{p\theta_{4}
+p}}|f(x)|^{p}dx\right)^\frac{p-1}{p}
\left(\int_{\mathbb{G}}\frac{(a+b|x|^{\theta_{2}})^{\theta_{3}}}{|x|^{p\theta_{4}}}|\mathcal{R}f(x)|^{p}dx\right)^\frac{1}{p},
$$
which implies \eqref{Lpweighted2}.

In order to show the sharpness of the constant, we check the equality condition in above H\"older's inequality.
We consider the function
$$g_{3}(x)=|x|^{C}, $$
where $C\in\mathbb{R}, C\neq 0$ and $Q\neq p\theta_{4}+p-\theta_{2}\theta_{3}$. Then, a direct calculation gives
\begin{equation}\label{Holder_eq2}
\left|\frac{1}{C}\right|^{p}\left(\frac{(a+b|x|^{\theta_{2}})^
{\frac{\theta_{3}}{p}}|\mathcal{R}g_{3}(x)|}{|x|^{\theta_{4}}}\right)^{p}=\left(\frac{(a+b|x|^{\theta_{2}})^
{\frac{\theta_{3}(p-1)}{p}}|g_{3}(x)|^{p-1}}
{|x|^{(\theta_{4}+1) (p-1)}}\right)^{\frac{p}{p-1}},
\end{equation}
which satisfies the equality condition in H\"older's inequality.
This shows the sharpness of the constant $\frac{Q-p\theta_{4}-p+\theta_{2}\theta_{3}}{p}$ in \eqref{Lpweighted2}.
\end{proof}

We are now ready to prove Theorem \ref{twist_thm_1}.
\begin{proof}[Proof of Theorem \ref{twist_thm_1}] Let $\kappa(|z|)\neq0$ be a radial function. By polar coordinates for the $z$-plane $x=r\cos{\phi}$, $y=r\sin{\phi}$, we have
$$\int_{\mathbb{C}}\frac{|\widetilde{\nabla_{\mathcal{L}}}f|^{2}}{\kappa(|z|)}dz=\int_{\mathbb{C}}\left(\left|\left(i\partial_{x}+
y\psi(|z|)\right)f\right|^{2}+\left|\left(i\partial_{y}-
x\psi(|z|)\right)f\right|^{2}\right)\frac{dz}{\kappa(|z|)}$$
$$=\int_{0}^{\infty}\int_{0}^{2\pi}\left(\left|\left(i\cos{\phi}\partial_{r}-\frac{i\sin{\phi}}{r}\partial_{\phi}
+
\psi(r)r\sin{\phi}\right)f\right|^{2}\right)rd\phi \frac{dr}{\kappa(r)}
$$$$
+\int_{0}^{\infty}\int_{0}^{2\pi}\left(\left|\left(i\sin{\phi}\partial_{r}+\frac{i\cos{\phi}}{r}\partial_{\phi}-
\psi(r)r\cos{\phi}\right)f\right|^{2}\right)rd\phi\frac{dr}{\kappa(r)}$$
$$=\int_{0}^{\infty}\int_{0}^{2\pi}\left(|\partial_{r}f|^{2}+\frac{|\partial_{\phi}f|^{2}}{r^{2}}+\psi(r)^{2}r^{2}
|f|^{2}\right)rd\phi \frac{dr}{\kappa(r)}
-2{\rm Re}\int_{0}^{\infty}\int_{0}^{2\pi}i\psi(r)\partial_{\phi}f\cdot \overline{f}rd\phi \frac{dr}{\kappa(r)}.$$
Integrating the second integral in this equality by parts we see that
$$2{\rm Re}\int_{0}^{\infty}\int_{0}^{2\pi}i\psi(r)\partial_{\phi}f\cdot \overline{f}rd\phi \frac{dr}{\kappa(r)}=
2{\rm Re}\int_{0}^{\infty}\int_{0}^{2\pi}i\psi(r)\partial_{\phi}(|f|^{2})rd\phi \frac{dr}{\kappa(r)}=0.$$
Let us represent $f$ via its Fourier series
$$f(r,\phi)=\sum_{k=-\infty}^{\infty}f_{k}(r)e^{ik\phi}.$$
Then we obtain
$$\frac{1}{r^{2}}\int_{0}^{2\pi}|\partial_{\phi}f|^{2}d\phi=\frac{2\pi}{r^{2}}\sum_{k}k^{2}|f_{k}(r)|^{2}=
\frac{2\pi}{r^{2}}\sum_{k\neq0}k^{2}|f_{k}(r)|^{2}$$
$$\geq\frac{2\pi}{r^{2}}\min_{k\neq0}(k^{2})\sum_{k\neq0}|f_{k}(r)|^{2}=\frac{1}{r^{2}}\int_{0}^{2\pi}|f|^{2}d\phi
-\frac{2\pi}{r^{2}}|f_{0}(r)|^{2}=\frac{1}{r^{2}}\int_{0}^{2\pi}(|f|^{2}-|f_{0}(r)|^{2})d\phi,$$
where $f_{0}(|z|)=\frac{1}{2\pi}\int_{0}^{2\pi}f(|z|,\phi)d\phi$.

Thus, we arrive at
\begin{multline}\label{twist3_2}\int_{\mathbb{C}}\frac{|\widetilde{\nabla_{\mathcal{L}}}f|^{2}}{\kappa(|z|)}dz \\
\geq
\int_{\mathbb{C}}\frac{1}{\kappa(|z|)}\left|\frac{d}{d|z|}f\right|^{2}dz+
\int_{\mathbb{C}}\frac{|z|^{2}(\psi(|z|))^{2}}{\kappa(|z|)}|f|^{2}dz+
\int_{\mathbb{C}}\frac{|f|^{2}-|f_{0}(|z|)|^{2}}{|z|^{2}\kappa(|z|)}dz.
\end{multline}
Putting $\kappa(|z|)=|z|^{2\theta_{1}}$, it follows that
\begin{multline}\label{twist3_2_1}\int_{\mathbb{C}}\frac{|\widetilde{\nabla_{\mathcal{L}}}f|^{2}}{|z|^{2\theta_{1}}}dz \\ \geq
\int_{\mathbb{C}}\frac{1}{|z|^{2\theta_{1}}}\left|\frac{d}{d|z|}f\right|^{2}dz+
\int_{\mathbb{C}}\frac{(\psi(|z|))^{2}}{|z|^{2\theta_{1}-2}}|f|^{2}dz+
\int_{\mathbb{C}}\frac{|f|^{2}-|f_{0}(|z|)|^{2}}{|z|^{2\theta_{1}+2}}dz.
\end{multline}
In the abelian case $\mathbb{G}=(\mathbb{R}^{2},+)$, $p=2$ and $\theta=\theta_{1}+1$ with $\theta_{1}\neq0$, \eqref{L_p_weighted} implies
$$\int_{\mathbb{C}}\frac{1}{|z|^{2\theta_{1}}}\left|\frac{d}{d|z|}f\right|^{2}dz\geq \theta_{1}^{2}\int_{\mathbb{C}}\frac{|f|^{2}}{|z|^{2\theta_{1}+2}}dz.$$
Putting this in \eqref{twist3_2_1}, we obtain \eqref{twist3_1}.

Putting $\kappa(|z|)=|\log|z||^{-2}$ in \eqref{twist3_2}, one has
$$\int_{\mathbb{C}}|\log|z||^{2}|\widetilde{\nabla_{\mathcal{L}}}f|^{2}dz\geq
\int_{\mathbb{C}}|\log|z||^{2}\left|\frac{d}{d|z|}f\right|^{2}dz+
\int_{\mathbb{C}}(\psi(|z|))^{2}|z|^{2}|\log|z||^{2}|f|^{2}dz$$
\begin{equation}\label{twist3_3}
+\int_{\mathbb{C}}\frac{|f|^{2}-|f_{0}(|z|)|^{2}}{|z|^{2}}|\log|z||^{2}dz.
\end{equation}
In the abelian case $\mathbb{G}=(\mathbb{R}^{2},+)$ and $p=2$, \eqref{L_p_weighted_log} implies
$$\int_{\mathbb{C}}|\log|z||^{2}\left|\frac{d}{d|z|}f\right|^{2}dz\geq \frac{1}{4}\int_{\mathbb{C}}|f|^{2}dz.$$
Putting this in \eqref{twist3_3}, we obtain \eqref{twist3_1_1}.

Putting $\kappa(|z|)=1$ in \eqref{twist3_2}, we get
$$\int_{\mathbb{C}}|\widetilde{\nabla_{\mathcal{L}}}f|^{2}dz\geq
\int_{\mathbb{C}}\left|\frac{d}{d|z|}f\right|^{2}dz+
\int_{\mathbb{C}}(\psi(|z|))^{2}|z|^{2}|f|^{2}dz$$
\begin{equation}\label{twist3_3_Po}
+\int_{\mathbb{C}}\frac{|f|^{2}-|f_{0}(|z|)|^{2}}{|z|^{2}}dz.
\end{equation}
In the abelian case, when $\Omega$ is a bounded domain in $(\mathbb{R}^{2},+)$, and $p=2$, \eqref{Po01} implies
$$\int_{\Omega}\left|\frac{d}{d|z|}f\right|^{2}dz\geq \frac{1}{R^{2}}\int_{\Omega}|f|^{2}dz.$$
Putting this in \eqref{twist3_3_Po}, we obtain \eqref{twist3_1_1_Po}.

Putting $\kappa(|z|)=\frac{(a+b|z|^{\theta_{2}})^{-\theta_{3}}}{|z|^{-2\theta_{4}}}$ in \eqref{twist3_2}, we have
$$\int_{\mathbb{C}}\frac{(a+b|z|^{\theta_{2}})^{\theta_{3}}}{|z|^{2\theta_{4}}}|\widetilde{\nabla_{\mathcal{L}}}f|^{2}dz\geq
\int_{\mathbb{C}}\frac{(a+b|z|^{\theta_{2}})^{\theta_{3}}}{|z|^{2\theta_{4}}}\left|\frac{d}{d|z|}f\right|^{2}dz+
\int_{\mathbb{C}}\frac{\psi^{2}(|z|)(a+b|z|^{\theta_{2}})^{\theta_{3}}}{|z|^{2\theta_{4}-2}}|f|^{2}dz$$
\begin{equation}\label{twist3_3_sup}
+\int_{\mathbb{C}}\frac{(a+b|z|^{\theta_{2}})^{\theta_{3}}(|f|^{2}-|f_{0}(|z|)|^{2})}{|z|^{2\theta_{4}+2}}dz.
\end{equation}
Again, in the abelian case $\mathbb{G}=(\mathbb{R}^{2},+)$ and $p=2$, \eqref{Lpweighted2} gives
$$\frac{\theta_{2}\theta_{3}-2\theta_{4}}{2}
\int_{\mathbb{C}}\frac{(a+b|z|^{\theta_{2}})^{\theta_{3}}}{|z|^{2\theta_{4}+2}}|f|^{2}dz
\leq\int_{\mathbb{C}}\frac{(a+b|z|^{\theta_{2}})^{\theta_{3}}}{|z|^{2\theta_{4}}}\left|\frac{d}{d|z|}f\right|^{2}dz,$$
for $a,b>0$, $\theta_{2} \theta_{3}<0$ and $2\theta_{4}\leq \theta_{2}\theta_{3}$.
Putting this in \eqref{twist3_3_sup}, we obtain \eqref{twist3_1_1_sup}.
\end{proof}

Now we give some inequalities for real-valued functions to show the best estimates one can expect. While estimates for real-valued functions have less physical meaning than those for complex-valued functions in questions of the spectral theory, they also find their use in applications to the existence of real (or positive) solutions to some nonlinear equations.

\begin{rem}\label{twist_rem}
By a direct calculation we have
$$\int_{\mathbb{C}^{n}}|\nabla_{\mathcal{L}}f|^{2}dz=\sum_{j=1}^{n}\int_{\mathbb{C}^{n}}\left(\left|\left(i\partial_{x_{j}}+
\frac{y_{j}}{2}\right)f\right|^{2}+\left|\left(i\partial_{y_{j}}-
\frac{x_{j}}{2}\right)f\right|^{2}\right)dz$$
\begin{equation}\label{twist8}=\int_{\mathbb{C}^{n}}|\nabla f|^{2}dz+\int_{\mathbb{C}^{n}}\frac{|z|^{2}}{4}|f|^{2}dz.
\end{equation}
Using the well-known Hardy inequality for $n\geq1$ for any $f\in W_{\mathcal{L}}^{1,2}(\mathbb{C}^{n})$, we obtain
\begin{equation}\label{twist3}
\int_{\mathbb{C}^{n}}|\nabla_{\mathcal{L}}f|^{2}dz\geq\left(n-1\right)^{2}\int_{\mathbb{C}^{n}}\frac{|f|^{2}}{|z|^{2}}dz
+\int_{\mathbb{C}^{n}}\frac{|z|^{2}}{4}|f|^{2}dz,
\end{equation}
for all real-valued functions $f\in W_{\mathcal{L}}^{1,2}(\mathbb{C}^{n})$ and $n\geq1$. Here $W_{\mathcal{L}}^{1,2}$ %and $\nabla_{\mathcal{L}}$
is defined in \eqref{twist_def}.

Let $\Omega$ is a bounded domain in $\mathbb{C}$ with $0\in\Omega$ and $R\geq e\cdot \underset{z\in\Omega}{\rm sup}\{|z|\}$. Then, in the case $n=1$ we can use the critical Hardy inequality with sharp constant for $f\in W_{\mathcal{L}}^{1,2}(\Omega)$ (see for example \cite{AR01}, \cite{AS02}):
\begin{equation}\label{twist4}
\int_{\Omega}|\nabla_{\mathcal{L}}f|^{2}dz\geq \frac{1}{4}
\int_{\Omega}\frac{|f|^{2}}{|z|^{2}\left(\log\frac{R}{|z|}\right)^{2}}dz
+\int_{\Omega}\frac{|z|^{2}}{4}|f|^{2}dz,
\end{equation}
for all real-valued functions $f\in W_{\mathcal{L}}^{1,2}(\Omega)$ and $n=1$.  % and \eqref{twist2}, respectively.
Since the constants in the Hardy and critical Hardy inequalities are sharp, then the constants in the obtained inequalities \eqref{twist3} and \eqref{twist4} are sharp.
\end{rem}
\begin{cor}[{\rm Uncertainty type principle}]\label{uncer_twist}
Let $\Omega$ be a bounded domain in $\mathbb{C}$ with $0\in\Omega$ and $R\geq e\cdot \underset{z\in\Omega}{\rm sup}\{|z|\}$. Then we have
\begin{equation}\label{twist5}
\|\nabla_{\mathcal{L}}f\|_{L^{2}(\mathbb{C}^{n})}\|f\|_{L^{2}(\mathbb{C}^{n})}\geq
\int_{\mathbb{C}^{n}}\left(\sqrt{\frac{(n-1)^{2}}{|z|^{2}}+\frac{|z|^{2}}{4}}\right)\left|f\right|^{2}dz
\end{equation}
for $n\geq1$ and all real-valued functions $f\in W_{\mathcal{L}}^{1,2}(\mathbb{C}^{n})$, and
\begin{equation}\label{twist6}
\|\nabla_{\mathcal{L}}f\|_{L^{2}(\Omega)}\|f\|_{L^{2}(\Omega)}\geq
\int_{\Omega}\left(\sqrt{\frac{1}{4|z|^{2}\left(\log\frac{R}{|z|}\right)^{2}}+\frac{|z|^{2}}{4}}\right)\left|f\right|^{2}dz
\end{equation}
for $n=1$ and all real-valued functions $f\in W_{\mathcal{L}}^{1,2}(\Omega)$.
%Here $W_{\mathcal{L}}^{1,2}$ and $\nabla_{\mathcal{L}}$ are defined in \eqref{twist_def} and \eqref{twist2}, respectively.
\end{cor}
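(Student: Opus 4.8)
The plan is to deduce both \eqref{twist5} and \eqref{twist6} from the lower bounds for $\int|\nabla_{\mathcal{L}}f|^{2}$ already recorded in Remark \ref{twist_rem}, namely \eqref{twist3} and \eqref{twist4}, combined with the standard Cauchy--Schwarz manipulation that upgrades a Hardy-type bound $\int|\nabla_{\mathcal{L}}f|^{2}\geq\int V|f|^{2}$ with $V\geq0$ to an uncertainty-type bound $\|\nabla_{\mathcal{L}}f\|_{L^{2}}\|f\|_{L^{2}}\geq\int\sqrt{V}\,|f|^{2}$.

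First I would prove \eqref{twist5}. Set $V(z):=\frac{(n-1)^{2}}{|z|^{2}}+\frac{|z|^{2}}{4}$, which is nonnegative on $\mathbb{C}^{n}\backslash\{0\}$, so that $\sqrt{V}$ is well defined. By \eqref{twist3} one has $\int_{\mathbb{C}^{n}}|\nabla_{\mathcal{L}}f|^{2}dz\geq\int_{\mathbb{C}^{n}}V(z)|f|^{2}dz$ for every real-valued $f\in W_{\mathcal{L}}^{1,2}(\mathbb{C}^{n})$. Multiplying this inequality by $\|f\|_{L^{2}(\mathbb{C}^{n})}^{2}$ gives
\[
\|\nabla_{\mathcal{L}}f\|_{L^{2}(\mathbb{C}^{n})}^{2}\|f\|_{L^{2}(\mathbb{C}^{n})}^{2}\geq\left(\int_{\mathbb{C}^{n}}V(z)|f|^{2}dz\right)\left(\int_{\mathbb{C}^{n}}|f|^{2}dz\right),
\]
while the Cauchy--Schwarz inequality applied to the pair $\sqrt{V}\,|f|$ and $|f|$ gives
\[
\int_{\mathbb{C}^{n}}\sqrt{V(z)}\,|f|^{2}dz\leq\left(\int_{\mathbb{C}^{n}}V(z)|f|^{2}dz\right)^{1/2}\left(\int_{\mathbb{C}^{n}}|f|^{2}dz\right)^{1/2}.
\]
Squaring the last display and comparing it with the previous one yields $\|\nabla_{\mathcal{L}}f\|_{L^{2}}^{2}\|f\|_{L^{2}}^{2}\geq\bigl(\int_{\mathbb{C}^{n}}\sqrt{V(z)}\,|f|^{2}dz\bigr)^{2}$, and taking square roots gives precisely \eqref{twist5}.

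For \eqref{twist6} the argument is verbatim the same with $n=1$, with $\mathbb{C}^{n}$ replaced by the bounded domain $\Omega$, with $V$ replaced by $\widetilde{V}(z):=\frac{1}{4|z|^{2}(\log\frac{R}{|z|})^{2}}+\frac{|z|^{2}}{4}$, and using the critical Hardy bound \eqref{twist4} in place of \eqref{twist3}; the hypotheses $0\in\Omega$ and $R\geq e\cdot\sup_{z\in\Omega}|z|$ ensure $\log\frac{R}{|z|}\geq1$ throughout $\Omega$, so $\widetilde{V}$ is positive and finite there. I do not expect any genuine obstacle in this argument: the only point requiring a moment's care is the finiteness of the integrals that appear, which is already guaranteed by the fact that \eqref{twist3} and \eqref{twist4} hold in the stated function classes $W_{\mathcal{L}}^{1,2}(\mathbb{C}^{n})$ and $W_{\mathcal{L}}^{1,2}(\Omega)$.
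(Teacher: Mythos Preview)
Your proposal is correct and follows essentially the same approach as the paper: take the Hardy-type bounds \eqref{twist3} and \eqref{twist4}, then combine with Cauchy--Schwarz to pass from $\int V|f|^{2}$ to $\int\sqrt{V}\,|f|^{2}$. The paper's proof merely orders the steps slightly differently, first writing $\|\nabla_{\mathcal{L}}f\|_{L^{2}}\geq\|\sqrt{V}\,f\|_{L^{2}}$ and then multiplying by $\|f\|_{L^{2}}$ before applying Cauchy--Schwarz, but this is the same argument.
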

\begin{proof}[Proof of Corollary \ref{uncer_twist}] By \eqref{twist3} we get for $n\geq1$
$$\|\nabla_{\mathcal{L}}f\|_{L^{2}(\mathbb{C}^{n})}\|f\|_{L^{2}(\mathbb{C}^{n})}\geq
\left\|\left(\sqrt{\frac{(n-1)^{2}}{|z|^{2}}+\frac{|z|^{2}}{4}}\right)f\right\|
_{L^{2}(\mathbb{C}^{n})}\|f\|_{L^{2}(\mathbb{C}^{n})}$$
$$\geq \int_{\mathbb{C}^{n}}\left(\sqrt{\frac{(n-1)^{2}}{|z|^{2}}+\frac{|z|^{2}}{4}}\right)\left|f\right|^{2}dz,$$
which gives \eqref{twist5}.
Similarly, using \eqref{twist4} we obtain \eqref{twist6}.
The proof is complete.
\end{proof}
\begin{rem}
In the case $n=1$ of the Remark \ref{twist_rem}, we also can use the another type of the critical Hardy inequality (see for example Solomyak \cite{Solomyak94}) in \eqref{twist8}:
$$\int_{\mathbb{C}}|\nabla_{\mathcal{L}}f|^{2}dz\geq C
\int_{\mathbb{C}}\frac{|f|^{2}}{|z|^{2}(1+\log^{2}|z|)}dz
+\int_{\mathbb{C}}\frac{|z|^{2}}{4}|f|^{2}dz,$$
where $C$ is a positive constant.
\end{rem}
\begin{rem}\label{Grushin_constant_mag} Let $\mathcal{L}_{G}$ be the magnetic Baouendi-Grushin operator on $\mathbb{C}^{n}$ with the constant magnetic field
$$\mathcal{L}_{G}=\sum_{j=1}^{n}((i\partial_{x_{j}}+\psi_{1,j}(y_{j}))^{2}+(i|x|^{\gamma}\partial_{y_{j}}+\psi_{2,j}(x_{j}))^{2}),$$
where $|x|=\sqrt{|x_{1}|^{2}+\ldots |x_{n}|^{2}}$, $\psi_{1,j},\psi_{2,j}\in L_{loc}^{2}(\mathbb{R}\backslash\{0\})$. Setting $\widehat{X}_{j}=i\partial_{x_{j}}+\psi_{1,j}(y_{j})$ and $\widehat{Y}_{j}=i|x|^{\gamma}\partial_{y_{j}}+\psi_{2,j}(x_{j})$, we write
$$\nabla_{G\mathcal{L}}f=(\widehat{X}_{1}f, \ldots, \widehat{X}_{n}f, \widehat{Y}_{1}f, \ldots, \widehat{Y}_{n}f).$$
Then, a direct calculation gives for all real-valued functions $f\in C_{0}^{\infty}(\mathbb{R}^{2n}\backslash\{0\})$
$$\int_{\mathbb{C}^{n}}\rho^{\alpha_{1}}|\nabla_{\gamma}\rho|^{\alpha_{2}}
|\nabla_{G\mathcal{L}}f|^{2}dz$$
$$=\sum_{j=1}^{n}\int_{\mathbb{C}^{n}}\rho^{\alpha_{1}}|\nabla_{\gamma}\rho|^{\alpha_{2}}
\left(\left|\left(i\partial_{x_{j}}+
\psi_{1,j}(y_{j})\right)f\right|^{2}+\left|\left(i|x|^{\gamma}\partial_{y_{j}}+
\psi_{2,j}(x_{j})\right)f\right|^{2}\right)dz$$
$$=\int_{\mathbb{C}^{n}}\rho^{\alpha_{1}}|\nabla_{\gamma}\rho|^{\alpha_{2}}|\nabla_{\gamma}f|^{2}dz+
\sum_{j=1}^{n}\int_{\mathbb{C}^{n}}\rho^{\alpha_{1}}|\nabla_{\gamma}\rho|^{\alpha_{2}}(|\psi_{2,j}(x_{j})|^{2}
+|\psi_{1,j}(y_{j})|^{2})|f|^{2}dz.$$
Then putting $m=k=n$ in \eqref{Hardy3}, and hence $Q=n(2+\gamma)$, we obtain the following Hardy inequality for magnetic Baouendi-Grushin operator on $\mathbb{C}^{n}$ with the constant magnetic field and for any real-valued function $f\in C_{0}^{\infty}(\mathbb{R}^{2n}\backslash\{0\})$ with sharp constant
$$\int_{\mathbb{C}^{n}}\rho^{\alpha_{1}}|\nabla_{\gamma}\rho|^{\alpha_{2}}
|\nabla_{G\mathcal{L}}f|^{2}dz$$
$$\geq\left(\frac{n(2+\gamma)+\alpha_{1}-2}{2}\right)\int_{\mathbb{C}^{n}}\rho^{\alpha_{1}}|\nabla_{\gamma}\rho|^{\alpha_{2}}
\frac{|x|^{2\gamma}}{\rho^{2\gamma+2}}|f|^{2}dz$$$$+
\sum_{j=1}^{n}\int_{\mathbb{C}^{n}}\rho^{\alpha_{1}}|\nabla_{\gamma}\rho|^{\alpha_{2}}
(|\psi_{2,j}(x_{j})|^{2}+|\psi_{1,j}(y_{j})|^{2})|f|^{2}dz,$$
where $n(2+\gamma)+\alpha_{1}-2>0$ and $n+\alpha_{2}\gamma>0$.
\end{rem}


\begin{thebibliography}{HOHOLT08}

\bibitem[AL11]{AL11}
L.~Aermark and A.~Laptev.
Hardy inequalities for a magnetic Grushin operator with Aharonov-Bohm type magnetic field.
{\em  Algebra i Analiz}, 23(2):203--208, 2011.

\bibitem[Amb04a]{A1}
L.~D'Ambrosio.
Some Hardy inequalities on the Heisenberg group.
{\em Differ. Equ.}, 40(4):552--564, 2004.

\bibitem[Amb04b]{A2}
L.~D'Ambrosio.
Hardy inequalities related to Grushin type operators.
{\em Proc. Amer. Math. Soc.}, 132(3):725--734, 2004.

\bibitem[AR01]{AR01}
N. C.~Adimurthi and M.~Ramaswamy.
An improved Hardy-Sobolev inequality and its application.
{\em Proc. Amer. Math. Soc.}, 130(2):489--505, 2001.

\bibitem[ARS17]{ARS17}
Adimurthi, P. K.~Ratnakumar and V. K.~Sohani.
A Hardy-Sobolev inequality for the twisted Laplacian.
{\em Proc. Roy. Soc. Edinburgh Sect. A}, 147:1--23, 2017.

\bibitem[AS02]{AS02}
Adimurthi and K.~Sandeep.
Existence and non-existence of the first eigenvalue of the perturbed Hardy-Sobolev operator.
{\em Proc. Roy. Soc. Edinburgh Sect. A}, 132:1021--1043, 2002.

\bibitem[DGQN10]{DGN}
J.~Dou, Q.~Guo, Qianqiao and P.~Niu.
Hardy inequalities with remainder terms for the generalized Baouendi-Grushin vector fields.
{\em Math. Inequal. Appl.}, 13(3):555--570, 2010.

\bibitem[FR16]{FR}
V.~Fischer and M.~Ruzhansky.
\newblock {\em Quantization on nilpotent Lie groups}, volume 314 of {\em
  Progress in Mathematics}.
\newblock Birkh\"auser, 2016.

\bibitem[FS82]{FS-Hardy}
G.~B. Folland and E.~M. Stein.
\newblock {\em Hardy spaces on homogeneous groups}, volume~28 of {\em
  Mathematical Notes}.
\newblock Princeton University Press, Princeton, N.J.; University of Tokyo
  Press, Tokyo, 1982.

\bibitem[Gar93]{G}
N.~Garofalo.
Unique continuation for a class of elliptic operators which degenerate on a manifold of arbitrary codimension.
{\em J. Differental Equations}, 104(1):117--146, 1993.

\bibitem[GL90]{GL}
N.~Garofallo and E.~Lanconelli.
Frequency functions on Heisenberg group, the uncertainty principle and unique continuation.
{\em Ann. Inst. Fourier}, 40(2):313--356, 1990.

\bibitem[GM11]{GM11}
N.~Ghoussoub and A.~Moradifam.
\newblock Bessel pairs and optimal Hardy and Hardy-Rellich
inequalities.
\newblock {\em Math. Ann.}, 349(1):1--57, 2011.

\bibitem[IIO15]{IIO:Lp-Hardy}
N.~Ioku, M.~Ishiwata, and T.~Ozawa.
\newblock Hardy type inequalities in $L^p$ with sharp remainders.
{\em J. Inequal. Appl.} 2017, 2017:5, 7 pp.

\bibitem[Kom06]{K}
I.~Kombe.
Hardy, Rellich and uncertainty principle inequalities on Carnot groups.
\newblock {\em arXiv: math/0611850v1}, 2006.

\bibitem[Kom15]{Kombe15}
I.~Kombe.
\newblock Hardy and Rellich type inequalities with remainders for Baouendi-Grushin vector fields.
\newblock {\em Houston J. Math.}, 41(3):849--874, 2015.


\bibitem[LW99]{LW}
A.~Laptev and T.~Weidl.
Hardy inequalities for magnetic Dirichlet forms.
{\em in Operator theory: Adv. and Appl.}, 108:299--305, 1999.


\bibitem[MOW15a]{MOW:Hardy-Hayashi}
S.~Machihara, T.~Ozawa, and H.~Wadade.
\newblock On the {H}ardy type inequalities.
\newblock {\em preprint}, 2015.

\bibitem[NCH04]{NCH}
P.~Niu, Y.~Chen and Y.~Han.
Some Hardy-type inequalities for the generalised Baouendi-Grushin operators.
{\em Glasgow Math. J.}, 46:515--527, 2004.

\bibitem[ORS16]{ORS16}
T.~Ozawa, M.~Ruzhansky and D.~Suragan.
\newblock $L^{p}$-Caffarelli-Kohn-Nirenberg type inequalities on homogeneous groups.
\newblock {\em arXiv:1605.02520}, 2016.

\bibitem[RS16a]{Ruzhansky-Suragan:L2-CKN}
M.~Ruzhansky and D.~Suragan.
\newblock Anisotropic $L^{2}$-weighted Hardy and $L^{2}$-Caffarelli-Kohn-Nirenberg inequalities.
\newblock {\em Commun. Contemp. Math.}, 2016. http://dx.doi.org/10.1142/S0219199717500146

\bibitem[RS16c]{Ruzhansky-Suragan:squares}
M.~Ruzhansky and D.~Suragan.
\newblock Local {H}ardy and Rellich inequalities for sums of squares of vector fields.
\newblock {\em Adv. Diff. Equations}, 22: 505-540, 2017.
arXiv:1605.06389

\bibitem[RS17]{Ruzhansky-Suragan:JDE}
M.~Ruzhansky and D.~Suragan.
On horizontal Hardy, Rellich, Caffarelli-Kohn-Nirenberg and $p$-sub-Laplacian inequalities on stratified groups.
{\em J. Differential Equations}, 262:1799--1821, 2017.

\bibitem[RS17a]{Ruzhansky-Suragan:Layers}
M.~Ruzhansky and D.~Suragan.
\newblock Layer potentials, {K}ac's problem, and refined {H}ardy inequality on
homogeneous {C}arnot groups.
\newblock {\em Adv. Math.}, 308:483--528, 2017.

\bibitem[RSY16]{RSY16}
M.~Ruzhansky, D.~Suragan and N.~Yessirkegenov.
\newblock Sobolev inequalities, Euler-Hilbert-Sobolev and Sobolev-Lorentz-Zygmund spaces on homogeneous groups.
\newblock {\em 	 arXiv:1610.03379v2}, 2016.

\bibitem[RSY17]{RSY17}
M.~Ruzhansky, D.~Suragan and N.~Yessirkegenov.
\newblock Extended Caffarelli-Kohn-Nirenberg inequalities, and remainders, stability, and superweights for $L^p$-weighted Hardy inequalities.
\newblock {\em 	arXiv:1701.01280v2}, 2017.

\bibitem[SJ12]{ShY12}
Sh.~Shen and Y.~Jin.
Rellich type inequalities related to Grushin type operator and Greiner type operator.
{\em Appl. Math. J. Chinese Univ. Ser. B}, 27(3):353--362, 2012.

\bibitem[Sol94]{Solomyak94}
M.~Z.~Solomyak.
A remark on the Hardy inequalities.
{\em Integral Equations Operator Theory}, 19:120--124, 1994.

\end{thebibliography}
\end{document}